\numberwithin{equation}{subsection} 
\crefname{equation}{}{} % redefine cleveref eqn label
\crefname{subsection}{}{} % redefine cleveref subsection label
\crefname{section}{\S\kern -.5 ex}{\S\S\kern -1 ex}
\crefname{enumi}{}{} \creflabelformat{enumi}{#2(#1)#3}
\theoremstyle{plain}
\newtheorem{corollary}[subsection]{Corollary} 
\newtheorem{lemma}[subsection]{Lemma}
\newtheorem{proposition}[subsection]{Proposition}
\newtheorem{theorem}[subsection]{Theorem} 
\theoremstyle{definition}%
\newcommand\CD{{\mathcal D}}
\newcommand\CH{{\mathscr H}}
\newcommand\CT{{\mathcal T}}
\newcommand\CX{{\mathcal X}}
\newcommand\SA{{\mathscr A}}
\newcommand\SB{{\mathscr B}}
\newcommand\SE{{\mathscr E}}
\newcommand\SF{{\mathscr F}}
\newcommand\SG{{\mathscr G}}
\newcommand\BBC{{\mathbb C}}
\newcommand\BBQ{{\mathbb Q}}
\newcommand\Cent{\operatorname{Cent}}
\newcommand\codim{\operatorname{codim}}
\newcommand\Fix{{\operatorname{Fix}}}
\newcommand\Hom{{\operatorname{Hom}}}
\newcommand\Ind{{\operatorname{Ind}}}
\newcommand\GL{\operatorname{GL}}
\newcommand\inverse{^{-1}}
\renewcommand\th{{^{\text{th}}}}
\newcommand\id{{id}}
\newcommand\Cbar{\overline C}
\newcommand\Gtilde{{\widetilde G}}
\newcommand\htilde{{\widetilde h}}
\newcommand\Htilde{{\widetilde H}}
\newcommand\Hbar{\overline H}
\newcommand\SAbar{\overline {\SA}}
\newcommand\sigmabar{\overline {\sigma}}
\newcommand\Ttilde{{\widetilde T}}
\newcommand\Zbar{\overline Z}
\newcommand\cx{\operatorname{cx}}
\newcommand\Av{\operatorname{Av}}
\newcommand\spn{\operatorname{span}}
\newcommand\cd{\operatorname{cd}}
\newcommand\rk{\operatorname{rk}}
\newcommand\rt{\operatorname{rt}}
\newcommand\mbar{\overline m}
\newcommand\tdi{\operatorname{tdi}}
\newcommand\sigmarbar{\overline {\sigma^r}}
\thanks{The authors would like to thank their charming wives for their
  unwavering support during the preparation of this paper.}
\subjclass[2010]{Primary 20F55, 14N20, 32S22, 52C35}
\keywords{Complex reflection groups, reflection cosets, Orlik-Solomon
  algebras, arrangements of hyperplanes, cohomology of complex hyperplane
  complements}
\title [Invariants] {Invariants and semi-invariants in the cohomology of the
  complement of a reflection arrangement}
\author[J.M. Douglass]{J. Matthew Douglass}
\address{Division of Mathematical Sciences,
National Science Foundation,
2415 Eisenhower Ave,
Alexandria, VA 22314, USA}
\email{mdouglas@nsf.gov}
\author[G. Pfeiffer]{G\"otz Pfeiffer} \address{School of Mathematical and Statistical Sciences, University of Galway,
  Galway, Ireland}
\email{goetz.pfeiffer@universityofgalway.ie}
\author[G. R\"ohrle]{Gerhard R\"ohrle} \address {Fakult\"at f\"ur
  Mathematik, Ruhr-Universit\"at Bochum, D-44780 Bochum, Germany}
\email{gerhard.roehrle@rub.de}
\begin{document}

\begin{abstract}
  Suppose $V$ is a finite dimensional, complex vector space, $\SA$ is a finite
  set of codimension one subspaces of $V$, and $G$ is a finite subgroup of the
  general linear group $\GL(V)$ that permutes the hyperplanes in $\SA$. In
  this paper we study invariants and semi-invariants in the graded $\BBQ
  G$-module $H^*(M(\SA))$, where $M(\SA)$ denotes the complement in $V$ of the
  hyperplanes in $\SA$ and $H^*(\,\cdot\,)$ denotes rational singular
  cohomology, in the case when $\SA$ is a reflection arrangement and the pair
  $(\SA,G)$ arises from a reflection coset. The main result is the
  construction of an explicit, natural (from the point of view of Coxeter
  groups) basis of the space of invariants, $H^*(M(\SA))^G$. In addition to
  leading to a proof of the description of the space of invariants conjectured
  by Felder and Veselov for Coxeter groups that does not rely on computer
  calculations, this construction provides an extension of this description of
  the space of invariants to arbitrary finite, complex reflection groups. The
  main result also leads to simplifications of some cohomology computations of
  Lehrer, Callegaro-Marin, and Marin.
\end{abstract}

\maketitle

\tableofcontents

\allowdisplaybreaks

%%%%%%%%%%%%%%%%%%%%%%%%%%%%%%%%%%%%%%%%%%%%%%%%%%%%%%%%%%%%%%%%%%%%%%
%%%%%%%%%%%%% article body...
%%%%%%%%%%%%%%%%%%%%%%%%%%%%%%%%%%%%%%%%%%%%%%%%%%%%%%%%%%%%%%%%%%%%%%

%%%%%%%%%%%%%%%%%%%%%%%%%%%%%%%%%%%%%%%%%%%%%%%%%%%%%%%%%%%%%%%%%%%%%%
%%%%%%%%%%%%% \S1 Introduction
%%%%%%%%%%%%%%%%%%%%%%%%%%%%%%%%%%%%%%%%%%%%%%%%%%%%%%%%%%%%%%%%%%%%%%
\section{Introduction}

%%%%%%%%%%%%%%%%%%%%%%%%%%%%%%%%%%%%%%%%%%%%%%%%%%%%%%%%%%%%%%%%%%%%%%
\subsection{}\label{ssec:1.1}

Let $V$ be a finite dimensional, complex vector space. An
\emph{arrangement-group pair with underlying vector space $V$} is a pair
$(\SA, G)$, where $\SA$ is a central hyperplane arrangement in $V$, or more
simply an ``arrangement,'' that is, $\SA$ is a finite set of codimension one
subspaces of $V$, and $G$ is a finite subgroup of the general linear group
$\GL(V)$ that permutes the hyperplanes in $\SA$. The ``complement of $\SA$''
is the open submanifold
\[
  M(\SA) =V\setminus \bigcup_{H\in \SA} H
\]
of $V$. Clearly, $M(\SA)$ is a $G$-stable submanifold of $V$. Let
\[
  H^*(M(\SA)) = \bigoplus_{k\geq 0} H^k(M(\SA))
\]
denote the rational, singular cohomology of $M(\SA)$.

The rule $g\mapsto (g\inverse)^*$ endows $H^*(M(\SA))$ with the structure of a
graded $\BBQ G$-algebra and the space of invariants, $H^*(M(\SA))^G$, can be
compactly encoded in the \emph{Poincar\'e polynomial}
\[
  P(\SA,G; t)= \sum_{k\geq 0} \dim \big(H^k(M(\SA))^G \big)\, t^k.
\]

In this paper we study invariants and semi-invariants in graded $\BBQ
G$-modules $H^*(M(\SA))$ that arise in the context of complex reflection
groups, and more generally, reflection cosets.

The main results are (1) the construction of an explicit, natural (from the
point of view of Coxeter groups) basis of $H^*(M(\SA))^G$, where $(\SA, G)$
arises as described below from a reflection coset, and (2) a short new proof
that determinant-like characters do not occur in $H^*(M(\SA(G)))$ for a
complex reflection group $G$.

%%%%%%%%%%%%%%%%%%%%%%%%%%%%%%%%%%%%%%%%%%%%%%%%%%%%%%%%%%%%%%%%%%%%%%
\subsection{}\label{ssec:1.1a}

In more detail, recall that a non-identity linear transformation with finite
order in $\GL(V)$ is a \emph{reflection} if it fixes a hyperplane in $V$
pointwise, a finite subgroup of $\GL(V)$ is a \emph{(complex or unitary)
  reflection group} if it is generated by reflections, and a \emph{reflection
  coset} is a finite subset, $C\subseteq \GL(V)$, such that (1) $CC\inverse =
\{\, gh\inverse\mid g, h\in C\,\}$ is a reflection subgroup of $\GL(V)$ that
is normal in $\langle C \rangle$, the subgroup generated by $C$, and (2) the
quotient $\langle C \rangle/CC\inverse$ is a finite cyclic group. In
particular, $\langle C\rangle$ is a finite subgroup of $\GL(V)$, which may or
may not be a reflection group.

Technically arrangements, reflection groups, and reflection cosets are pairs,
namely $(\SA,V)$, $(G,V)$, and $(C, V)$, respectively. In practice the ambient
vector space is omitted when it is clear from context. Notice that the
identity linear transformation is not a reflection but that the trivial
subgroup of $\GL(V)$ is a reflection (sub)group.

For a finite subset $S\subseteq \GL(V)$, define $\SA(S)$ to be the (possibly
empty) arrangement of hyperplanes in $V$ that are fixed point sets of the 
reflections in $S$:
\[
  \SA(S)=\{\, \Fix(r)\mid \text{$r\in S$ is a reflection}\,\},
\]
where $\Fix(r)=\Fix_V(r)$ is the set of fixed points of $r$ in $V$. If $S=G$
is a group, then $\SA(G)$ is a \emph{reflection arrangement.} Obviously
$\SA(G)$ is determined by the subgroup generated by all reflections in $G$.

%%%%%%%%%%%%%%%%%%%%%%%%%%%%%%%%%%%%%%%%%%%%%%%%%%%%%%%%%%%%%%%%%%%%%%
\subsection{Reflection pairs}\label{ssec:1rp}

Let $C\subseteq \GL(V)$ be a reflection coset. The groups $G_0=CC\inverse$ and
$G_1=\langle C\rangle$ both act on the reflection arrangements
$\SA_0=\SA(G_0)$ and $\SA_1=\SA(G_1)$. In this way, $C$ gives rise to four
arrangement-group pairs as in \cref{ssec:1.1}, namely
\[
  (\SA(G_0), G_0),\quad (\SA(G_0), G_1),\quad (\SA(G_1),
  G_0),\quad\text{and}\quad (\SA(G_1), G_1).
\]

Notice that if $z\in \GL(V)$ is a scalar transformation with finite order,
then $Cz$ is also a reflection coset. Set $G_0'= (Cz)(Cz)\inverse$ and
$G_1'=\langle Cz \rangle$. Then clearly $G_0'= G_0$ and $G_1' = G_1 \langle
z\rangle$. If $\SA$ is any arrangement in $V$, then $z$ acts trivially on
$\SA$ and it is easy to check (see \cref{ssec:osp2}) that $z$ acts trivially
on $H^*(M(\SA))$ as well. Thus, with the obvious notational convention,
\begin{align*}
  H^*(M(\SA_0'))^{G_0'} &= H^*(M(\SA_0))^{G_0},%
  & H^*(M(\SA_0'))^{G_1'} &= H^*(M(\SA_0))^{G_1},\\
  H^*(M(\SA_1'))^{G_0'} &= H^*(M(\SA_1'))^{G_0},%
  & H^*(M(\SA_1'))^{G_1'} &= H^*(M(\SA_1'))^{G_1}.
\end{align*}
In particular, replacing $C$ by $Cz$ only has the affect of introducing one
new arrangement, namely $\SA(\langle Cz\rangle)$, but the groups of linear
transformations of cohomology spaces do not change.

For the purposes of this paper we define a \emph{reflection pair} to be an
arrangement-group pair, $(\SA,G)$, with the property that there is a
reflection coset, say $C$, such that with the notation above,
\[
  H^*(M(\SA))^G= H^*(M(\SA_i))^{G_j}
\]
for some $i,j\in \{0,1\}$. In this case, we say that the reflection pair
$(\SA,G)$ \emph{arises from $C$.} Obviously a given reflection pair can arise
from many reflection cosets.

Notice that if $(\SA, G)$ is a reflection pair with underlying vector space
$V$, then there is a reflection subgroup, $\Gtilde\subseteq \GL(V)$, such that
$G$ normalizes $\Gtilde$ and $\SA=\SA(\Gtilde)$. In particular, although $G$
might not be a reflection group, $\SA$ is a reflection arrangement.

%%%%%%%%%%%%%%%%%%%%%%%%%%%%%%%%%%%%%%%%%%%%%%%%%%%%%%%%%%%%%%%%%%%%%%
\subsection{}

A reflection coset, $C\subseteq \GL(V)$, is called \emph{irreducible} if
$CC\inverse$ acts irreducibly on $V$. Otherwise $C$ is \emph{reducible.}
Obviously if $C$ is irreducible, then $\langle C\rangle$ also acts irreducibly
on $V$. A reflection pair $(\SA, G)$ is called \emph{irreducible} if it arises
from an irreducible reflection coset.

Irreducible reflection cosets have been classified, up to multiplication by a
scalar transformation, by Brou\'e, Malle, and Michel \cite
{brouemallemichel:spetsesI}. This classification is recalled in
\cref{ssec:rc}. As indicated above, the classification of irreducible
reflection pairs is somewhat more involved. This classification is given in
\cref{cor:irp} and \cref{ssec:firp}.

%%%%%%%%%%%%%%%%%%%%%%%%%%%%%%%%%%%%%%%%%%%%%%%%%%%%%%%%%%%%%%%%%%%%%%
\subsection{}

With these preliminaries in hand, we can more precisely formulate the main
results in the paper. First, the Poincar\'e polynomials for all reflection
pairs can be determined from \cref{thm:3} (see \cref{cor:1}). Next, explicit
bases of $H^k(M(\SA))^G$, when $(\SA, G)$ is an irreducible reflection pair,
are constructed in \cref{thm:4}. Finally, an immediate consequence of
\cref{thm:5} is that certain linear characters do not occur in
$H^*(M(\SA(G)))$ for any complex reflection group $G$.

%%%%%%%%%%%%%%%%%%%%%%%%%%%%%%%%%%%%%%%%%%%%%%%%%%%%%%%%%%%%%%%%%%%%%% 
\subsection{Poincar\'e polynomials and prior work}

The computation of the Poincar\'e polynomials, $P(\SA(G),G;t)$, for most of
the irreducible reflection pairs can be extracted from work of Lehrer
\cite{lehrer:rationalpoints}, Callegaro, and Marin
\cite{callegaromarin:homology}, \cite{marin:homology}. When $\SA$ is the
arrangement of an imprimitive complex reflection group the argument in
\cite{lehrer:rationalpoints} relies on reduction to positive characteristic
and the Grothendieck trace formula. When $\SA$ is the arrangement of a
primitive complex reflection group one needs to use the deep result that the
orbifold $M(\SA) /G$ is a $K(\pi,1)$-space. The approach used here, which is
an extension of the computations for Coxeter groups in
\cite{brieskorn:tresses}, is more elementary, less computationally intensive
(for primitive groups), and yields a more refined description of the
invariants $H^*(M(\SA(G)))^G$. The arguments do not rely on any machine
computations, although case-by-case calculations are still necessary.

The main results in this paper (see \cref{cor:1}, \cref{cor:2}, and
\cref{thm:4}) suggest the existence of some, as yet unidentified, geometric or
algebraic phenomenon that would provide a conceptual explanation of the
structure of the cohomology ring $H^*(M(\SA(G)))^G$.

%%%%%%%%%%%%%%%%%%%%%%%%%%%%%%%%%%%%%%%%%%%%%%%%%%%%%%%%%%%%%%%%%%%%%%
\subsection{The Felder-Veselov construction}

The explicit bases of the spaces of invariants constructed in \cref{sec:basis}
also lead to a simple proof of a conjecture for Coxeter groups made by Felder
and Veselov \cite{felderveselov:coxeter}, as well as a natural extension of
the resulting theorem to all reflection pairs (see \cref{thm:4}).

In more detail, consider the special case when $G\subseteq \GL(V)$ is a
reflection group that contains a Coxeter system. The (ungraded) character of
$G$ on $H^*(M(\SA(G)))$ has been computed by various authors. Felder and
Veselov characterize the conjugacy classes in the support of this character as
the conjugacy classes of so-called special involutions. Special involutions
can be classified for each Coxeter type and it turns out that the number of
such classes is equal to the dimension of the invariant subspace
$H^*(M(\SA))^G$. Suppose $g\in G$ and let $Z=Z_G\big(\Fix(g) \big)$ be the
pointwise stabilizer of the space $\Fix(g)$. By a theorem of Steinberg, $Z$ is
a reflection subgroup of $\GL(V)$, and hence a Coxeter group. Let $c$ be a
Coxeter element in $Z$. Then $c$ determines a cohomology class $\zeta_c\in
H^*(M(\SA(G)))$ in a natural way (see \cite{felderveselov:coxeter} or
\cref{sec:basis}). Obviously the average of $\zeta_c$ over $G$, say $\Av_G
(\zeta_c)$, lies in $H^*(M(\SA(G)))^G$, and it is easy to see that $\Av_G
(\zeta_c)$ does not depend on the choice of the Coxeter element $c$. Felder
and Veselov conjectured that the rule $g\mapsto \Av_G(\zeta_c)$ defines a
bijection between the set of conjugacy classes of special involutions and a
basis of $H^*(M(\SA(G)))^G$.

Felder and Veselov proved their conjecture in many cases. The remaining cases
were settled in \cite{douglasspfeifferroehrle:invariants}. Computer
calculations were used to verify the conjecture for the exceptional Coxeter
groups. The approach used in this paper to compute $H^*(M(\SA))^G$, when
$(\SA, G)$ is a reflection pair, leads to a proof of the Felder-Veselov
conjecture that does not rely on computer calculations. The key observation is
that when $G$ is a Coxeter group, the parabolic subgroups that arise in
\cref{thm:4} are precisely the subgroups that arise as pointwise stabilizers
of fixed points of special involutions. The proof of \cref{thm:4} is a
case-by-case computation using the classification of complex reflection
groups.

%%%%%%%%%%%%%%%%%%%%%%%%%%%%%%%%%%%%%%%%%%%%%%%%%%%%%%%%%%%%%%%%%%%%%%
\subsection{}

Brou\'e, Malle, and Rouquier \cite{brouemallerouquier:complex} define specific
generating sets for non-Coxeter complex reflection groups that have some of
the properties of Coxeter generating sets of a Coxeter group. In this paper,
these generating sets are used to construct bases of $H^*(M(\SA(G)))^G$ that
for Coxeter groups agree with the Felder-Veselov bases. For well-generated
complex reflection groups that are not Coxeter groups, the construction of the
basis of $H^*(M(\SA(G)))^G$ is the same as for Coxeter groups. For groups that
are not well-generated, the situation is more complicated, but still
manageable.

Computations for non-Coxeter complex reflection groups show that the number of
conjugacy classes in the support of the character of $G$ on $H^*(M(\SA(G)))$
can be much larger than the dimension of $H^*(M(\SA(G)))^G$. It would be
interesting to find a concise formula for the character of $G$ on
$H^*(M(\SA(G)))$ in the spirit of \cite {felderveselov:coxeter}, as well as to
identify a suitable replacement of the notion of special involutions that
would be valid for all complex reflection groups.

%%%%%%%%%%%%%%%%%%%%%%%%%%%%%%%%%%%%%%%%%%%%%%%%%%%%%%%%%%%%%%%%%%%%%%
\subsection{}
The rest of this paper is organized as follows. In the next section we recall
the general results about hyperplane arrangements and reflection arrangements
needed in this paper. In \cref{sec:red} we describe the reduction of the
computation of $H^*(M(\SA))^G$, when $(\SA,G)$ is a reflection pair, to the
case of irreducible reflection pairs. The classification of irreducible
reflection pairs is given in \cref{sec:cla}. \cref{thm:3}, which gives a
decomposition of the spaces of invariants $H^*(M(\SA))^G$ for irreducible
reflection pairs with rank greater than two, is stated in \cref{sec:thm3} and
proved in \cref{sec:prf}. In \cref{sec:basis} explicit bases of the invariants
$H^*(M(\SA))^G$ are described for each irreducible reflection pair with rank
greater than two. In \cref{sec:leh} we show how the bases constructed in
\cref{sec:basis} lead to a refinement of a theorem proved by Lehrer \cite
{lehrer:rationalpoints}. In \cref{sec:semi} we give a short alternative proof
of another result of Lehrer \cite {lehrer:vanishing} that irreducible
characters which do not vanish on subgroups generated by generating
reflections of $G$ do not occur in $H^*(M(\SA(G)))$ when $G$ is a complex
reflection group. The appendix contains computations for rank two reflection
pairs.

%%%%%%%%%%%%%%%%%%%%%%%%%%%%%%%%%%%%%%%%%%%%%%%%%%%%%%%%%%%%%%%%%%%%%% 
\subsection{Conventions and notation}

Basic references are Bourbaki \cite{bourbaki:groupes} and
Lehrer-Taylor \cite{lehrertaylor:unitary} for the theory of complex reflection
groups and the book of Orlik and Terao \cite{orlikterao:arrangements} for the
theory of arrangements.

For the imprimitive reflection groups we need notation for roots of
unity. Define
\[
  \text{$\mu_r$ to be the group of $r\th$ roots of unity}\quad \text{and}\quad
  \omega_r = e^{2\pi \sqrt{-1}/r},
\]
when $r$ is a positive integer.

If $G\subseteq \GL(V)$ is a complex reflection group, we may choose a
$G$-invariant unitary form on $V$. Normally we assume that such a form has
been chosen. Thus, if $X$ is a subspace of $V$, then $X^{\perp}$ denotes the
orthogonal complement of $X$.

Except in \cref{sec:semi}, where $\BBQ$ is replaced by $\BBC$, if $X$ is a
topological space, then $H^*(X)$ denotes the (total) singular cohomology of
$X$ with coefficients in $\BBQ$. If a group $G$ acts on $X$ by homeomorphisms,
we frequently denote the induced action on cohomology, $h\mapsto
(g\inverse)^*(h)$ for $g\in G$ and $h\in H^*(X)$, by $g\cdot h$, or simply
$gh$. The meaning should always be clear by context.

Suppose $\SA$ is an arrangement in a finite dimensional complex vector space
$V$. For a subspace $X\subseteq V$ define
\[
  \cd X= \codim _VX=\dim X^{\perp} \quad\text{and}\quad \SA_X =\{\, H\in
  \SA\mid X\subseteq H \,\},
\]
so $\SA_X$ is an arrangement in $V$. The \emph{center of $\SA$} is $\Cent(\SA)=
\bigcap_{H\in \SA} H$ and $\SA$ is \emph{essential} if $\Cent(\SA)=0$. The
rank of $\SA$, denoted by $\rk \SA$, is defined by
\[
  \rk \SA= \cd\Cent(\SA)=\dim \Cent(\SA)^{\perp}.
\]

The lattice of $\SA$, denoted by $L(\SA)$, is the set of subspaces of $V$ of
the form $H_1\cap \dotsm \cap H_k$, where $\{H_1, \dots, H_k\}$ is a subset of
$\SA$. Notice that if $X\in L(\SA)$, then $\rk \SA_X= \cd X$ and
$\Cent(\SA_X)=X$. For a non-negative integer $k$ set
\[
  L(\SA)_k = \{\, X\in L(\SA) \mid \cd X=k\,\}.
\]

For a subgroup $G\subseteq \GL(V)$ and a subset $X\subset V$ define
\begin{align*}
  Z_G(X)%
  &=\{\, g\in G\mid X\subseteq \Fix_V(g)\,\}, \text{\ the pointwise
    stabilizer of $X$ in $G$, and} \\
  N_G(X)%
  &=\{\, g\in G \mid g X =X\,\}, \text{\ the setwise stabilizer of $X$ in $G$.}
\end{align*}
If in addition $G$ is a reflection subgroup, then the \emph{rank of $G$} is
defined to be the rank of $\SA(G)$, so
\[
  \rk G= \cd \Cent(\SA(G)).
\]

%%%%%%%%%%%%%%%%%%%%%%%%%%%%%%%%%%%%%%%%%%%%%%%%%%%%%%%%%%%%%%%%%%%%%% 
\subsection{Reduction to essential arrangements}\label{ssec:ess}
Suppose $(\SA,G)$ is an arrangement-group pair with underlying vector space
$V$. Let $V_1$ be a subspace of $V$ contained in $\Cent(\SA)$, set
$V_0=V/V_1$, and let $p\colon V\to V_0$ be the projection. Then
\begin{itemize}
\item $p(\SA)=\{\,p(H) \mid H\in \SA\}$ is an arrangement in $V_0$ with
  $L(p(\SA)) = p(L(\SA))$, which is essential if
  $V_1=\Cent(\SA)$,
\item $p$ induces a bijection $\SA\to p(\SA)$ and a lattice isomorphism
  $L(\SA)\to L(p(\SA))$,
\item $M(p(\SA)) = p(M(\SA))$, and
\item $p^*\colon H^*(p(M(\SA))) \xrightarrow{\ \cong\ } H^*(M(\SA))$ is an
  isomorphism.
\end{itemize}

Clearly, $\Cent(\SA)$ is a $G$-stable subspace. Suppose that $V_1$ is also
$G$-stable. Abusing notation slightly, let $p(G)$ denote the image of $G$ in
$\GL(V_0)$. Then
\begin{itemize}
\item $(p(\SA), p(G))$ is an essential arrangement-group pair with underlying
  vector space $V_0$,
\item the isomorphism $p^*$ intertwines the action of $p(G)$ on
  $H^*(M(p(\SA)))$ with the action of $G$ on $H^*(M(\SA))$, and
\item $p^*$ induces an isomorphism of vector spaces $H^*(M(p(\SA))) ^{p(G)}
  \cong H^*(M(\SA))^G$.
\end{itemize}

%%%%%%%%%%%%%%%%%%%%%%%%%%%%%%%%%%%%%%%%%%%%%%%%%%%%%%%%%%%%%%%%%%%%%% 
%%%%%%%%%%%%% \S 
%%%%%%%%%%%%%%%%%%%%%%%%%%%%%%%%%%%%%%%%%%%%%%%%%%%%%%%%%%%%%%%%%%%%%%
\section{Preliminaries} \label{sec:setup}

In this section we fix notation and review some background results for future
reference. Unless otherwise indicated, $(\SA,G)$ is an arrangement-group pair
with underlying vector space $V$.

%%%%%%%%%%%%%%%%%%%%%%%%%%%%%%%%%%%%%%%%%%%%%%%%%%%%%%%%%%%%%%%%%%%%%% 
\subsection{The Orlik-Solomon presentation of
  $\bm{H^*(M(\SA))$}} \label{ssec:osp1}

The presentation of $H^*(M(\SA))$, given by Orlik and Solomon
\cite{orliksolomon:combinatorics} for any arrangement $\SA$, underlies the
computations below. For $H\in \SA$ choose a linear form $\alpha_H$ in the dual
space, $V^*$, of $V$ such that $H=\ker \alpha_H$, and define $h$ to be the
cohomology class of the $1$-form $(1/2\pi i) (d\alpha_H/\alpha_H) =
\alpha_H^*((1/2\pi i) (dz/z))$ in the de Rahm cohomology group
$H^1_{dr}(M(\SA))$.

We use the convention that hyperplanes are denoted using upper case $H$,
possibly with decorations, and the corresponding Orlik-Solomon generators of
$H^*(M(\SA))$ are denoted using lower case $h$, with the same decorations. For
example, for any two hyperplanes $H_1$ and $H_2$ in $\SA$, $h_1h_2$ denotes
the product of the Orlik-Solomon generators $(1/2\pi i)
(d\alpha_{H_1}/\alpha_{H_1})$ and $(1/2\pi i) (d\alpha_{H_2}/\alpha_{H_2})$ in
$H^1_{dr}(M(\SA))$. With this convention, the $\BBQ$-subalgebra of $H^*_{dr}
(M(\SA))$ generated by $\{1\}$ and $\{\, h\mid H\in \SA\,\}$ is isomorphic as
a graded algebra to $H^*(M(\SA))$, and
\begin{equation}
  \label{eq:osrel}
  \sum_{i=1}^m (-1)^i h_1 \dotsm \widehat{h_i} \dotsm h_m =0
  \qquad\text{$(\widehat{h_i}$ is deleted)}
\end{equation}
whenever $\{ \alpha_{H_1}, \dots, \alpha_{H_m} \}$ is a linearly dependent
subset of $V^*$. Condition \cref{eq:osrel}, together with the
anti-commutativity condition $h_1h_2 = -h_2h_1$ for $H_1, H_2\in \SA$, is the
Orlik-Solomon presentation of $H^*(M(\SA))$.

%%%%%%%%%%%%%%%%%%%%%%%%%%%%%%%%%%%%%%%%%%%%%%%%%%%%%%%%%%%%%%%%%%%%%% 
\subsection{} \label{ssec:brid}
It follows easily from \cref{ssec:osp1}\cref{eq:osrel} that $H^{k}(M(\SA))$ is
spanned by the $k$-fold products $h_1 \dotsm h_k$ where $\{ \alpha_{H_1},
\dots, \alpha_{H_k} \}$ is a linearly independent subset of $V^*$. In
particular, if $\cd X =k$, then using Brieskorn's Lemma below we identify $H^{\cd X}
( M(\SA_{X}))$ with the subspace of $H^k(M(\SA))$ spanned by all products $h_1
\dotsm h_k$, where $H_1, \dots, H_k\in \SA$ and $H_1\cap \dotsm \cap H_k =X$.

%%%%%%%%%%%%%%%%%%%%%%%%%%%%%%%%%%%%%%%%%%%%%%%%%%%%%%%%%%%%%%%%%%%%%% 
\subsection{} \label{ssec:osp2}

If $(\SA,G)$ is an arrangement-group pair, then $G$ acts as graded algebra
automorphisms on $H^*(M(\SA))$. Thus, if $g\in G$ and $H_1, \dots, H_k\in
\SA$, then
\begin{equation}
  \label{eq:7}
  g\cdot h_1 \dotsm h_k= (g\cdot h_1) \dotsm (g\cdot h_k).
  %= a_{gH_1} \dotsm a_{gH_k} . 
\end{equation}
If $r\in G$ is a reflection and $H_r=\Fix(r) \in \SA$, then
\begin{equation}
  \label{eq:2}
  gH_r= H_{grg\inverse}\in \SA\quad \text{and}\quad g \cdot h_r =
  h_{grg\inverse} \in H^1(M(\SA)).
\end{equation}
It follows easily that $H^1(M(\SA))$ affords the permutation representation
arising from the action of $G$ on $\SA$.

The equality \cref{eq:2} is used without comment in the computations below.

%%%%%%%%%%%%%%%%%%%%%%%%%%%%%%%%%%%%%%%%%%%%%%%%%%%%%%%%%%%%%%%%%%%%%%
\subsection{Brieskorn's Lemma, group actions, and
  semi-invariants}\label{ssec:br}

The main general result regarding the $\BBQ G$-module structure of the
cohomology ring $H^*(M(\SA))$ used in this paper is an equivariant version of
Brieskorn's Lemma that describes each subspace $H^k(M(\SA))$ as a sum of
induced modules due to Lehrer and Solomon.

Let $\CX(\SA,G)$ be a fixed set of orbit representatives of the action of $G$ on $L(\SA)$ and set
$\CX(\SA,G)_k= \CX(\SA,G) \cap L(\SA)_k$.

Let $\xi\colon G\to \BBQ^{\times}=\GL(\BBQ^1)$ be a rational, linear character
of $G$ and let
\[
  e_\xi = \frac 1 {|G|} \sum_{g\in G}\xi(g\inverse) g
\]
be the centrally primitive idempotent in $\BBQ G$ corresponding to $\xi$. For
a $\BBQ G$-module $M$, set
\[
  M^\xi = e_\xi M = \{\, m\in M \mid \forall g\in G,\,g m = \xi(g) m \,\}.
\]
In the special case when $\xi=1_G$ is the trivial representation,
$M^{1_G}=M^G$ as usual. To minimize subscripts, set $e_G=e_{1_G}$. If $H$ is a
subgroup of $G$, then $\xi|_H$ is a linear character of $H$ with centrally
primitive idempotent $e_{\xi|_H}$ in $\BBQ H$. Note that $e_\xi e_{\xi|_H} =
e_\xi$.

In the next proposition, for $Y\in L(\SA)$ we identify $H^{k} ( M(\SA_{Y}))$
with a subspace of $H^k(M(\SA))$ as in \cref{ssec:brid}.

\begin{proposition}\label{pro:bries}
  Suppose $k\geq0$ and set $\CX_k= \CX(\SA,G)_k$.
  \begin{enumerate}
  \item The inclusions $M(\SA) \subseteq M(\SA_X)$ induce isomorphisms of
    $\BBQ G$-modules, \label{it:bries1}
    \begin{equation*} \label{eq:HK} H^k(M(\SA))\cong \bigoplus_{X\in L(\SA)_k}
      H^k(M(\SA_X)) \cong \bigoplus_{X\in \CX_k} \Ind_{N_G(X)}^G \big(H^{k}(
      M(\SA_{X})) \big).
    \end{equation*}
  \item If $X\in L(\SA)$, then
    multiplication by $e_\xi$ defines an isomorphism \label{it:bries2}
    \[
      e_{\xi_X} H^{k}(M(\SA_{X} ))= H^{k}(M(\SA_{X} ))^{\xi_X} \xrightarrow[\
      \cong\ ]{\ e_\xi\cdot(\,)\ } \Big( \bigoplus_{Y\in G X} H^{k}(M(\SA_{Y}
      )) \Big)^\xi , 
    \]
    where $GX$ denotes the $G$-orbit of $X$. Summing over $X\in \CX_k$, the
    first isomorphism in \cref{eq:HK} restricts to the equality
    \[
      H^k(M(\SA))^\xi =\sum_{X\in \CX_k} e_\xi \cdot H^{k} ( M(\SA_{X}))
      ^{\xi_X} ,
    \]
    where the sum on the right-hand side is an internal direct sum.
  \item Suppose that $\SA= \SA(\Gtilde)$ is a reflection arrangement, where
    $\Gtilde\subseteq \GL(V)$ is a reflection group that is normalized by
    $G$. If $X\in L(\SA)$, then $\SA_X= \SA(Z_{\Gtilde}(X))$, and
    so \label{it:bries3}
    \begin{equation*} \label{eq:tk1}
      H^k(M(\SA))^G \cong \bigoplus_{X\in \CX(\SA, G)_k} H^{k} (
      M(\SA(Z_{\Gtilde}(X)))) ^{N_G(X)}. 
    \end{equation*}
  \end{enumerate}
\end{proposition}

\begin{proof}
  The first isomorphism in \cref{it:bries1} is due to Brieskorn
  \cite{brieskorn:tresses}. The second is stated and proved by Lehrer and
  Solomon \cite{lehrersolomon:symmetric}.

  It is straightforward to check that the projection mapping from
  $\bigoplus_{Y\in G X} H^{k}(M(\SA_{Y} ))$ to $H^{k}(M(\SA_{X}))$ restricts
  to give a left inverse to the multiplication mapping from $H^{k}(M(\SA_{X}
  ))^{\xi_X}$ to $e_\xi\cdot \big(\bigoplus_{Y\in G X} H^{k}(M(\SA_{Y}
  ))\big)$. The assertions in \cref{it:bries2} then follow from
  \cref{it:bries1} and Frobenius reciprocity.

  With the hypotheses of \cref{it:bries3}, if $X\in L(\SA)$, it follows from a
  theorem of Steinberg \cite{steinberg:differential} that $Z_{\Gtilde}(X)$ is
  generated by the reflections it contains, which are precisely the
  reflections that fix the hyperplanes in $\SA_X$ pointwise. Thus,
  $Z_{\Gtilde}(X)$ is a reflection group with $\SA(Z_{\Gtilde}(X))=\SA_X$ and
  $\Cent( \SA(Z_{\Gtilde}(X))) =X$. In particular, $\rk Z_{\Gtilde}(X)= \cd
  X$.
\end{proof}

Notice that the proposition holds when the base field, $\BBQ$, is replace by
$\BBC$ throughout.

%%%%%%%%%%%%%%%%%%%%%%%%%%%%%%%%%%%%%%%%%%%%%%%%%%%%%%%%%%%%%%%%%%%%%%
\subsection{}\label{ssec:tdi}

Some special cases for small and large $k$ are important in the sequel.
\begin{itemize}
\item In degree $0$, $L(\SA)_0= \{V\}= \CX(\SA,G)_0$. It is easy to see that
  $G$ acts trivially on $H^0(V) \cong H^0(M(\SA))$. Thus $H^0(M(\SA))^G$ is
  one-dimensional and $H^0(M(\SA))^\xi=0$ for $\xi\ne 1_G$.

\item In degree $1$, $L(\SA)_1=\SA$ and $\CX(\SA,G)_1$ is a set of orbit
  representatives for the action of $G$ on $\SA$. It was observed in
  \cref{ssec:osp2} that $H^1(M(\SA))$ is simply the permutation module arising
  from the action of $G$ on $\SA$ and that if $H\in \CX(\SA,G)_1$, then $H^{1}
  ( M(\SA_{H})) ^{N_G(H)}$ is one-dimensional with basis the $N_G(H)$-average
  of the Orlik-Solomon generator $h$.

\item At the other extreme, if $k>\rk \SA$, then $H^k(M(\SA))=0$, and thus
  $H^k(M(\SA))^G=0$.

\item The ``top'' degree is $\rk \SA$, in which case $L(\SA)_{\rk \SA} =
  \{\Cent(\SA)\} =\CX_{\rk \SA}$. Clearly, $\SA_{\Cent(\SA)}= \SA$ and
  $N_G(\Cent(\SA))=G$, so Brieskorn's Lemma does not provide any insight into
  the $\BBQ G$-module structure of $H^{\rk \SA}(M(\SA))$, or $\dim H^{\rk
    \SA}(M(\SA))^G$. However, together with \cref{ssec:cpp}\cref{eq:cn}, the
  isomorphism in \cref{pro:bries}\cref{it:bries3} provides a powerful
  inductive, or recursive, tool to compute $H^{\rk \SA}(M(\SA))^G$ when
  $(\SA,G)$ is an irreducible reflection pair.
\end{itemize}

We say that $(\SA,G)$ \emph{has top degree invariants} if $H^{\rk
  \SA}(M(\SA))^G \ne 0$. Roughly speaking, when $(\SA,G)$ is an irreducible
reflection pair, \cref{pro:bries}\cref{it:bries3} reduces the computation of
$H^k(M(\SA))^G$ to those $X$ in $\CX(\SA,G)_k$ such that the pair $(\SA_X,
Z_{\Gtilde}(X))$ has top degree invariants.

%%%%%%%%%%%%%%%%%%%%%%%%%%%%%%%%%%%%%%%%%%%%%%%%%%%%%%%%%%%%%%%%%%%%%% 
\subsection{Contraction with the Euler vector field}\label{ssec:cont}

The last general construction we need is the acyclic complex from
\cite[\S3.1]{orlikterao:arrangements} that arises from contracting (classes
of) differential forms in $H^*(M(\SA))$ with the Euler vector
field. Precisely, there is a graded derivation of $H^*(M(\SA))$ with degree
$-1$ that maps each Orlik-Solomon generator $h\in H^1(M(\SA))$ to $1\in
H^0(M(\SA))$: 
\[
  \partial\colon H^*(M(\SA))\to H^*(M(\SA)) \quad\text{by} \quad
  \partial(h_1\dotsm h_k) = \sum_{i=1}^k (-1)^{i-1} h_1\dotsm \widehat {h_i}
  \dotsm h_k,
\]
for hyperplanes $H_1, \dots, H_k\in \SA$. It is shown in \cite[Lem.~3.13]
{orlikterao:arrangements} that $(H^*(M(\SA)), \partial)$ is an acyclic chain
complex and it follows from \cref{ssec:osp2}\cref{eq:7} that $\partial$ is
$G$-equivariant. In particular,
\begin{equation*}
  \label{eq:4}
  \partial\colon H^{\rk \SA}(M(\SA)) \to H^{\rk \SA -1}(M(\SA))
\end{equation*}
is injective and $G$-equivariant.

Define
\[
  \mu\colon H^*(M(\SA))\to H^*(M(\SA)) \quad\text{by } \quad \mu(x)=
  \big(|\SA|\inverse \sum_{H\in \SA} h\big) x,
\]
so $\mu$ is left multiplication by the class $|\SA|\inverse \sum_{H\in \SA}
h\in H^1(M(\SA))$. It is straightforward to check that $\mu$ is a graded,
$G$-equivariant, linear transformation with degree $1$ and that $\mu \partial
+\partial \mu = \id$.

For $0\leq k \leq \rk(\SA)$ set
\[
  \Hbar^k= \partial\big( H^{k+1}(M(\SA)) \big).
\]
Then the restriction of $\mu$ to $\Hbar^{k}$ is a $\BBQ G$-module isomorphism
onto its image, with inverse given by the restriction of $\partial$ to $\mu
\big( \Hbar^{k} \big)$. Moreover there is a canonical direct sum decomposition
of $\BBQ G$-modules,
\begin{equation}
  \label{eq:pmu}
  H^k(M(\SA)) \cong \mu\big( \Hbar^{k-1}\big) \oplus \Hbar^k,
\end{equation}
where by convention we take $H^{-1}(M(\SA))=0$.

%%%%%%%%%%%%%%%%%%%%%%%%%%%%%%%%%%%%%%%%%%%%%%%%%%%%%%%%%%%%%%%%%%%%%% 
\subsection{} \label{ssec:cpp}

Two immediate consequences of \cref{ssec:cont}\cref{eq:pmu} are
\begin{itemize}
\item if $Y$ is an irreducible $\BBQ G$-module, then $Y$ occurs in the graded
  module $H^*(M(\SA))$ in pairs of degrees that differ by $1$, and
\item if $\xi$ is a linear character of $G$, then $\sum_{k\geq 0} (-1)^k \dim
  H^k(M(\SA))^\xi =0$. \label{pp:it2}
\end{itemize}
It follows from the second assertion and \cref{pro:bries}\cref{it:bries2} that
\begin{equation}
  \label{eq:cn}
  \sum_{X\in \CX\setminus \{\Cent(\SA)\}} (-1)^{\cd X} \dim
  H^{\cd X}( M(\SA_X))^{\xi_X} + (-1)^{\rk \SA} \dim H^{\rk
    \SA}(M(\SA))^\xi=0 .  
\end{equation}
Thus, if $\dim H^{\cd X}( M(\SA_X))^{\xi_X}$ has been computed for $X$
different from $\Cent(\SA)$, then $\dim H^{\rk \SA}(M(\SA))^\xi$ is determined
by \cref{eq:cn}.

Together with Brieskorn's Lemma and \cref{thm:3}, the map $\partial$ can be
used to construct bases of $H^*(M(\SA))^G$ for irreducible reflection
pairs. The case when $\rk \SA=2$ is completed in the next proposition.

%%%%%%%%%%%%%%%%%%%%%%%%%%%%%%%%%%%%%%%%%%%%%%%%%%%%%%%%%%%%%%%%%%%%%% 
\begin{proposition}\label{pro:b2}
  Suppose that $(\SA,G)$ is an arrangement-group pair with $\rk \SA=2$.
  \begin{enumerate}
  \item If $G$ acts on $\SA$ with $a$ orbits, then the Poincar\'e polynomial
    of $(\SA,G)$ is
    \[
      P(\SA,G; t) = 1+ at+(a-1)t^2.
    \]
  \item If $\{H_1, \dots, H_a\}$ is a set of orbit representatives for the
    action of $G$ on $\SA$, then
    \[
      \{1\} \amalg \{e_G\cdot h_1 , \dots , e_G\cdot h_a\} \amalg \{ e_G\cdot
      h_1h_2, \dots, e_G\cdot h_1h_a\}
    \]
    is a basis of $H^*(M(\SA))^G$.
  \end{enumerate}
\end{proposition}

\begin{proof}
  For $k=0$, we've noted above that $\dim H^0(M(\SA))^G=1$ and so $\{1\}$ is a
  basis.

  If $k=1$, then it follows from \cref{ssec:osp2}\cref{eq:7} that $\dim
  H^1(M(\SA))^G=a$ and $\{e_G\cdot h_1, \dots, e_G\cdot h_a\}$ is a basis of
  $H^1(M(\SA))^G$.

  Suppose $k=2$. By \cref{ssec:cpp}\cref{eq:cn}, $\dim H^2(M(\SA))^G = a-1$,
  so $ P(\SA,G; t) = 1+ at+(a-1)t^2$. To complete the proof, it suffices to
  show that $\{e_G\cdot h_1h_2, \dots, e_G\cdot h_1h_a\}$ is linearly
  independent. Because $\partial \colon H^2(M(\SA))\to H^1(M(\SA))$ is
  injective and $G$-equivariant, it is enough to show that the span of
  $\{\partial(e_G\cdot h_1h_2), \dots, \partial(e_G\cdot h_1h_a)\}$ has
  dimension $a-1$. But this last assertion is clear because $\partial(e_G\cdot
  h_1h_i) = e_G\cdot h_i -e_G\cdot h_1$ for $2\leq i\leq a$.
\end{proof}

%%%%%%%%%%%%%%%%%%%%%%%%%%%%%%%%%%%%%%%%%%%%%%%%%%%%%%%%%%%%%%%%%%%%%% 
%%%%%%%%%%%%% \S 
%%%%%%%%%%%%%%%%%%%%%%%%%%%%%%%%%%%%%%%%%%%%%%%%%%%%%%%%%%%%%%%%%%%%%%
\section{Reduction to irreducible reflection pairs} \label{sec:red}

In this section $C\subseteq \GL(V)$ is a reflection coset, $\sigma\in C$ is a
fixed coset representative, $G_0= CC\inverse$, and $G_1= \langle C\rangle=
\langle G_0\sigma\rangle$. We sketch how the computation of
$H^*(M(\SA_i))^{G_j}$ for $i,j\in\{0,1\}$ can be reduced to the case when $C$
is irreducible.

%%%%%%%%%%%%%%%%%%%%%%%%%%%%%%%%%%%%%%%%%%%%%%%%%%%%%%%%%%%%%%%%%%%%%%
\subsection{$\bm{G_1}$ acts reducibly}\label{ssec:red1}

First, suppose that $G_1$ does not act irreducibly on $V$. A standard argument
(see \cite[\S3E] {brouemallemichel:spetsesI}) shows that if $V'$ and $V''$ are
complementary, $G_1$-stable, proper subspaces of $V$, then $G_0$ is the
internal product of $G_0'= Z_{G_0}(V'')$ and $G_0''= Z_{G_0}(V')$ and $C$ is
isomorphic to the product reflection coset $C' \times C''$, where
\[
  C'= \big(Z_{G_0}(V'') \sigma\big) |_{V'}\subseteq
  \GL(V')\quad\text{and}\quad C''= \big(Z_{G_0}(V') \sigma
  \big)|_{V''}\subseteq \GL(V'').
\]
It follows from the $G_1$-invariance of the decomposition $V=V'+V''$ and the
K\"unneth theorem that
\[
  H^*(M(\SA_i))^{G_j} \cong H^*(M(\SA_i'))^{G_j'} \otimes
  H^*(M(\SA_i''))^{G_j''}
\]
for $i,j\in \{0,1\}$. This reduces the computation of $H^*(M(\SA_i))^{G_j}$ to
the case when $G_1$ acts irreducibly on $V$.

%%%%%%%%%%%%%%%%%%%%%%%%%%%%%%%%%%%%%%%%%%%%%%%%%%%%%%%%%%%%%%%%%%%%%%
\subsection{$\bm{G_1}$ acts irreducibly}

Now suppose that $G_1$ acts irreducibly on $V$. Fix an irreducible
$G_0$-invariant subspace $V_0\subseteq V$. A standard argument (see the proof
of \cite[Prop.~6.9] {blairlehrer:cohomology}) shows that there is a minimal
positive integer, say $r$, such that $V= V_0+ \sigma V_0+\dots +\sigma^{r-1}
V_0$. Since $\sigma$ permutes the subspaces $\sigma^jV_0$ cyclically and $G_0$
acts completely reducibly on $V$, it follows from the minimality of $r$ that
$V$ is the internal direct sum of $V_0$, $\sigma V_0$, \dots,
$\sigma^{r-1}V_0$, and that $\sigma^r V_0=V_0$.

Define $Z_0= Z_{G_0}\big(\sigma V_0+\dots +\sigma^{r-1} V_0 \big)$. Then $Z_0$
is generated by the reflections in $G_0$ that fix $\sigma V_0+\dots
+\sigma^{r-1} V_0$ pointwise, $\sigma^r$ normalizes $Z_0$, and $G_0$ is the
internal direct product of the reflection subgroups $Z_0$, $\sigma
Z_0\sigma\inverse$, \dots, $\sigma^{r-1} Z_0 \sigma^{1-r}$. More precisely,
the map
\[
  m\colon Z_0^r \to G_0 \quad\text{defined by}\quad m(z_1, \dots, z_r)= z_1(\sigma
  z_2\sigma\inverse) \dotsm (\sigma^{r-1} z_r \sigma^{1-r})
\]
is a group isomorphism. Moreover, the restriction mapping from $Z_0$ to
$Z_0|_{V_0}$ is an isomorphism and the action of $G_0$ on $V_0$ factors
through $Z_0$, so $Z_0$ acts irreducibly on $V_0$ and $(Z_0, V_0)$ is an
irreducible reflection group. 

%%%%%%%%%%%%%%%%%%%%%%%%%%%%%%%%%%%%%%%%%%%%%%%%%%%%%%%%%%%%%%%%%%%%%%
\begin{lemma}\label{lem:ags}
  With the preceding notation,
  \begin{equation}
    \label{eq:1ags}
    \SA(Z_0) = \{\, H\in \SA(G_0)\mid V_0\not\subseteq H\,\},
  \end{equation}
  \begin{equation}
    \label{eq:2ags}
    \SA_0 = \SA( Z_0) \amalg \sigma \big(\SA(Z_0) \big) \amalg \dotsm \amalg
    \sigma^{r-1} \big(\SA(Z_0) \big),
  \end{equation}
  and 
  \begin{equation}
    \label{eq:3ags}
    \SA_1 = \SA(\langle Z_0\sigma^{r} \rangle) \amalg \sigma \big(\SA(\langle
    Z_0\sigma^{r} \rangle) \big) \amalg \dotsm \amalg \sigma^{r-1}
    \big(\SA(\langle Z_0\sigma^{r} \rangle) \big),
  \end{equation}
  unless $r=2$ and $\dim V_0=1$.
  If $r=2$ and $\dim V_0=1$, then up to isomorphism, $G_1$ is equal to
  $G(q,1,2)$ and $G_0 \cong \mu_d^2$ is the group of diagonal matrices in
  $G(d,1,2)$, where $d$ is a divisor of $q$.
\end{lemma}

\begin{proof}
  First, if $H\in \SA(G_0)$, then either $\sum_{j=1}^{r-1}
  \sigma^jV_0\subseteq H$ or $V_0\subseteq H$, and not both. The equality in
  \cref{eq:1ags} follows immediately. The equality \cref{eq:2ags} follows from
  the decompositions $V=V_0+\sigma V_0+\dotsm+ \sigma^{r-1} V_0$ and $G_0= Z_0
  \cdot(\sigma Z_0\sigma\inverse) \dotsm(\sigma^{r-1} Z_0 \sigma^{1-r})$ using
  a similar argument. The equality \cref{eq:3ags} follows from analyzing the
  condition $\cd \Fix_V(g \sigma^{m})=1$ for $g\in G_0$ and $m>0$. One shows
  that (1) if $g\sigma^{rk}$ is a reflection, then
  \[
    \SA(G_0\sigma^{rk})= \SA(Z_0 \sigma^{rk}) \amalg \SA(\sigma Z_0
    \sigma\inverse \sigma^{rk}) \amalg \dotsm \amalg \SA(\sigma^{r-1} Z_0
    \sigma^{1-r}\sigma^{rk})
  \]
  and (2) if $m$ is not divisible by $r$, then $\SA(G_0 \sigma^{m})=
  \emptyset$ unless $r=2$ and $\dim V_0=1$. Finally, when $r=2$ and $\dim
  V_0=1$, the final assertion is easily verified.
\end{proof}

%%%%%%%%%%%%%%%%%%%%%%%%%%%%%%%%%%%%%%%%%%%%%%%%%%%%%%%%%%%%%%%%%%%%%%
\subsection{}
Define
\[
  \Zbar_0= Z_0|_{V_0}, \quad \sigmarbar=\sigma^r|_{V_0},\quad \text{and}\quad
  \Cbar=\Zbar_0 \sigmarbar.
\]
Then $(\Zbar_0, V_0)$ is an irreducible reflection group, $\SA(\Zbar_0)= \{\,
V_0\cap H\mid H\in \SA(G_0),\ V_0\nsubseteq H\,\}$, and $\Cbar$ is an
irreducible reflection coset in $\GL(V_0)$. Obviously $\Cbar\, \Cbar\inverse =
\Zbar_0$. Define
\[
  Z_1= \langle Z_0 \sigma^r \rangle\quad\text{and}\quad \Zbar_1= \langle \Cbar
  \rangle= \langle \Zbar_0 \sigmarbar \rangle.
\]
To simplify the notation somewhat, set
\[
  \SAbar_i= \SA(\Zbar_i) \quad\text{for $i=0,1$.}
\]
We want to compute the space of invariants $H^*(M(\SA_i))^{G_j}$, for $i,j\in
\{0,1\}$, in terms of the spaces $H^*(M(\SAbar_i))^{\Zbar_j}$.

%%%%%%%%%%%%%%%%%%%%%%%%%%%%%%%%%%%%%%%%%%%%%%%%%%%%%%%%%%%%%%%%%%%%%%
\subsection{}
Next, define
\[
  f\colon V_0^r\to V \quad\text{by}\quad f(v_1, v_2, \dots, v_r)= v_1+\sigma
  v_2+ \dots +\sigma^{r-1}v_r
\]
and
\[
  \widetilde\sigma\colon V_0^r\to V_0^r \quad\text{by}\quad
  \widetilde\sigma(v_1, \dots, v_r)= (\sigmarbar v_r, v_1, \dots,v_{r-1}),
\]
and let
\[
  \mbar\colon \Zbar_0^{\,r}\to G_0\quad\text{be the composition}\quad
  \Zbar_0^{\,r}\xrightarrow[\ \cong\ ]{} Z_0^r\xrightarrow[\ \cong\ ]{m} G_0,
\]
where the first isomorphism is the inverse of the $r$-fold product of the
restriction isomorphism $Z_0\xrightarrow{\ \cong\ } \Zbar_0$.
  
%%%%%%%%%%%%%%%%%%%%%%%%%%%%%%%%%%%%%%%%%%%%%%%%%%%%%%%%%%%%%%%%%%%%%%
\begin{lemma}\label{lem:frho}
  \begin{enumerate}
  \item The maps $f$ and $\widetilde\sigma$ are vector space
    isomorphisms. \label{it:frho1}
  \item $f\circ \widetilde\sigma = \sigma \circ f$. \label{it:frho2}
  \item Via the isomorphism $\mbar$, \label{it:frho3}
    \begin{enumerate}
    \item $f$ intertwines the product action of $\Zbar_0^{\,r}$ on $V_0^r$
      with the action of $G_0$ on $V$ and
    \item $f$ intertwines the action of $\langle \Zbar_0^{\,r}
      \widetilde\sigma \rangle$ on $V_0^r$ with the action of $G_1$ on $V$.
    \end{enumerate}
  \item $f$ induces isomorphisms of arrangements $(\SAbar_i^{\,r} , V_0^r)
    \cong (\SA_i, V)$ for $i=0,1$. \label{it:frho4}
  \item $f$ restricts to homeomorphisms $M(\SAbar_i)^r \cong M(\SA_i)$ for
    $i=0,1$. \label{it:frho5}
  \end{enumerate}
\end{lemma}

\begin{proof}
  Statements \cref{it:frho1}, \cref{it:frho2}, and \cref{it:frho3} are direct
  computations, \cref{it:frho4} follows from \cref{lem:ags}, and
  \cref{it:frho5} follows from \cref{it:frho4}.
\end{proof}

%%%%%%%%%%%%%%%%%%%%%%%%%%%%%%%%%%%%%%%%%%%%%%%%%%%%%%%%%%%%%%%%%%%%%%
\subsection{}
Using \cref{lem:frho} we may identify the groups $G_0\equiv \Zbar_0^{\,r}$ and
$G_1\equiv \langle \Cbar \rangle= \Zbar_1$, the reflection cosets $C\subseteq
\GL(V)$ and $\Cbar_0^{\,r} \subseteq \GL(V_0^r)$, and the reflection pairs
\begin{align*}
  \big(\SA_0, G_0  \big)%
  &\equiv \big(\SAbar_0^{\,r}, \Zbar_0^{\,r} \big),%
  &\big(\SA_0, G_1 \big)%
  &\equiv \big(\SAbar_0^{\,r}, \Zbar_1 \big) ,\\ 
  \big(\SA_1,G_0 \big)%
  &\equiv \big(\SA(\Zbar_1), \Zbar_0^{\,r}\big),% 
  &\big(\SA_1, G_1\big)%
  &\equiv \big(\SA(\Zbar_1), \Zbar_1\big). 
\end{align*}

Since $\Cent(\SA(Z_0))= \sum_{j=1}^{r-1} \sigma^j V_0$, there is an
isomorphism $H^*(M(\SA(Z_i))) \cong H^*(M(\SAbar_i))$, for $i=0,1$, that on
elements we denote by $h\leftrightarrow \overline h$.  The isomorphism in
\cref{lem:frho}\cref{it:frho5}, together with the K\"unneth Theorem, shows
that there is a commutative diagram of isomorphisms of graded vector spaces
\[
  \xymatrix{H^*(M(\SA_i)) \ar[d]^{\sigma^*} \ar[r]^{f_i^*} &
    H^*(M(\SAbar_i))^{\otimes r} \ar[d]^{\widetilde\sigma^*} \\
    H^*(M(\SA_i)) \ar[r]^{f_i^*} & H^*(M(\SAbar_i))^{\otimes r} }
\]
such that for homogeneous elements $h_1, \dots, h_r\in H^*(M(\SA(Z_i)))$ we
have
\[
  \xymatrix{h_1\sigma(h_2) \dotsm \sigma^{r-1}(h_r) \ar@{|->}[d]^{\sigma^*}
    \ar@{|->}[r] ^{f_i^*} & \overline{h_1}\otimes \overline{h_2} \otimes
    \dotsm \otimes \overline{h_r} \ar@{|->}[d]^{\widetilde\sigma^*} \\
    \pm \sigma^{r}(h_r)\sigma(h_1) \dotsm \sigma^{r-1}(h_{r-1})
    \ar@{|->}[r]^{f_i^*} & \pm \sigma^r(\overline{h_r} ) \otimes
    \overline{h_1} \otimes \dotsm \otimes \overline{h_{r-1}} ,}
\]
where the sign is determined by the degrees of $h_1$, \dots, $h_r$. 

%%%%%%%%%%%%%%%%%%%%%%%%%%%%%%%%%%%%%%%%%%%%%%%%%%%%%%%%%%%%%%%%%%%%%%
\begin{proposition}\label{pro:ij}
  For $i\in \{0,1\}$, the isomorphisms $f_i^*$ intertwine the action of $G_0$
  with the action of $\Zbar_0^{\,r}$, and the action of $G_1$ with the action
  of $\Zbar_1=\langle \Zbar_0^{\,r} \widetilde\sigma\rangle$, and induce
  isomorphisms
  \begin{align*}
    H^*(M(\SA_0))^{G_0}%
    &\cong \big(H^*( M(\SAbar_0) )^{\Zbar_0} \big)^{\otimes r},%
    & H^*(M(\SA_0))^{G_1}%
    &\cong \Big(\big(H^*( M(\SAbar_0) )^{\Zbar_0} \big)^{\otimes r}
      \Big)^{\widetilde\sigma^*} ,\\
    H^*(M(\SA_1))^{G_0}%
    &\cong \big(H^*( M(\SAbar_1) )^{\Zbar_0} \big)^{\otimes r}, %
    &H^*(M(\SA_1))^{G_1}%
    &\cong \Big(\big(H^*( M(\SAbar_1) )^{\Zbar_0} \big)^{\otimes r}
      \Big)^{\widetilde\sigma^*} .
  \end{align*}
\end{proposition}

\begin{proof}
  The first assertion follows by direct computation and the second follows
  from the first by taking invariants and using that $\Zbar_0^{\,r}$ acts
  componentwise on $r$-fold tensor products.
\end{proof}

%%%%%%%%%%%%%%%%%%%%%%%%%%%%%%%%%%%%%%%%%%%%%%%%%%%%%%%%%%%%%%%%%%%%%%
\subsection{}

\cref{pro:ij} shows how to compute $H^*(M(\SA_i))^{G_j}$ for $i,j\in \{0,1\}$
starting from the reflection coset $\Cbar= \Zbar_0\sigmabar$. For
$H^*(M(\SA_0))^{G_0}$ and $H^*(M(\SA_1))^{G_0}$ this is clear from the
K\"unneth Theorem and the compatible factorizations of $\SA_0$, $\SA_1$, and
$G_0$. For example $H^*(M(\SA_0))^{G_0} \cong H^*(M(\SAbar_0))^{\Zbar_0}$. For
$H^*(M(\SA_0))^{G_1}$ and $H^*(M(\SA_1))^{G_1}$, choose a homogeneous basis of
$H^*(M(\SA(Z_i)))^{Z_0}$ consisting of eigenvectors for $(\sigma^r)^*$. Then
an explicit basis of $\big((H^*( M(\SAbar_i) )^{\Zbar_0} )^{\otimes r}
\big)^{\widetilde\sigma^*}$ is given by certain orbit sums (indexed by a set
of Lyndon words). Further details of the construction are omitted as this
basis does not play a role in the sequel.

%%%%%%%%%%%%%%%%%%%%%%%%%%%%%%%%%%%%%%%%%%%%%%%%%%%%%%%%%%%%%%%%%%%%%% 
%%%%%%%%%%%%% \S 
%%%%%%%%%%%%%%%%%%%%%%%%%%%%%%%%%%%%%%%%%%%%%%%%%%%%%%%%%%%%%%%%%%%%%%
\section{Irreducible reflection pairs} \label{sec:cla}

The next task is to compile a list of irreducible reflection pairs, $(\SA,G)$,
with the property that if $C$ is an irreducible reflection coset with $G_0=
CC\inverse$ and $G_1=\langle C\rangle$, then each $H^*(M(\SA_i))^{G_j}=
H^*(M(\SA))^{G}$ for some pair $(\SA, G)$ on the list. The first step is to
establish notation for the irreducible reflection groups, arrangements, and
cosets.

%%%%%%%%%%%%%%%%%%%%%%%%%%%%%%%%%%%%%%%%%%%%%%%%%%%%%%%%%%%%%%%%%%%%%%
\subsection{Irreducible complex reflection groups and reflection
  types}\label{ssec:crg}

Because reflection groups are pairs, and not abstract groups, the
classification of reflection groups takes into account the underlying vector
space $V$. Following \cite[\S3]{orliksolomon:arrangements}, we say that two
reflection groups $(G,V)$ and $(G',V')$ have the same \emph{reflection type}
if there are isomorphisms $G\to G'$ and $V\to V'$ such that $(gv)'=g'v'$ for
$g\in G$ and $v\in V$. The reflection type of a non-trivial complex reflection
group is the concatenation of the reflection types of its non-trivial,
irreducible factors. The reflection types of the irreducible reflection groups
are given by the Shephard-Todd classification.

The notation we use for the irreducible complex reflection groups and their
reflection types is mostly consistent with
\cite[App.~C]{orlikterao:arrangements}. In particular, we use the Coxeter
label for symmetric groups and exceptional groups. If $(G, V)$ is an
irreducible complex reflection group, then with the Shephard-Todd grouping,
$(G,V)$ has the same reflection type as one of the following reflection
groups:
\begin{enumerate}
\item A Coxeter group of type $A_n$ for $n\geq 0$: $A_0$ is the trivial group
  acting on $V=\BBC$. For $n>0$, let $W_{n+1}=G(1,1,n+1)$ be the group of
  permutation matrices in $\GL_{n+1}(\BBC)$ and set $V=v_1^{\perp}$, where
  $v_1$ is the vector in $\BBC^n$ with all entries equal to $1$. Then
  $W_{n+1}$ stabilizes $V$ and $A_n$ is $(W_{n+1}|_V,V)$.
\item $G$ is in the infinite family of groups $G(r,p,n)$, acting on
  $V=\BBC^n$, where $r$, $p$, and $n$ are positive integers, $p$ divides $r$,
  $r\geq 2$, $n\geq 2$, and $(r,p,n)\ne (2,2,2)$. To enhance readability, in
  the tables below we use subscripts and write $G_{r,p,n}$ instead of
  $G(r,p,n)$.
  
\item $G$ is a group of roots of unity, $\mu_r$ for some $r> 2$, acting on
  $V=\BBC$.
\item $G$ is one of thirty-four exceptional groups labeled $G_{4}$, $G_5$,
  \dots, $G_{37}=E_8$, each acting on an explicitly defined complex vector
  space (see \cite[Ch.~8]{lehrertaylor:unitary}).
\end{enumerate}

%%%%%%%%%%%%%%%%%%%%%%%%%%%%%%%%%%%%%%%%%%%%%%%%%%%%%%%%%%%%%%%%%%%%%%
\subsection{Irreducible reflection arrangements}\label{ssec:ra}

It is not that uncommon for non-isomorphic reflection groups to have the same
reflection arrangement. Parallel to the classification of irreducible
reflection groups, we have the following classification of irreducible
reflection arrangements.
\begin{enumerate}
\item We may realize $\SA(A_n)$ as the arrangement of $W_{n+1}$ acting on
  $\BBC^{n+1}$, which is the braid arrangement $\SB_{n+1}$.

  Notice that $\SB_{1}= \SA(A_0)$ is the empty arrangement and that $\SB_{2}=
  \SA(A_1)$ is the unique irreducible reflection arrangement with rank $1$.

\item Using the notation in \cite[Ch.~6]{orlikterao:arrangements}, for $n,r
  \geq 2$,
  \[
    \SA(G(r,r,n))= \SA_n^0(r), \quad\text{and} \quad \text{$\SA(G(r,p,n))=
      \SA_n(r)$ for $p<r$.}
  \]

\item For $r>1$, $\SA(\mu_r)=\SB_{2}$.
  
\item The arrangements of the exceptional complex reflection groups are
  denoted using script instead of roman characters, for example $\SG_4$ and
  $\SE_8$. There are equalities of arrangements,
  \[
    \SG_7= \SG_{10},\quad \SA_2(4)= \SG_8, \quad \SG_9 = \SG_{13},
    \quad\text{and}\quad \SG_{11}=\SG_{15}.
  \]
\end{enumerate}

%%%%%%%%%%%%%%%%%%%%%%%%%%%%%%%%%%%%%%%%%%%%%%%%%%%%%%%%%%%%%%%%%%%%%%
\subsection{Irreducible reflection cosets}\label{ssec:rc}

Brou\'e, Malle, and Michel \cite{brouemallemichel:spetsesI} have classified
irreducible reflection cosets. If $C=G\sigma \subseteq \GL(V)$ is an
irreducible reflection coset with $\dim V>1$, then up to scalar multiples of
the identity, the pair $(G, \sigma)$ appears in the following list.
\begin{enumerate}
\item $G$ is irreducible and $\sigma$ is the identity linear
  transformation, \label{it:rc1}
\item $G=G(r,p,n)\subseteq \GL(\BBC^n)$ with $r>1$ and $\sigma =\varphi^q$,
  where $\varphi$ is the diagonal matrix with entries $(\omega_r, 1, \dots,
  1)$ and $0< q<r$,
\item $G=F_4$ and $\sigma=\gamma$ induces the graph automorphism of
  $F_4$, \label{it:rc3} 
\item $G=D_4=G(2,2,4)$ and $\sigma=\tau$ induces the triality automorphism of
  $D_4$, \label{it:rc4}
\item $G=G(3,3,3)$, embedded as a normal reflection subgroup in $G_{26}$ and
  $\sigma\in G_{26}$ is any element with order $4$ that normalizes
  $G$, \label{it:rc5}
\item $G=G_5$, or $G=G_7$, embedded as normal reflection subgroups of $G_{15}
  =\langle G_7, G_{14}\rangle$, and $\sigma=\rho_2$ is any reflection in
  $G_{14}$ with order $2$,
\item $G=G_5$, or $G=G_7$, with $G_5\subseteq G_7$ and $G_7$ embedded as a
  normal reflection subgroup of $G_{10}$, and $\sigma=\rho_4$ is any
  reflection in $G_{10}$ with order $4$, or
\item $G=G(4,2,2)$, embedded as a normal reflection subgroup in $G_6$, and
  $\sigma=\rho_3$ is any reflection in $G_6$ with order $3$. \label{it:rc8}
\end{enumerate}

%%%%%%%%%%%%%%%%%%%%%%%%%%%%%%%%%%%%%%%%%%%%%%%%%%%%%%%%%%%%%%%%%%%%%%
\subsection{Irreducible reflection pairs}\label{ssec:rp}

As noted above, each reflection coset $C=G\sigma$ leads to a family of
irreducible reflection cosets $Cz=G\sigma z$, where $z$ runs over the group of
scalar transformations of the underlying vector space with finite order. For
each such $G\sigma z$ we want to compute
\begin{align*}
  &H^*(M(\SA(G)))^{G},%
  && H^*(M(\SA(G)))^{\langle G\sigma z\rangle}= H^*(M(\SA(G)))^{\langle
     G\sigma\rangle},\\ 
  &H^*(M(\SA(\langle G\sigma z \rangle)))^{G},%
  && H^*(M(\SA(\langle G\sigma z \rangle)))^{\langle G\sigma z\rangle}=
     H^*(M(\SA(\langle G\sigma z \rangle)))^{\langle G\sigma\rangle} .
\end{align*}
Thus, for each irreducible reflection coset $G\sigma$ as in
\cref{ssec:rc}\cref{it:rc1}-\cref{ssec:rc}\cref{it:rc8}, and scalar
transformation $z$, we need to compute $H^*(M(\SA))^{G}$, where $(\SA, G)$ is
one of the following four reflection pairs
\begin{equation}
		\label{eq:rpt}	
\begin{aligned}
	& (\SA(G), G), &&  (\SA(G), \langle G\sigma z\rangle),\\
	& (\SA(\langle G\sigma z\rangle), G), &&  (\SA(\langle G\sigma z\rangle), \langle G\sigma z\rangle).
\end{aligned}	
\end{equation}
%a reflection pair with
%\begin{equation}
%  \label{eq:rpt}
%  \SA\in \{ \SA(G), \SA(\langle G\sigma z\rangle)\}
%  \quad\text{and}\quad G\in\{G, \langle G\sigma\rangle \}.
%\end{equation}

To compile the list of pairs $(\SA, G)$ in \cref{eq:rpt} we first compute the
arrangements $\SA(\langle G\sigma z \rangle)$ and the groups $\langle G\sigma
\rangle$ for each of the reflection cosets $G\sigma$ in
\cref{ssec:rc}\cref{it:rc1}-\cref{ssec:rc}\cref{it:rc8}, and then organize the
results by the isomorphism type of $\SA$.

%%%%%%%%%%%%%%%%%%%%%%%%%%%%%%%%%%%%%%%%%%%%%%%%%%%%%%%%%%%%%%%%%%%%%%
\begin{theorem}\label{thm:irp}
  Suppose $G\sigma$ is one of the reflection cosets in
  \cref{ssec:rc}\cref{it:rc1}-\cref{ssec:rc}\cref{it:rc8} and $z$ is a scalar
  transformation with finite order.
  \begin{enumerate}
  \item If $\rk G=2$, the arrangements $\SA(\langle G\sigma z\rangle)$ with
    $\SA(\langle G\sigma z\rangle) \ne \SA(G)$, and the groups $\langle
    G\sigma\rangle$, are given in \cref{tab:irp2} in the appendix.
  \item If $\rk G>2$, then $\SA(\langle G\sigma z\rangle) = \SA(G)$,
    unless $G=G(r,r,n)$ or $G=G_{25}$.
  \item If $G=G(r,r,n)$ with $n>2$ and $\SA(\langle G\varphi^q z\rangle)
    \ne \SA(G)$, then $\SA(\langle G\varphi^q z\rangle) = \SA_n(r)$.
  \item If $G=G_{25}$ and $\SA(\langle G z\rangle) \ne \SA(G)$, then
    $\SA(\langle G z\rangle) = \SG_{26}$.
  \end{enumerate}
\end{theorem}

%%%%%%%%%%%%%%%%%%%%%%%%%%%%%%%%%%%%%%%%%%%%%%%%%%%%%%%%%%%%%%%%%%%%%% 
\subsection{} \label{}

The proof of the theorem is a routine calculation, using Springer's theory of
regular elements, for each $\sigma$ in
\cref{ssec:rc}\cref{it:rc1}-\cref{ssec:rc}\cref{it:rc8}. The computations in
rank two are left to the reader. Here we sketch the main idea and work out
some examples to give the flavor of the calculations.

Suppose $G$ is any reflection group. Define a reflection coset, $C$, to be
\emph{regular} if $\SA(C)\nsubseteq \SA(CC\inverse)$. Equivalently, $C$ is
regular if and only if $C$ contains a reflection, $r$, such that
$\Fix(r)\notin \SA(CC\inverse)$. Clearly, if $C=G\sigma z$ and the index of
$G$ in $\langle G\sigma z\rangle$ is $m$, then $\SA( \langle G\sigma z\rangle)
= \bigcup_{k=1}^m \SA(G\sigma^kz^k)$. Thus, $\SA( \langle G\sigma z\rangle)
\ne \SA(G)$ if and only if $G\sigma^kz^k$ is a regular reflection coset for
some $k$ with $0< k<m$.

Suppose $G\sigma z$ is regular. Let $\zeta$ denote the eigenvalue of $z$ and
choose $g\in G$ so that $g\sigma z$ is a reflection with $\Fix(g\sigma
z)\notin \SA(G)$. Then $\zeta$ is a root of unity and $\Fix(g\sigma z)$ is
equal to the $\zeta\inverse$-eigenspace of $g\sigma$.  Because $\Fix(g\sigma
z)\notin \SA(G)$, the $\zeta\inverse$-eigenspace of $g\sigma$ contains a
regular vector in $V$. The proof of the next theorem is a straightforward
application of \cite[Thm.~6.4]{springer:regular}.

%%%%%%%%%%%%%%%%%%%%%%%%%%%%%%%%%%%%%%%%%%%%%%%%%%%%%%%%%%%%%%%%%%%%%% 
\begin{theorem}\label{thm:spr}
  Suppose that $G\sigma z$ is a regular reflection coset, the degrees of
  $G$ are $d_1$, \dots, $d_n$, and $f_1$, \dots, $f_n$ are basic polynomial
  invariants of $G$ that are also eigenfunctions for $\sigma$. Say $\deg
  f_i=d_i$ and $\sigma \cdot f_i= \epsilon_i f_i$ for $1\leq i\leq n$. Choose
  $g\in G$ so that $g\sigma z$ is a reflection.  Then
  \begin{enumerate}
  \item the eigenvalues of the reflection $g\sigma z$ are $\epsilon_1\inverse
    \zeta^{d_1}$, \dots, $\epsilon_n\inverse \zeta^{d_n}$ (so exactly one of
    $\epsilon_1\inverse \zeta^{d_1}$, \dots, $\epsilon_n\inverse \zeta^{d_n}$
    is not equal to one) and
  \item if $h\in G$ and $h\sigma z$ is a reflection, then $\Fix(g\sigma z)$
    and $\Fix(h\sigma z)$ are in the same $G$-orbit.
  \end{enumerate}
  In particular, $\SA(G\sigma z)$ is a single $G$-orbit of hyperplanes in $V$,
  which is disjoint from $\SA(G)$.
\end{theorem}

The eigenvalues $\epsilon_i$ are computed in
\cite[App.~D.5]{lehrertaylor:unitary}. 

%%%%%%%%%%%%%%%%%%%%%%%%%%%%%%%%%%%%%%%%%%%%%%%%%%%%%%%%%%%%%%%%%%%%%%
\subsection{}
Using \cite[Thm.~12.23]{lehrertaylor:unitary} the theorem can be rephrased to
include a characterization of regular reflection cosets. This stronger result
is not needed in full generality, but it is convenient to use the special case
when $\sigma$ is the identity.

For an integer $d$, let $a(d)$ denote the number of degrees of $G$ that are
divisible by $d$. Similarly, define $a^*(d)$ to be the number of codegrees of
$G$ that are divisible by $d$. We say that $d$ is a \emph{regular reflection
  number of $G$} if $a(d)=a^*(d)=n-1$. The degrees and codegrees of the
exceptional reflection groups are given in
\cite[App.~D.2]{lehrertaylor:unitary}, from which the regular reflection
numbers can easily be computed.

Suppose $z$ has order $d$. It follows from \cite[Thm.~12.23]
{lehrertaylor:unitary} that $Gz$ is a regular reflection coset if and only if
$d$ is a regular reflection number for $G$. If so, then $\SA(Gz)$ is a single
$G$-orbit of hyperplanes, which is obviously disjoint from $\SA(G)$.

Suppose $G$ is irreducible. If $Gz$ and $Gz'$ are two reflection cosets, then
$Gz=Gz'$ if and only if $z\inverse z'$ is in the center of $G$, and if so,
then $\SA(Gz)=\SA(Gz')$. Thus scalar representatives of reflection cosets, and
in particular regular reflection cosets, are determined modulo the center of
$G$.

It turns out that if the rank of $G$ is greater than two, $G$ has regular
reflection numbers if and only if $G=G(r,r,n)$ or $G=G_{25}$. These two cases
are discussed in more detail below. In rank two, the only irreducible
reflection groups with no regular reflection numbers are the groups $G(r,p,2)$
with $p$ even, $G_7$, $G_{11}$, $G_{15}$, and $G_{19}$.

%%%%%%%%%%%%%%%%%%%%%%%%%%%%%%%%%%%%%%%%%%%%%%%%%%%%%%%%%%%%%%%%%%%%%%
\subsection{Examples}\label{ssec:ex}

In these examples, as well as in the appendix, for a positive integer, $r$,
let $z_r\in \GL(\BBC^2)$ denote the scalar matrix with eigenvalue $\omega_r$.

A first example is the group $G_{25}$. The degrees of $G_{25}$ are $6$, $9$,
and $12$ and the codegrees are $0$, $3$, $6$. Thus, the regular reflection
numbers of $G_{25}$ are $2$ and $6$. Moreover, $\gcd(6,9,12)=3$, so the center of
$G_{25}$ has order three and hence $G_{25}z_2= G_{25}z_6$. It follows that
there is a unique regular reflection coset, namely $G_{25}z_2$. It is
well-known that $\langle G_{25} z_2\rangle =G_{26}$ (see \cite[Thm.~8.42]
{lehrertaylor:unitary}).

%%%%%%%%%%%%%%%%%%%%%%%%%%%%%%%%%%%%%%%%%%%%%%%%%%%%%%%%%%%%%%%%%%%%%%
\subsection{}
Next suppose that $G=G(r,p,n)$ with $n>2$.

The degrees of $G$ are $r$, $2r$, \dots, $(n-1)r$, $nr/p$ and the codegrees
are $0$, $r$, \dots, $(n-1)r$. Thus $G(r,p,n)$ has no regular reflection
numbers if $p\ne r$ and the reflection numbers of $G(r,r,n)$ are the
divisors of $r$ that do not divide $n$. If $d$ divides $r$ and not $n$, then
$\langle G(r,r,n)z_d\rangle \subseteq G(r,1,n)$ and $\SA( \langle
G(r,r,n)z_d\rangle) \ne \SA(G(r,r,n))$, so $\SA(\langle G(r,r,n)z_d\rangle )=
\SA_n(r)$.

Now consider $\sigma=\varphi^q$ with $0<q<r$. If $p\ne r$, then
\begin{equation}
  \label{eq:10}
  \SA_n(r) = \SA(G) \subseteq \SA(\langle G \varphi^q z\rangle) \subseteq
  \SA(\langle G(r,1,n) \varphi^q z\rangle) =\SA(\langle G(r,1,n) z\rangle) =
  \SA_n(r),
\end{equation}
where the last equality holds because $G(r,1,n)$ has no regular reflection
numbers. Hence $\SA(\langle G \varphi^q z\rangle) = \SA(G)$ for every scalar
transformation $z$. If $p=r$ and $\SA(\langle G \varphi^q z\rangle) \ne
\SA(G)$, then the computation in \cref{eq:10} becomes
\[
  \SA_n^0(r) = \SA(G) \subsetneq \SA(\langle G \varphi^q z\rangle) \subseteq
  \SA(\langle G(r,1,n) \varphi^q z\rangle) = \SA_n(r),
\]
from which it again follows that $\SA(\langle G \varphi^q z\rangle) = \SA_n(r)$.

%%%%%%%%%%%%%%%%%%%%%%%%%%%%%%%%%%%%%%%%%%%%%%%%%%%%%%%%%%%%%%%%%%%%%%
\subsection{}
As a final example, suppose $G=D_4=G(2,2,4)$ and that conjugation by
$\sigma=\tau$ induces the triality automorphism of $G$ as in
\cref{ssec:rc}\cref{it:rc4}. Consider a reflection coset $G\tau^q z$. The
degrees of $D_4$ are $2$, $4$, $4$, $6$, and a basic set of polynomial
invariants of $G$ may be chosen so that the eigenvalues of $\tau$ on these
invariants are $1$, $\omega_3$, $\omega_3\inverse$ and $1$, respectively. Just
suppose that $G\tau z$ is regular and choose a reflection $g\tau z\in G\tau
z$. By \cref{thm:spr} the eigenvalues of $g\tau z$ are $\zeta^{2}$,
$\omega_3\inverse \zeta^4$, $\omega_3 \zeta^4$, and $\zeta^6$, where $\zeta$
is the eigenvalue of $z$. It is never true that exactly three of $\zeta^{2}$,
$\omega_3\inverse \zeta^4$, $\omega_3 \zeta^4$, and $\zeta^6$ are equal to
one. This is a contradiction, so $G\tau z$ is not a regular reflection coset
for any $z$.

Sorting the computations in \cref{thm:irp} by reflection arrangement we obtain
the classification of irreducible reflection pairs in the next corollary.

%%%%%%%%%%%%%%%%%%%%%%%%%%%%%%%%%%%%%%%%%%%%%%%%%%%%%%%%%%%%%%%%%%%%%%
\begin{corollary}\label{cor:irp}
  Suppose $C$ is an irreducible reflection coset with $G_0= C C\inverse$ and
  $G_1 = \langle C \rangle$. Then for $i,j\in \{0,1\}$, one of the following
  statements holds.
  \begin{enumerate}
  \item $H^*( M(G_i) )^{G_j} = H^*( M( \SA(\Gtilde) ))^{\Gtilde}$, where
    $\Gtilde$ is irreducible.
  \item The rank of $G_0$ is equal to two and $H^*( M(G_i) )^{G_j} = H^*( M(
    \SA) )^{G}$, where $(\SA, G)$ is one of the pairs in \cref{tab:rk2} in the
    appendix.  The table also records a reflection coset, $C$, that gives rise
    to the reflection pair $(\SA, G)$.
  \item The rank of $G_0$ is greater than two and $H^*( M(G_i) )^{G_j} = H^*(
    M(\SA) )^{G}$, where $(\SA, G)$ is one of the following
    pairs: \label{it:irp3}
    \begin{enumerate}
    \item $(\SA,G)= (\SA_n(r), G(r,r,n))$ or $(\SA,G)= (\SA_n^0(r),
      G(r,p,n))$, where $p<r$. Both pairs arise from the coset
      $G(r,r,n)\varphi^p$.
    \item $(\SA,G)= (\SF_4, \langle F_4\gamma \rangle)$. This pair arises from
      the coset $F_4 \gamma$. \label{it:irp3f4}
    \item $(\SA,G)= (\SA_4^0(2), \langle D_4\tau \rangle)$. This pair arises
      from the coset $D_4 \tau$. \label{it:irp3d4}
    \item $(\SA,G)= (\SA_3^0(3), \langle G(3,3,3) \sigma \rangle)$. This pair
      arises from the coset $G(3,3,3) \sigma$. \label{it:irp3g3}
    \end{enumerate}
  \end{enumerate}
\end{corollary}

%%%%%%%%%%%%%%%%%%%%%%%%%%%%%%%%%%%%%%%%%%%%%%%%%%%%%%%%%%%%%%%%%%%%%%
\subsection{Families of irreducible reflection pairs}\label{ssec:firp}

For the purpose of computing the spaces $H^*(M(\SA))^G$, the collection of
irreducible reflection pairs naturally divides into five families.
\begin{enumerate}
\item A first family is the collection of all rank two irreducible reflection
  pairs. These reflection pairs are listed in \cref{tab:rk2}, along with
  reflection cosets that give rise to them.

  When $(\SA,G)$ has rank two, the Poincar\'e polynomial $P(\SA,G; t)$, and a
  basis of $H^*(M(\SA))^G$, can be computed using \cref{pro:b2}, once the
  orbits of $G$ on $\SA$ are known. These orbits are straightforward to
  compute for each rank two, irreducible reflection pair. In the rest of this
  paper we take $P(\SA,G; t)$ and $H^*(M(\SA))^G$ as known for rank two
  reflection pairs.

\item A second family is the set of pairs $(\SA(A_n), A_n)$ for $n>2$. It was
  observed above that if $n>1$, then $H^*(M(A_{n-1}))^{A_{n-1}} \cong
  H^*(M(\SB_{n}))^{W_{n}}$.

  Brieskorn \cite{brieskorn:tresses} has shown that $P(\SB_{n},W_{n}; t) =
  1+t$. Because $H^1(M(\SB_{n}))^{W_{n}}$ affords the permutation
  representation of $W_n$ on $\SB_n$, a basis of $H^*(M(\SB_{n}))^{W_{n}}$ is
  given by $1\in H^0(M(\SB_{n}))^{W_{n}}$ and the orbit sum of the
  Orlik-Solomon generator, $h$, in $H^1(M(\SB_{n}))^{W_{n}}$, where $H$ is any
  hyperplane in $\SB_n$.
  
\item Third is the collection of pairs $(\SA(G), G)$, where $G$ is an
  exceptional, irreducible, reflection group with rank greater than two.  As a
  matter of terminology, in the rest of this paper we call these pairs
  \emph{primitive.}

  The computation of $P(\SA,G;t)$ for primitive reflection pairs proceeds by
  recursion. This argument is an extension of that given by Brieskorn for the
  exceptional Coxeter groups.

\item Next are the three irreducible reflection pairs in
  \cref{cor:irp}\cref{it:irp3f4}, \cref{it:irp3d4}, \cref{it:irp3g3}, which we
  call \emph{very exceptional.}

  The computations for the very exceptional reflection pairs is most easily
  dealt with case-by-case, using results from the other families.

\item Finally, the irreducible reflection pairs, $(\SA,G)$, where $\SA\in
  \{\SA_n^0(r), \SA_n(r)\}$ and $G\in \{\, G(r,r,n), G(r,p,n) \,\}$ with
  $r>1$, $p$ is a proper divisor of $r$, and $n>2$. We call these pairs
  \emph{imprimitive.}

  The computation of $P(\SA,G;t)$ for imprimitive reflection pairs proceeds by
  induction on $n$. Again, this argument is an extension of that given by
  Brieskorn for $(\SB_n, W_n)$.
\end{enumerate}

%%%%%%%%%%%%%%%%%%%%%%%%%%%%%%%%%%%%%%%%%%%%%%%%%%%%%%%%%%%%%%%%%%%%%% 
%%%%%%%%%%%%% \S Invariants and Poincar\'e polynomials ...
%%%%%%%%%%%%%%%%%%%%%%%%%%%%%%%%%%%%%%%%%%%%%%%%%%%%%%%%%%%%%%%%%%%%%%
\section{Invariants and Poincar\'e polynomials for irreducible
  reflection pairs} \label{sec:thm3}

%%%%%%%%%%%%%%%%%%%%%%%%%%%%%%%%%%%%%%%%%%%%%%%%%%%%%%%%%%%%%%%%%%%%%%
\subsection{}

Suppose $(\SA,G)$ is an irreducible reflection pair. Then $\SA=\SA(\Gtilde)$,
where either (1) $\Gtilde=G$ or (2) $\Gtilde \ne G$ and there is an
irreducible reflection coset, $C$, in \cref{tab:rk2} or
\cref{cor:irp}\cref{it:irp3}, such that $\Gtilde\in \{ CC\inverse, \langle
C\rangle\}$.

Recall from \cref{pro:bries}\cref{it:bries2} that
\[
  H^k(M(\SA))^G \cong \bigoplus_{X\in \CX(\SA,G)_k} H^{k} ( M(\SA_X))
  ^{N_G(X)} .
\]
Clearly, to compute $\dim H^k(M(\SA))^G$ it is sufficient to determine the set
of $X\in \CX(\SA,G)_k$ such that $\dim H^{k} ( M(\SA_X)) ^{N_G(X)} \ne 0$ and
compute $\dim H^{k} ( M(\SA_X)) ^{N_G(X)}$ for each such $X$.

Define
\[
  \CX(\SA,G)_k^{\tdi} = \{\, X\in \CX(\SA,G)_k \mid \dim H^{k} (
  M(\SA_X))^{N_G(X)} \ne 0\,\}
\]
(\textrm{tdi} for ``top degree invariants''). In practice it turns out that
$\CX(\SA,G)_k^{\tdi}$ has at most two elements and that if $X\in
\CX(\SA,G)_k^{\tdi}$, then $\dim H^{k} ( M(\SA_X)) ^{N_G(X)}$ is at most two,
except when $\SA=\SA_2(2r)$, $G=G(r,p,2)$, and $p$ is even (in which case the
dimension is equal to $3$).

%%%%%%%%%%%%%%%%%%%%%%%%%%%%%%%%%%%%%%%%%%%%%%%%%%%%%%%%%%%%%%%%%%%%%%
\subsection{Orbit representatives for primitive and imprimitive reflection
  pairs}

Suppose $(\SA,G)$ is an irreducible reflection pair and $X\in L(\SA)$.  Then
$Z_{\Gtilde}(X)$ acts faithfully on $X^{\perp}$. We may identify
$Z_{\Gtilde}(X)$ with $Z_{\Gtilde}(X)|_{X^{\perp}}$ and thus consider
$Z_{\Gtilde}(X)$ as a subgroup of $\GL(X^{\perp})$ generated by
reflections. Then $(Z_{\Gtilde}(X), X^{\perp})$ is an \emph{effective} reflection
group, that is, no irreducible factor of the reflection type of
$(Z_{\Gtilde}(X), X^{\perp})$ is equal to $A_0$.

Let $\CT$ denote the set of reflection types of effective complex reflection
groups and define
\[
  \rt\colon L(\SA)\to \CT\quad\text{by} \quad \rt(X)=\text{``the reflection
    type of $(Z_{\Gtilde}(X), X^{\perp})$''}.
\]
It is clear that if $g\in G$, then $(Z_{\Gtilde}(X), X^{\perp})$ and
$(Z_{\Gtilde}(gX), (gX)^{\perp})$ have the same reflection type, so $\rt$ is
constant on $G$-orbits. Define the \emph{reflection type} of a $G$-orbit to be
$\rt(X)$ for any $X$ in the orbit.

%%%%%%%%%%%%%%%%%%%%%%%%%%%%%%%%%%%%%%%%%%%%%%%%%%%%%%%%%%%%%%%%%%%%%% 
\subsection{}\label{ssec:pot}

Suppose that $(\SA, G) =(\SA(G),G)$ is a primitive reflection pair. Orlik and
Solomon have shown that the map $\rt$ is very close to being an injection when
restricted to $\CX(\SA,G)$ and have given a classification of the orbits of
$G$ in $L(\SA(G))$ based on the reflection types of pointwise stabilizers of
orbit representatives (see \cite[App.~C] {orlikterao:arrangements}). This
classification is used in \cref{thm:3} and its proof as follows: Let
$\CT(\SA,G)$ denote the set of orbit types in the Orlik-Solomon
classification. For each $T\in \CT(\SA,G)$ choose $X_T\in L(\SA)$ with
$\rt(X_T)=T$. We may take $\CX(\SA,G)= \{\, X_T\mid T\in \CT(\SA,G)
\,\}$. Then $\CT(\SA,G)$ indexes both $\CX=\CX(\SA,G)$ and $L(\SA)/G$. Define
\[
  \CT(\SA,G)_k= \{\, T\in \CT(\SA,G)\mid X_T\in \CX_k\,\} \text{ and }
  \CT(\SA,G)^{\tdi}= \{\, T\in \CT(\SA,G)\mid X_T\in \CX^{\tdi}\,\}.
\]

The rule that maps an orbit in $L(\SA(G))$ to the isomorphism type of the
pointwise stabilizer of an orbit representative is not always one-to-one. For
primitive reflection pairs it turns out that if $T$ occurs as the reflection
type of the pointwise stabilizer for more than one orbit, then $T\notin
\CT(\SA,G)^{\tdi}$, so this ambiguity does not affect the computations in this
paper. For example, the group $G_{27}$ has seven orbits in $L(\SG_{27})$,
\[
  \CT(\SG_{27}, G_{27}) = \{ A_0, A_1, A_2', A_2'', B_2, I_2(5), G_{27} \},
\]
so $\CT(\SG_{27}, G_{27})_0 = \{A_0\}$, $\CT(\SG_{27}, G_{27})_1 = \{A_1\}$,
$\CT(\SG_{27}, G_{27})_2 = \{A_2', A_2'', B_2, I_2(5)\}$, and $\CT(\SG_{27},
G_{27})_3 = \{G_{27}\}$. It turns out that $\CT(\SG_{27}, G_{27})^{\tdi} = \{
A_0, A_1, B_2, G_{27} \}$.

It follows from \cref{thm:3} that $|\CT(\SG, G)^{\tdi}|$ tends to be
relatively small when compared with the cardinality of $|\CT(\SG, G)|$. For
example, $|\CT(\SG_{34}, G_{34})|=24$ and $|\CT(\SE_{8}, E_{8})|=41$, but it
turns out that $|\CT(\SG_{34}, G_{34})^{\tdi}|= |\CT(\SE_{8},
E_{8})^{\tdi}|=4$.

%%%%%%%%%%%%%%%%%%%%%%%%%%%%%%%%%%%%%%%%%%%%%%%%%%%%%%%%%%%%%%%%%%%%%%
\subsection{}

Next suppose that $(\SA, G)$ is imprimitive. It is not unusual for several
orbits to have the same reflection type. As explained in \cref{ssec:orb}, it
is more natural for computations to label the orbits of $G$ in $L(\SA)$ by
partitions, specifically, partitions of $m$ with $0\leq m\leq n$, together
with a second parameter in case $m=n$.

For a partition, $\lambda$, $X_\lambda$ denotes a $G$-orbit representative
with type $\lambda$. It turns out that the restriction of $\rt$ to
$\CX^{\tdi}$ is injective, hence a bijection. Consequently, we may identify
$\CX(\SA,G)^{\tdi}$ and $\CT(\SA,G)^{\tdi}$, and label these orbits by
partitions, $\lambda$, or reflection types, $T$, without ambiguity. With the
notation in \cref{ssec:orb}, the partitions indexing orbit representatives in
$\CX(\SA,G)^{\tdi}$ and the corresponding reflection types in
$\CT(\SA,G)^{\tdi}$ for imprimitive reflection pairs are given in the top part
in \cref{tab:thm32}.

%%%%%%%%%%%%%%%%%%%%%%%%%%%%%%%%%%%%%%%%%%%%%%%%%%%%%%%%%%%%%%%%%%%%%% 
\begin{theorem}\label{thm:3}
  Suppose $(\SA, G)$ is an irreducible reflection pair with rank greater than
  two.
  \begin{enumerate}
  \item \label{it:thm33} If $(\SA,G)$ is primitive or very exceptional, then
    $\dim H^{\rk T}( M(\SA_{X_T}))^{N_G(X_T)}=1$ for
    $T\in\CT(\SA,G)^{tdi}$. The sets $\CT(\SA, G)^{tdi}$ and Poincar\'e
    polynomials $P(\SA(G),G;t)$ are given in \cref{tab:thm33}.
    
  \item \label{it:thm32} If $(\SA,G)$ is imprimitive, then the partitions
    $\lambda\in \CX(\SA,G)^{\tdi}$, reflection types $T\in \CT(\SA,G)^{\tdi}$,
    and $\dim H^{\rk T}( M(\SA_{X_T}))^{N_G(X_T)}$ for $T\in \CT_G^{\tdi}$,
    are given in \cref{tab:thm32}.
  \end{enumerate}
\end{theorem}

%%%%%%%%%%%%%%%%%%%%%%%%%%%%%%%%%%%%%%%%%%%%%%%%%%%%%%%%%%%%%%%%%%%%%%
\begin{table}
  \renewcommand{\arraystretch}{1.2} \centering
  \caption{$(\SA, G)$ primitive or very exceptional}
  \begin{tabular} {>{$}l<{$}>{$}l<{$} @{\hspace {2em}}>{$}l<{$} @{\hspace
    {2em}} >{$}l<{$}}
    \toprule \addlinespace
    \rk&(\SA, G)&\CT(\SA,G)^{\tdi}& P(\SA, G;t) \\
    \addlinespace \toprule \addlinespace
    3& (\SG_{25},G_{25}) &A_0, C_3& 1+t\\
    3& (\SA_3^0(3), \langle G_{3,3,3} \sigma \rangle)&A_0,A_1  &\\
    4& (\SG_{32},G_{32})& A_0, C_3 \\
    6& (\SE_6 ,E_{6}) & A_0, A_1 \\

    \addlinespace \midrule \addlinespace
    3& (\CH_3 ,H_{3})& A_0,A_1,A_1^2,H_3 & 1+t+t^{n-1} + t^n\\
    3& (\SG_{24},G_{24})&A_0,A_1, B_2,G_{24} \\
    3& (\SG_{27},G_{27})&A_0 ,A_1,B_2,G_{27} \\
    4& (\SG_{29},G_{29})&A_0,A_1, B_3,G_{29} \\
    4& (\CH_4 ,H_{4})&A_0,A_1, H_3,H_4\\
    4& (\SG_{31},G_{31})&A_0,A_1, G_{4,2,3} ,G_{31}\\
    4& (\SF_4, \langle F_4\gamma \rangle)&A_0,A_1,B_3, F_4  &\\
    5& (\SG_{33},G_{33})&A_0,A_1, D_4, G_{33}\\
    4& (\SA_4^0(2), \langle D_4\tau \rangle)&A_0,A_1,A_1^3, D_4&\\ 
    6& (\SG_{34},G_{34})&A_0,A_1, G_{33} ,G_{34} \\
    7& (\SE_7 ,E_{7})&A_0,A_1, D_6,E_7 \\
    8& (\SE_8 ,E_{8})&A_0,A_1, E_7 ,E_8 \\

    \addlinespace \midrule \addlinespace
    3& (\SG_{26},G_{26})&A_0,A_1,C_3, A_1C_3 , G_{3,1,2} ,G_{26}%
                                  & 1+2t+2t^2+t^3\\ 
    4& (\SF_4 ,F_{4})&A_0,A_1,\tilde A_1, A_1 \tilde A_1,B_2,C_3,B_3, F_4
                                  &1+2t+2t^2+2t^3+t^4\\
    \addlinespace \bottomrule \addlinespace
  \end{tabular}
  \label{tab:thm33}
\end{table}
%%%%%%%%%%%%%%%%%%%%%%%%%%%%%%%%%%%%%%%%%%%%%%%%%%%%%%%%%%%%%%%%%%%%%% 

%%%%%%%%%%%%%%%%%%%%%%%%%%%%%%%%%%%%%%%%%%%%%%%%%%%%%%%%%%%%%%%%%%%%%%
\begin{table}[htb]
  \caption{$(\SA,G)$ imprimitive, $G=G(r,p,n)$ with $r\geq2$, $n\geq 3$,
    and $p\leq r$}
  \begin{minipage}{\linewidth}
    \footnotesize \centering
    \begin{tabular} {l cc >{$}c<{$} >{$}c<{$} >{$}c<{$} >{$}c<{$} >{$}c<{$}
      >{$}c<{$} >{$}c<{$} >{$}c<{$}}
      \toprule \addlinespace
      $\deg$&&& k=0 %
      & \multicolumn{2}{c}{$k=1$}%
      & \multicolumn{2}{c}{$2\leq k\leq n-2$}
      & \multicolumn{2}{c}{$k=n-1$} & k=n \\  
      \cmidrule(lr){1-1} \cmidrule(lr){4-4} \cmidrule(lr){5-6}
      \cmidrule(lr){7-8} \cmidrule(lr){9-10} \cmidrule(lr){11-11}
      $\CX^{\tdi}$&& $\lambda$&(1^n)%
      & (2\,1^{n-2})& (1^{n-1})%
      & (2\,1^{n-k-1})&(1^{n-k})& (2)& (1)&  \emptyset\\
      $\CT^{\tdi}$ &&$T$&A_0&A_1&G_{r,p,1}%
      & G_{r,p,k-1} A_1  & G_{r,p,k}%
                  & G_{r,p,n-2} A_1 & G_{r,p,n-1} &G_{r,p,n}\\ 
      \addlinespace \midrule \addlinespace
      \multirow{2}{*}{$\SA_n(r)$} &&$p,n$ even& 1 & 1 & 1& 1& 1& 2 & 1 &2\\
      \addlinespace
            &&else & 1 & 1 & 1& 1& 1& 1 & 1 &1\\
      \addlinespace  \midrule \addlinespace
      \multirow{2}{*}{$\SA_n^0(r)$} 
            & & $p,n$ even& 1 & 1 & -& -& -& 1 & - & 1\\
            &&else & 1 & 1 & -& -& -& - & - &-\\
      \addlinespace \bottomrule \addlinespace
    \end{tabular}
  \end{minipage}
  \label{tab:thm32}
\end{table}
%%%%%%%%%%%%%%%%%%%%%%%%%%%%%%%%%%%%%%%%%%%%%%%%%%%%%%%%%%%%%%%%%%%%%%

A first consequence of the theorem is that the polynomials $P(\SA(G),G; t)$
have only four possible patterns.

%%%%%%%%%%%%%%%%%%%%%%%%%%%%%%%%%%%%%%%%%%%%%%%%%%%%%%%%%%%%%%%%%%%%%%
\begin{corollary}\label{cor:1}
  Suppose that $G$ is a non-trivial, irreducible, complex reflection group
  with rank at least two.
  \begin{enumerate}
  \item If $G$ is one of either $G(r,r,n)$, where either $n$ or $r$ is odd;
    $G_4$; $G_8$; $G_{12}$; $G_{16}$; $G_{20}$; $G_{22}$; $G_{25}$; $G_{32}$;
    or $E_6$, then
    \[
      P(\SA(G), G;t)= 1+t.
    \]

  \item If $G$ is one of either $G(r,r,n)$, where both $n$ and $r$ are even;
    $G_{23}$; $G_{24}$; $G_{27}$; $G_{29}$; $G_{30}$; $G_{31}$; $G_{33}$;
    $G_{34}$; $E_7$; or $E_8$, then
    \[
      P(\SA(G), G;t)= 1+t + t^{n-1} +t^n.
    \]

  \item If $G$ is one of either $G(r,p,n)$, where $p<r$ and either $n$ or $p$
    is odd; $G_5$; $G_6$; $G_9$; $G_{10}$; $G_{13}$; $G_{14}$; $G_{17}$;
    $G_{18}$; $G_{21}$; $G_{26}$; or $G_{28}$; then
    \[
      P(\SA(G), G;t)= 1+2t +\dotsm + 2t^{n-1} +t^n.
    \]

  \item If $G$ is one of either $G(r,p,n)$, where $p<r$ and both $n$ and $p$
    are even; $G_7$; $G_{11}$; $G_{15}$; or $G_{19}$, then
    \[
      P(\SA(G), G;t)= 1+2t +\dotsm + 2t^{n-2} +3t^{n-1} +2t^n.
    \]
  \end{enumerate}
\end{corollary}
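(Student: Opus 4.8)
The plan is to read \cref{cor:1} off \cref{thm:3} by unwinding the graded decomposition. Taking $\chi=1_G$ in \cref{ssec:1.5}\cref{eq:tk0} gives $H^k(M(\CA))^G=\bigoplus_{T\in\CT_k}K_T^G$, and since $\cd T=\cd X_T=k$ for every $T\in\CT_k$, while $K_T^G\neq 0$ exactly when $T\in\CT_G$, this collapses to the single formula
\[
  P(\CA,G;t)=\sum_{T\in\CT_G}\big(\dim K_T^G\big)\,t^{\cd T}.
\]
Thus the corollary reduces to extracting $\CT_G$, the codimensions $\cd T$, and the multiplicities $\dim K_T^G$ from the three regimes of \cref{thm:3}, summing, and checking that the result is one of the four advertised shapes. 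No further input is needed; the work is entirely organizational.

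First I would dispose of the rank-two groups using \cref{thm:3}\cref{it:thm31}. Here $\CT_0=\{A_0\}$ and $\CT_1$ indexes $\CA/G$ (cf.~\cref{ssec:2.14}), so $\dim H^0=1$, $\dim H^1=|\CT_1|$, and $\dim H^2=|\CA/G|-1=|\CT_1|-1$. Reading $|\CT_1|\in\{1,2,3\}$ off the three blocks of \cref{tab:thm31} produces $P=1+t$, $P=1+2t+t^2$, and $P=1+3t+2t^2$ respectively. These are precisely the $n=2$ specializations of the shapes in parts (1), (2) and (3) (which coincide at $n=2$), and (4); I would then record that the group lists in the three blocks of \cref{tab:thm31} match the $\rk G=2$ members of the corresponding corollary cases.

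Next I would treat the infinite family with $n>2$ by summing the columns of \cref{tab:thm32}, recalling from \cref{ssec:ng} that $\CA=\CA_n^0(r)$ corresponds to $p=r$ while $\CA=\CA_n(r)$ corresponds to $p<r$. For $\CA_n(r)$ every degree $0\leq k\leq n-2$ contributes total multiplicity $2$, so the ``else'' row gives $P=1+2t+\dots+2t^{n-1}+t^n$ (part (3)), whereas in the ``$p,n$ even'' row the two top columns are bumped, giving $P=1+2t+\dots+2t^{n-2}+3t^{n-1}+2t^n$ (part (4)). For $\CA_n^0(r)$ only the degrees $0,1,n-1,n$ survive, yielding $P=1+t$ in the ``else'' row (part (1)) and $P=1+t+t^{n-1}+t^n$ in the ``$p,n$ even'' row (part (2)). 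I would then translate the dichotomy ``$p,n$ even'' versus ``else'', together with $p=r$ versus $p<r$, into the parity conditions of the corollary.

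Finally, the exceptional groups of rank $>2$ require only reading the Poincar\'e polynomial directly from the last column of \cref{tab:thm33}, whose three horizontal blocks realize patterns (1), (2), and (3); here $G_{26}$ and $F_4=G_{28}$ both fall under part (3), for $n=3$ and $n=4$ respectively, and no exceptional of rank $>2$ realizes pattern (4). The one point demanding care, and the main potential obstacle, is the label bookkeeping that reconciles the Shephard--Todd parameters $(r,p,n)$ used in the case lists with the rank appearing in the polynomial shapes: in particular $G(1,1,n)=A_{n-1}$ has parameter $n$ exceeding its rank by one, so it must be placed through the $\CA_n^0$ column of \cref{tab:thm32}, and for $n=2$ the shapes of parts (2) and (3) coincide as $1+2t+t^2$, so assignment of a rank-two group to case (2) or (3) is governed solely by whether $p=r$ or $p<r$ and does not affect the stated polynomial.
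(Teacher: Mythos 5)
Your proposal is correct and is essentially the paper's own derivation: the paper offers no argument for \cref{cor:1} beyond the remark that the polynomials ``can be computed using \cref{ssec:1.5}\cref{eq:tk0} and \cref{thm:3},'' and your reduction to
\[
P(\CA,G;t)=\sum_{T\in\CT_G}\big(\dim K_T^G\big)\,t^{\cd T},
\]
followed by the column sums of \cref{tab:thm31}, \cref{tab:thm32}, and \cref{tab:thm33}, is exactly that computation. Your matching of the parity/arrangement regimes ($p=r$ versus $p<r$ via \cref{ssec:ng}, ``$p,n$ even'' versus ``else'') to the four patterns, the placement of $G_{26}$ and $G_{28}=F_4$ under pattern (3), and the observation that patterns (2) and (3) coincide at $n=2$ all agree with the paper.

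Two small points. First, case (1) of the corollary also lists the rank-one cyclic groups $G(r,p,1)$ with $p<r$, which \cref{thm:3} (stated only for $\rk G\geq 2$) does not cover; your plan, which claims to reduce everything to the three regimes of \cref{thm:3}, therefore misses these, and they require the separate (trivial) observation of \cref{ssec:rk1} that $G_{r,1}^p$ acts trivially on $H^*(M(\CA))=H^*(\BBC^*)$, giving $P=1+t$. Second, in your summation for the ``else'' row of $\CA_n(r)$, the degrees contributing total multiplicity $2$ are $1\leq k\leq n-1$, not $0\leq k\leq n-2$ (degree $0$ contributes only $A_0$); the polynomial you then wrote is nevertheless the correct one, so this is only a slip of indices, not a gap in the argument.
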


%%%%%%%%%%%%%%%%%%%%%%%%%%%%%%%%%%%%%%%%%%%%%%%%%%%%%%%%%%%%%%%%%%%%%%
\subsection{}
If $n=2$, then $1+t+t^{n-1}+t^n = 1+2t+t^2$. Hence the formula in the second
statement in the corollary applies to the groups $G(r,p,2)$ with $p$ odd and
$p<r$, $G_5$, $G_6$, $G_9$, $G_{10}$, $G_{13}$, $G_{14}$, $G_{17}$, $G_{18}$,
and $G_{21}$ as well.

The next corollary is the analog of \cref{cor:1} for the reflection pairs
$(\SA,G(r,p,n))$, where $\SA\in \{\SA_n^0(r), \SA_n(r)\}$.

%%%%%%%%%%%%%%%%%%%%%%%%%%%%%%%%%%%%%%%%%%%%%%%%%%%%%%%%%%%%%%%%%%%%%%
\begin{corollary}\label{cor:2}
  Suppose $n$, $r$, and $p$ are positive integers such that $p$ divides $r$
  and $n\geq 2$. Then
  \begin{align*}
    P(\SA_n^0(r) , G(r,p,n); t)%%
    &= \begin{cases}
      1+t &\text{if $p$ or $n$ is odd,} \\
      1+t + t^{n-1} +t^n&\text{if $p$ and $n$ are even,} \\
    \end{cases} \\ \intertext{and} P(\SA_n(r) , G(r,p,n); t) %%
    &= \begin{cases} 1+2t + \dotsm + 2t^{n-2} + 2t^{n-1} +t^n
      &\text{if $p$ or $n$ is odd,} \\
      1+2t + \dotsm + 2t^{n-2} + 3t^{n-1} +2t^n %
      &\text{if $p$ and $n$ are even.} \\
    \end{cases} \\
  \end{align*}
\end{corollary}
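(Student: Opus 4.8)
The plan is to derive both Poincaré polynomials directly from the dimension data tabulated in \cref{thm:3}\cref{it:thm32}, assembling them degree by degree via the decomposition $H^*(M(\CA))^G=\bigoplus_{T\in\CT}K_T^G$ from \cref{ssec:1.5}\cref{eq:tk0}. Recall that
\[
  P(\CA,G;t)=\sum_{k\geq 0}\dim\!\big(H^k(M(\CA))^G\big)\,t^k
  =\sum_{T\in\CT_G}(\dim K_T^G)\,t^{\cd T},
\]
so that the coefficient of $t^k$ is the sum of $\dim K_T^G$ over those $T\in\CT_G$ with $\cd T=\rk Z_T=k$. Since \cref{thm:3}\cref{it:thm32} applies for $n\geq 3$ (equivalently $\rk G\geq 3$), I would first treat that range and dispose of the boundary case $n=2$ separately.

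First I would read off the $n\geq 3$ case. For a fixed degree $k$, the table lists exactly which reflection types $T\in\CT_G$ satisfy $\cd T=k$ together with $\dim K_T^G$, distinguishing the two arrangements $\CA_n(r)$ and $\CA_n^0(r)$ and the parity condition ``$p,n$ even'' versus ``else.'' For $\CA_n(r)$ the relevant types split as follows: in degree $0$ the type $A_0$ contributes $1$; in degree $1$ both $A_1$ (partition $(2\,1^{n-2})$) and $G_{r,1}^p$ (partition $(1^{n-1})$) contribute, giving coefficient $2$; in each middle degree $2\leq k\leq n-2$ the two types $G_{r,k-1}^pA_1$ and $G_{r,k}^p$ each contribute $1$, giving coefficient $2$; in degree $n-1$ the type $G_{r,n-2}^pA_1$ contributes $\dim K^G$ (which is $2$ when $p,n$ are both even and $1$ otherwise) while $G_{r,n-1}^p$ contributes $1$; and in degree $n$ the type $G_{r,n}^p$ contributes $2$ or $1$ according to the same parity split. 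Summing these columns gives precisely
\[
  P(\CA_n(r),G_{r,n}^p;t)=1+2t+\dotsm+2t^{n-2}+2t^{n-1}+t^n
\]
in the ``else'' case and the same polynomial with the top two coefficients raised by one, namely $1+2t+\dotsm+2t^{n-2}+3t^{n-1}+2t^n$, in the ``$p,n$ even'' case. For $\CA_n^0(r)$ the table shows that all middle entries are blank (those types are not in $\CT_G$), leaving only $A_0$ in degree $0$, $A_1$ in degree $1$, and, in the ``$p,n$ even'' case, the two types in degrees $n-1$ and $n$; this yields $1+t$ in the ``else'' case and $1+t+t^{n-1}+t^n$ when $p$ and $n$ are both even.

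The only genuine subtlety, and the step I expect to require care, is reconciling the parity condition stated in the corollary (``$p$ or $n$ odd'' versus ``$p$ and $n$ even'') with the table's condition (``$p,n$ even'' versus ``else''): these are logical complements, so the two formulations agree, but one must check that ``else'' in the table is exactly the negation ``$p$ or $n$ odd.'' Finally I would handle $n=2$, which falls outside \cref{thm:3}\cref{it:thm32} and must instead be extracted from \cref{thm:3}\cref{it:thm31} together with \cref{tab:thm31}. Here $\rk G=2$, so the claimed formulas specialize to $1+t$, $1+2t+t^2$, $1+2t+t^2$, and $1+2t+2t^2$ respectively; these are read off from the $P(t)$ headers of \cref{tab:thm31} by matching the groups $G_{r,2}^p$ to their rows (for instance $G_{e,2}^e$ and $G_{r,2}^o$ under $1+2t+t^2$, and $G_{r,2}^e$ under $1+3t+2t^2$), using $\dim H^2=|\CA/G|-1$ from \cref{thm:3}\cref{it:thm31} to pin down the top coefficient. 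Once both regimes are verified and the parity cases are aligned, the corollary follows.
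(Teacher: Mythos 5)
Your overall strategy is exactly the paper's: \cref{cor:2} is presented there as an immediate consequence of the decomposition \cref{ssec:1.5}\cref{eq:tk0} together with \cref{thm:3}, and your degree-by-degree reading of \cref{tab:thm32} for $n\geq 3$ is correct in every column, including the observation that the table's ``else'' row is precisely the complement of ``$p$ and $n$ even.'' Splitting off $n=2$ and appealing to \cref{thm:3}\cref{it:thm31} is also the right (and necessary) move, since \cref{tab:thm32} only covers $n\geq 3$.

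However, your $n=2$ verification as written contains a concrete error in the last case. For the pair $(\CA_2(r), G_{r,2}^p)$ with $p$ even, the corollary's formula $1+2t+\dotsm+2t^{n-2}+3t^{n-1}+2t^n$ specializes at $n=2$ to $1+3t+2t^2$ (constant term $1$, coefficient $3$ in degree $n-1=1$, coefficient $2$ in degree $n=2$), not to $1+2t+2t^2$ as you claim. Your list of specializations is in fact internally inconsistent with your own citation of \cref{tab:thm31}: you correctly place $G_{r,2}^e$ under the header $P(t)=1+3t+2t^2$, and indeed \cref{thm:3}\cref{it:thm31} gives $\dim H^1 = |\CA/G| = 3$ and $\dim H^2 = |\CA/G|-1 = 2$ here, because $G_{r,2}^p$ with $p$ even has three orbits on $\CA_2(r)$: the pair $\{H_1,H_2\}$ of coordinate hyperplanes forms one orbit, and $\CA_2^0(r)$ splits into two orbits exactly when $p$ is even. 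Note also that $1+2t+2t^2$ could never be such a Poincar\'e polynomial: by \cref{ssec:cpp} (a consequence of \cref{pro:pp}) the alternating sum of the coefficients must vanish, and $1-2+2=1\neq 0$, whereas $1-3+2=0$. With this one specialization corrected, the $n=2$ case matches \cref{tab:thm31} in all four instances and your proof is complete.
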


Another consequence of the theorem is the curious dichotomy implicit in the
next corollary.

\begin{corollary}
  Suppose $(\SA,G)$ is an irreducible reflection pair with rank at least
  two. Then the following are equivalent:
  \begin{enumerate}
  \item $H^{\rk G}(M(\SA))^G = 0$, that is $(\SA,G)$ does not have top degree
    invariants.
  \item $P(\SA, G;t) = 1+t$.
  \end{enumerate}
\end{corollary}

%%%%%%%%%%%%%%%%%%%%%%%%%%%%%%%%%%%%%%%%%%%%%%%%%%%%%%%%%%%%%%%%%%%%%%
%%%%%%%%%%%%% \S4 Proof of \cref{thm:3}
%%%%%%%%%%%%%%%%%%%%%%%%%%%%%%%%%%%%%%%%%%%%%%%%%%%%%%%%%%%%%%%%%%%%%%
\section{Proof of \cref{thm:3}}\label{sec:prf} 

In this section we prove \cref{thm:3}, first for primitive reflection pairs,
then for imprimitive pairs. The proof for very exceptional reflection pairs
uses the results of \cref{thm:4} (for primitive and imprimitive reflection
pairs) and is given in \cref{ssec:thm3ve}.

%%%%%%%%%%%%%%%%%%%%%%%%%%%%%%%%%%%%%%%%%%%%%%%%%%%%%%%%%%%%%%%%%%%%%% 
\subsection{Preparing for the proof}\label{ssec:prep} 

Before giving the proof of \cref{thm:3} we state some preliminary results and
work out an example. In the proof of \cref{thm:3}\cref{it:thm33} extensive
use is made of the presentations given by Brou\'e, Malle, and Rouquier. As a
matter of terminology, the diagrams in \cite[Tab.~1-4]
{brouemallerouquier:complex} that encode these presentations are called
\emph{BMR diagrams}.

The proofs of the next two lemmas are immediate. 

%%%%%%%%%%%%%%%%%%%%%%%%%%%%%%%%%%%%%%%%%%%%%%%%%%%%%%%%%%%%%%%%%%%%%% 
\begin{lemma}
  \label{lem:nott1}
  Suppose that $(\SA, G)$ and $(\SA', G')$ are reflection pairs. If $(\SA, G)$
  does not have top degree invariants, then neither does $(\SA\times \SA',
  G\times G')$.
\end{lemma}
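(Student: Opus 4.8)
The plan is to read the statement off the K\"unneth isomorphism \cref{ssec:rp}\cref{eq:ku}, the only real work being to phrase ``high degree invariants'' as the nonvanishing of a single, top, cohomological degree. Write $n=\rk\CA$ and $n'=\rk\CA'$. Recall from \cref{ssec:2.14} that for the top index $G\in\CT$ one has $K_G=H^{\rk\CA}(M(\CA))$ and $K_G^G=H^{\rk\CA}(M(\CA))^G$; the operative meaning of ``$(\CA,G)$ has high degree invariants'' is that this top invariant space $K_G^G$ is nonzero, equivalently that the top index lies in $\CT_G$. Thus the hypothesis reads $H^{n}(M(\CA))^G=0$, and the goal is $H^{n+n'}(M(\CA\times\CA'))^{G\times G'}=0$. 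Here I use that $M(\CA\times\CA')=M(\CA)\times M(\CA')$ and that the center of a product arrangement is the product of the centers, so that $\rk(\CA\times\CA')=n+n'$.

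First I would insert the grading into the K\"unneth isomorphism. Since every $X\in L(\CA)$ contains the center $C_\CA$, one has $\cd X\leq\rk\CA$, so $L(\CA)_k=\emptyset$ and hence $H^k(M(\CA))=0$ for $k>n$ by Brieskorn's Lemma \cref{eq:br}; likewise $H^k(M(\CA'))=0$ for $k>n'$. Consequently the only summand of $H^{n+n'}(M(\CA\times\CA'))$ that survives on the right-hand side of \cref{ssec:rp}\cref{eq:ku} is the extreme one, giving
\[
  H^{n+n'}(M(\CA\times\CA'))\;\cong\;H^{n}(M(\CA))\otimes H^{n'}(M(\CA')).
\]
Because \cref{ssec:rp}\cref{eq:ku} intertwines the action of $G\times G'$ on the left with the external tensor-product action on the right, and the averaging idempotents of \cref{ssec:1.5} satisfy $e_{G\times G'}=e_G\otimes e_{G'}$, invariants of an external tensor product split as $(U_1\otimes U_2)^{G_1\times G_2}=U_1^{G_1}\otimes U_2^{G_2}$. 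Applying $e_{G\times G'}$ therefore yields
\[
  H^{n+n'}(M(\CA\times\CA'))^{G\times G'}\;\cong\;H^{n}(M(\CA))^G\otimes H^{n'}(M(\CA'))^{G'}.
\]

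Now the hypothesis $H^{n}(M(\CA))^G=0$ forces the right-hand side to vanish, so $(\CA\times\CA',G\times G')$ has no top-degree invariant and hence no high degree invariants, as claimed. The argument is short, and the one step that requires care --- the place where a naive reading would break down --- is the reformulation in the first paragraph: high degree invariants must be detected in the single top degree $\rk\CA$ (nonvanishing of $K_G^G$), not merely as the existence of some invariant in degree $\geq 2$. These two formulations coincide for the irreducible pairs treated in \cref{thm:3}, but for a product they may differ, and it is exactly the top-degree version that is multiplicative under the tensor product above; casting the hypothesis and the conclusion in that form is what makes the lemma an immediate consequence of \cref{ssec:rp}\cref{eq:ku}.
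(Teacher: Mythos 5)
Your proof is correct and is exactly the argument the paper has in mind: the paper gives no written proof of \cref{lem:nott1}, calling it an immediate consequence of the K\"unneth isomorphism \cref{ssec:rp}\cref{eq:ku}, and your argument---only the extreme K\"unneth summand survives in degree $\rk\CA+\rk\CA'$, and taking invariants factors it as $H^{\rk\CA}(M(\CA))^G\otimes H^{\rk\CA'}(M(\CA'))^{G'}$---is precisely that consequence written out. Your closing caveat is moreover a necessity rather than a pedantic aside: under the literal definition $\CT_G\ne\CT_0\cup\CT_1$ the lemma is false (the reflection pair $(\BR_2\times\BR_2,\,W_2\times W_2)$ carries the nonzero degree-two invariant $h_{1,2}h_{3,4}$, spanning $K_{A_1}^{W_2}\otimes K_{A_1}^{W_2}$, although neither rank-one factor has any orbit of codimension at least two), so the top-degree reading $K_G^G=H^{\rk\CA}(M(\CA))^G\ne0$---which agrees with the literal definition for the irreducible pairs of \cref{thm:3} and is the form actually invoked in \cref{ssec:sym} and \cref{ssec:exc}---is the only one under which the statement can hold, and it is the one your argument establishes.
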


%%%%%%%%%%%%%%%%%%%%%%%%%%%%%%%%%%%%%%%%%%%%%%%%%%%%%%%%%%%%%%%%%%%%%% 

\begin{lemma}\label{lem:nott2}
  If $(\SA, G)$ is an arrangement-group pair, $X\in L(\SA)$, and there is a
  $g\in N_G(X)$ that acts on $H^{\cd X} ( M(\SA_{X})) ^{Z_G(X)}$ as $-1$, then
  $H^{\cd X} ( M(\SA_{X}) )^{N_G(X)} =0$.
\end{lemma}

In practice, the element $g$ frequently arises as either the long word, or an
analog of the long word (in the non-Coxeter case), in a subgroup that contains
$Z_G(X)$ that is suggested by the BMR diagram of $G$.

%%%%%%%%%%%%%%%%%%%%%%%%%%%%%%%%%%%%%%%%%%%%%%%%%%%%%%%%%%%%%%%%%%%%%% 
\subsection{}

Suppose $(\SA,G)$ is an arrangement-group pair with underlying vector space
$V$ such that $Z(G)$, the center of $G$, acts on $V$ as scalars. Then $Z(G)$
acts trivially on both $\SA$ and $H^*(M(\SA))$, and $Z(G)\subseteq N_G(X)$ for
$X\in L(\SA)$.

\begin{lemma}\label{lem:zfac}
  Suppose $(\SA,G)$ is an arrangement-group pair such that $Z(G)$ acts on $V$
  as scalars and $X\in L(\SA)$. If $N_G(X) = Z_G(X)\cdot Z(G)$, then $N_G(X)$
  acts trivially on $H^{\cd X}(M(\SA_X)) ^{Z_G(X)}$. In particular, if $X\ne
  0$ and $|G|= |Z_G(X)| \cdot |Z(G)|\cdot |G X|$, then $N_G(X)$ acts trivially
  on $H^{\cd X} ( M(\SA_{X})) ^{Z_G(X)}$.
\end{lemma}

\begin{proof}
  The first assertion is clear. With the hypotheses in the second assertion,
  $Z_G(X)$ intersects $Z(G)$ trivially, so it follows from the equality $|G|=
  |Z_G(X)| \cdot |Z(G)|\cdot |G X|$ that $N_G(X) = Z_G(X)\cdot Z(G)$.
\end{proof}

%%%%%%%%%%%%%%%%%%%%%%%%%%%%%%%%%%%%%%%%%%%%%%%%%%%%%%%%%%%%%%%%%%%%%%
\subsection{Example: the group $\bm {G_{32}}$} \label{ssec:G32}

To illustrate the ideas in the proof of \cref{thm:3}, in this subsection we
prove the theorem for the reflection pair $(\SG_{32}, G_{32})$, assuming that
the theorem has been proved for groups with rank less than four. Set
\[
  G=G_{32}, \quad \SG=\SA(G)=\SG_{32}, \quad\text{and}\quad \CT= \CT (\SG, G).
\]
The orbits in $L(\SG)/G$ are indexed by reflection types, and as in
\cref{ssec:pot},
\[
  \CT = \{ A_0, \mu_3 ,G_{4}, \mu_3^2, \mu_3G_4, G_{25}, G_{32} \} .
\]
By \cref{pro:bries}\cref{it:bries3} it is sufficient to compute $\dim H^{\rk
  T}(M(\SG_{X_T}))^{N(T)}$, for $T\in \CT$, where to minimize parentheses and
subscripts we have set $N(T)=N_G(X_T)$. This computation is done recursively
by rank, building up from $\CT_0$ and $\CT_1$.

It was observed in \cref{ssec:tdi} that $\CT_0 \cup \CT_1 \subseteq
\CT(\SG,G)^{\tdi}$. Because $\CT_1 =\{\mu_3\}$, $G$ acts transitively on $\SG$,
and so $\dim H^{\rk T}(M(\SG_{X_T}))^{N(T)}=1$ for $T\in \CT_0 \cup \CT_1$.

We are assuming that we have already shown that the reflection pair $(\SG_4,
G_4)$ does not have top degree invariants, so $G_4\notin \CT^{\tdi}$. It
follows from \cref{lem:nott1} that $\mu_3G_4\notin \CT^{\tdi}$.

Consider the orbit indexed by the reflection type $T=\mu_3^2$. Referring to
the BMR diagram in \cite[Tab.~1]{brouemallerouquier:complex} for $G_{32}$,
define
\[
  X_T= \Fix(\langle s,u \rangle) \quad\text{and}\quad g=(stu)^2.
\]
Then $Z_G(X_T)= \langle s,u \rangle$ and by \cref{pro:b2}, $H^2(M(\SG_{X_T}
))^{Z_G(X_T)}$ is one-dimensional with basis $\{h_sh_u\}$.  Using the fact
that $g^2$ is a generator of the center of $G_{25}$ (see the row indexed by
$G_{25}$ in \cite[Tab.~1] {brouemallerouquier:complex}), it is straightforward
to check that conjugation by $g$ interchanges $s$ and $u$, so $g$ acts as $-1$
on $H^2(M(\SG_{X_T} ))^{Z_G(X_T)}$, and so $H^2(M(\SG_{X_T}) )^{N(T)}=0$ by
\cref{lem:nott2}. Therefore $\mu_3^2\notin \CT^{\tdi}$.

We are assuming that we have already shown that the reflection pair
$(\SG_{25}, G_{25})$ does not have high degree invariants, so $G_{25}\notin
\CT^{\tdi}$.

Finally, set $Z_0=Z_G(X_{A_0})$, $N_0=N_G(X_{A_0})$, $Z_1=Z_G(X_{\mu_3})$, and
$N_1=N_G(X_{\mu_3})$. Then by \cref{ssec:cpp}\cref{eq:cn},
\begin{align*}
  0 & =  \dim H^{0}(M(\SA(Z_{0})))^{N_{0}} -\dim H^{1}(M(\SA(Z_{1})))^{N_{1}}+
      (-1)^{4} \dim H^4(M(\SG))^G\\
    & =  1-1+ \dim H^4(M(\SG))^G.
\end{align*}
Therefore, $H^4(M(\SG))^G= 0$. It follows that $\CT^{\tdi}=\{A_0, \mu_3\}$ and
$\dim H^{\rk T}(M(\SG_{X_T}) )^{N(T)}=1$ for $T\in \CT^{\tdi}$.

%%%%%%%%%%%%%%%%%%%%%%%%%%%%%%%%%%%%%%%%%%%%%%%%%%%%%%%%%%%%%%%%%%%%%%
\subsection{Primitive reflection pairs}\label{ssec:exc}

In this subsection $(\SG,G)$ is a primitive reflection pair. Thus $G$ is an
exceptional complex reflection group with rank at least three, $\SG=\SA(G)$,
and $\CT(\SG,G)$ indexes $L(\SG)/G$, as described in \cref{ssec:pot}. To
simplify the notation and help keep the subscripts under control, set
\[
  \CT=\CT(\SG,G) \quad \CX=\CX(\SG,G)= \{\, X_T\mid T\in \CT\,\}, 
  \quad\text{and}\quad \SG_T=\SG_{X_T}= \SA(Z_G(X_T)) .
\]
We compute $\dim H^{\rk T} (M(\SG_T))^{N_T}$ for $T\in \CT$ recursively,
following the same line of reasoning as for $G_{32}$ in \cref{ssec:G32}.

To begin, as noted in \cref{ssec:tdi}, $\CT_0 \cup \CT_1\subseteq \CT^{\tdi}$
and $\dim H^{\rk T} (M(\SG_T))^{N_T}=1$ for $T\in \CT_0 \cup \CT_1$.

Next consider orbits indexed by reflection types $T$ with $1< \rk T <\rk G$.
Again, it turns out that most of the time it follows from \cref{lem:nott1}
that $\dim H^{\rk T} (M(\SG_{T})) ^{N_T} =0$.  \Cref{tab:3} contains the pairs
$(G,T)$ where $G$ is one of the groups under consideration and $T$ is the
reflection type indexing an orbit of $G$ in $L(\SG)$ to which
\cref{lem:nott1} does not apply. In all but four of these cases, the dimension
of $H^{\rk T} (M(\SG_{T})) ^{N_T}$ can be determined using \cref{lem:nott2} or
\cref{lem:zfac}. The entries in the table should be interpreted as follows:
\begin{itemize}
\item Dots are included to improve readability.
\item The reflection type $T=A_1^k$ denotes $k$ copies of $A_1$ for some
  $k\geq 2$. As indicated by ``$-1$'', \cref{lem:nott2} applies in all cases.
\item An entry of ``$-1$'' indicates that \cref{lem:nott2} applies, so $\dim
  H^{\rk T} (M(\SG_{T})) ^{N_T} =0$.
\item An entry of ``$1$'' indicates that \cref{lem:zfac} applies and $N_T$
  acts on $H^{\rk T} (M(\SG_{T})) ^{Z_T}$ trivially. In all cases it turns
  out, by recursion, that $\dim H^{\rk T} (M(\SG_{T})) ^{N_T} =1$.
\item An entry of ``$1b$'' indicates that the corresponding parabolic subgroup
  is ``bulky.'' It follows from \cite{pfeifferroehrle:special} that for these
  two pairs, $N_T$ centralizes $Z_T$, and so by recursion,
  \[
    \dim H^{\rk T} (M(\SG_{T})) ^{N_T} =\dim H^{\rk T} (M(\SG_{T})) ^{Z_T} =
    \dim H^{\rk T} (M(\SG_T)) ^{Z_T} =1.
  \]
\item The entry ``\cref{ssec:g31}'' for the pair $(G_{31}, G(4,2,2))$ is a
  reference to that subsection, where it is shown that $N_T$ acts on $H^{2}
  (M(\SG_{T})) ^{Z_T}$ as the symmetric group $W_3$ acting on its reflection
  representation, and so $\dim H^{2} (M(\SG_{T})) ^{N_T}=0$.

\item The entry ``\cref{ssec:g33}'' for the pair $(G_{33}, D_4)$, is a
  reference to that subsection, where it is shown that $N_T$ acts trivially on
  $H^{4} (M(\SG_{T})) ^{Z_T}$, and so by recursion we have $\dim H^{4}
  (M(\SG_{T})) ^{N_T}=1$.
\end{itemize}

Finally, $\dim H^{\rk G}(M(\SG))^G$ can be computed in each case from the rows
in \cref{tab:3} using the values of $|\SG/G|$ given in the tables in
\cite[App.~C]{orlikterao:arrangements} and \cref{ssec:cpp}\cref{eq:cn}. For
example, for the group $G=G_{34}$ we have $|\SG/G|=1$ and $1-1+1-\dim H^{\rk
  G}(M(\SG))^G=0$, so $\dim H^{\rk G}(M(\SG))^G =1$.

%%%%%%%%%%%%%%%%%%%%%%%%%%%%%%%%%%%%%%%%%%%%%%%%%%%%%%%%%%%%%%%%%%%%%%
\begin{table}[htb] \tiny
  \renewcommand{\arraystretch}{1.2}
  \centering
  \caption{Orbit reflection types not covered by \cref{lem:nott1}}
  \begin{tabular} {>{$}l<{$} @{\hspace {2em}}>{$}c<{$} >{$}c<{$} >{$}c<{$}
      >{$}c<{$} >{$}c<{$} >{$}c<{$} >{$}c<{$} >{$}c<{$} >{$}c<{$} >{$}c<{$}
      >{$}c<{$} >{$}c<{$} >{$}c<{$} >{$}c<{$} >{$}c<{$} >{$}c<{$}}
    \toprule \addlinespace
    G& \multicolumn{16}{c}{$T$} \\ \cmidrule(lr{2.25em}){1-1}
    \cmidrule{2-17} %    
     &A_1^k &A_1\tilde A_1%
            &A_1\mu_3 &\mu_3^2%
                      &B_2 & G_{3,1,2}%
                           & G_{4,2,2} &B_3 %
                                       &\mu_3 &G_{4,2,3}%
                                              &H_3&D_4&G_{33}&A_1D_4&D_6
                                                                    &E_7 \\ 
    \addlinespace \toprule  \addlinespace 
    H_3&-1&.&.&.&.&.&.&.&.&.&.&.&.&.&.&. \\    
    G_{24}&.&.&.&.&1 &.&.&.&.&.&.&.&.&.&.&. \\
    G_{25}&.&.&.&-1&.&.&.&.&.&.&.&.&.&.&.&. \\
    G_{26}&.&.&1&.&.&1&.&.&.&.&.&.&.&.&.&. \\
    G_{27}&.&.&.&.&1&.&.&.&.&.&.&.&.&.&.&. \\
    F_4&-1&1b&.&.&1b&.&.&1&1&.&.&.&.&.&.&. \\
    G_{29}&-1&.&.&.&-1&.&.&1&.&.&.&.&.&.&.&.\\
    H_4&-1&.&.&.&.&.&.&.&.&.&1&.&.&.&.&. \\
    G_{31}&-1&.&.&.&.&.&\text{\cref{ssec:g31}}&.&.&1&.&.&.&.&.&. \\
    G_{32}&.&.&.&-1&.&.&.&.&.&.&.&.&.&.&.&. \\
    G_{33}&-1&.&.&.&.&.&.&.&.&.&.&\text{\cref{ssec:g33}}&.&.&.&. \\
    G_{34}&-1&.&.&.&.&.&.&.&.&.&.&-1&1&.&.&.\\
    E_6&-1&.&.&.&.&.&.&.&.&.&.&-1&.&.&.&. \\
    E_7&-1&.&.&.&.&.&.&.&.&.&.&-1&.&-1&1&. \\
    E_8&-1&.&.&.&.&.&.&.&.&.&.&-1&.&-1&-1&1 \\
    \addlinespace \bottomrule \addlinespace
  \end{tabular}
  \label{tab:3}
\end{table}
%%%%%%%%%%%%%%%%%%%%%%%%%%%%%%%%%%%%%%%%%%%%%%%%%%%%%%%%%%%%%%%%%%%%%%

%%%%%%%%%%%%%%%%%%%%%%%%%%%%%%%%%%%%%%%%%%%%%%%%%%%%%%%%%%%%%%%%%%%%%%
\subsection{The orbit with reflection type $\bm{G(4,2,2)}$ in
  $\bm{G_{31}}$} \label{ssec:g31}

In this subsection $G=G_{31}$, $\SG=\SG_{31}$, $T=G(4,2,2)$, and we sketch the
calculation that $H^2(M(\SG_{T}))^{N_{T}} = 0$.

Referring to the BMR diagram in \cite[Tab.~3]{brouemallerouquier:complex} for
$G_{31}$, take
\[
  Z_T= \langle s,t,u\rangle
\]
so $Z_T$ is a reflection group with reflection type $G(4,2,2)$.

We need a description of $H^2(M(\SG_{T}))$. To that end, set
\[
  s'= tst=usu,\quad t'=sts=utu,\quad\text{and}\quad u'=tut=sus.
\]
Then
\[
  \SG_{T}= \{H_s, H_t, H_u, H_{s'}, H_{t'}, H_{u'} \},
\]
where $H_x=\Fix(x)$, and $\{ h_sh_{s'}, h_sh_{t}, h_sh_{t'}, h_sh_{u},
h_sh_{u'}\}$ is a basis of $H^2(M(\SG_{T}))$.

Define
\[
  z=stu,\quad w_1= svtvsv\quad\text{and}\quad w_2 =uwtwuw,
\]
where $w$ is as in the BMR diagram in
\cite[Tab.~3]{brouemallerouquier:complex} for $G_{31}$.  Proofs of the
following assertions are straightforward and are omitted.
\begin{itemize}
\item The center of $Z_T$ is a cyclic group of order $4$ generated by $z$, and
  $\{ 1, s, t, u\}$ is a cross section of $\langle z\rangle$ in $Z_T$.
\item The set $\{e_{Z_T}\cdot h_th_s, e_{Z_T}\cdot h_th_u \}$ is a basis of
  $H^2(M(\SG_{T}))^{Z_T}$.
\item Conjugation by $w_1$ maps $s$ to $tz^2$, $t$ to $s$, and fixes
  $u$. Hence $w_1 \cdot h_s=h_t$, $w_1 \cdot h_t=h_s$, and $w_1 \cdot
  h_u=h_u$. Similarly, $w_2 \cdot h_u=h_t$, $w_2 \cdot h_t=h_u$, and $w_2
  \cdot h_s=h_s$.
\item The subgroup of $N_T/Z_T$ generated by $w_1Z_T$ and $w_2Z_T$ is
  isomorphic to the symmetric group $W_3$ and acts on $H^2(M(\SG_{T}))^{Z_T}$
  as the reflection representation of $W_3$.
\end{itemize}

It follows from the last assertion that $H^2(M(\SG_{T}))^{N_T} =0$, as
claimed.

%%%%%%%%%%%%%%%%%%%%%%%%%%%%%%%%%%%%%%%%%%%%%%%%%%%%%%%%%%%%%%%%%%%%%%
\subsection{The orbit with reflection type $D_4$ in $G_{33}$} \label{ssec:g33}

In this subsection $G=G_{33}$, $\SG=\SG_{33}$, $T=D_4$, and we sketch an
argument that shows that $N_T$ acts trivially on $H^4(M(\SG_{T}))^{Z_{T}}$.

Referring to the first BMR diagram in
\cite[Tab.~4]{brouemallerouquier:complex} for $G_{33}$, define
\[
  z_1= stwtst, \quad z_2= wuvuwu, \quad\text{and} \quad g=swvz_2z_1.
\]
Then conjugation by $z_1$ interchanges $s$ and $w$ and fixes $t$, and
conjugation by $z_2$ interchanges $w$ and $v$ and fixes $u$.  Therefore,
conjugation by $g$ maps $s$ to $v$, $v$ to $w$, and $w$ to $s$.

Now set $r=tut$. It is straightforward to check that the subgroup generated by
$s$, $w$, $v$, and $r$ is a Coxeter group of type $D_4$ with $\{s,w, v,r\}$ as
a Coxeter generating set. Take
\[
  Z_{T} =\langle s,w,v,r\rangle.
\]
A short calculation using the relation $utwutw=wutwut$ shows that $grg\inverse
= r$. Hence, $g\in N_{T}$ and conjugation by $g$ acts on $Z_{T}$ as the
triality automorphism of $D_4$ with respect to the Coxeter generating set
$\{s,w,v,r\}$. It is straightforward to check that $|\langle g\rangle| = 6 =
|N_{T}:Z_{T}|$ and that $\langle g\rangle\cap Z_{T}$ is the trivial subgroup,
so $\langle g\rangle$ is a complement to $Z_{T}$ in $N_{T}$.

It follows from \cite{felderveselov:coxeter} and the isomorphism
\[
  H^4(M(\SA_4^0(2)))^{D_4} \cong H^4(M(\SG_{T}))^{Z_{T}}
\]
that $H^4(M(\SG_{T}))^{Z_{T}}$ is one-dimensional with basis vector $e_{Z_T}
h_sh_wh_vh_r$. Then
\[
  ge_{Z_T}\cdot h_sh_wh_vh_r = e_{Z_T} g\cdot h_sh_wh_vh_r= e_{Z_T} \cdot
  h_vh_sh_wh_r= e_{Z_T}\cdot h_sh_wh_vh_r,
\]
and it follows that $N_{T}$ acts trivially on $H^4(M(\SG_{T}))^{Z_{T}}$, as
asserted.

This completes the proof of \cref{thm:3}\cref{it:thm33} for primitive
reflection pairs.

%%%%%%%%%%%%%%%%%%%%%%%%%%%%%%%%%%%%%%%%%%%%%%%%%%%%%%%%%%%%%%%%%%%%%% 
\subsection{Imprimitive reflection pairs}\label{ssec:grpn}

We now turn to the proof of \cref{thm:3}\cref{it:thm32}. To continue we need
to establish some more notation related to the groups $G(r,p,n)$ and the
arrangements $\SA_n^0(r)$ and $\SA_n(r)$. When considering one of these
arrangements, the underlying vector space is always $V=\BBC^n$ and unless
otherwise noted, $r,n\geq 2$.

For $1\leq i\leq n$ let $x_i\in V^*$ denote projection on the $i\th$
coordinate. Suppose $1\leq i \neq j\leq n$ and $\zeta\in \mu_r$. Define
hyperplanes
\[
  H_{i}=\ker x_i= \{\, v\in V \mid v_i=0\,\} \quad\text{and}\quad
  H_{ij}(\zeta)=\ker (x_i-\zeta x_j)= \{\, v\in V\mid v_i= \zeta v_j\,\},
\]
where $v_k=x_k(v)$. Then
\[
  \SA_{n}^0(r)= \{\, H_{ij} (\zeta) \mid 1\leq i< j\leq n,\ \zeta\in \mu_r
  \,\} =\SA(G(r,r,n))
\]
and
\[
  \SA_{n}(r)=  \SA_{n}^0(r)
  \amalg \{\, H_i\mid 1\leq i\leq n\,\}= \SA(G(r,p,n)) 
\]
when $p<r$. By convention, $\SA_{1}^0(r) =\emptyset$ is the empty
arrangement. A special case is $r=1$: $\SA_n^0(1)=\SB_n = \SA(W_n)$ is the
$n\th$ braid arrangement.

Let $D_{r,n}$ denote the subgroup of $\GL_n(\BBC)$ consisting of diagonal
matrices with entries in $\mu_r$. The determinant map restricts to a
surjective group homomorphism from $D_{r,n}$ to $\mu_r$. If $p$ is a divisor
of $r$, define $D(r,p,n)$ to be the preimage of the subgroup $\mu_{r/p}$. Then
$G(r,p,n)=W_nD(r,p,n)$. With this notation we have $W_n =G(1,1,n) \subseteq
G(r,p,n)$.

%%%%%%%%%%%%%%%%%%%%%%%%%%%%%%%%%%%%%%%%%%%%%%%%%%%%%%%%%%%%%%%%%%%%%%
\subsection{Orbit representatives, centralizers, and
  subarrangements} \label{ssec:orb}

Suppose $n\geq 2$ and let $\lambda=(\ell_1, \dots, \ell_a)$ be a partition of
$m$ for an integer $m$ with $0\leq m\leq n$. Set $\bar \ell_0=n-m$ and for
$i>0$ let $\bar \ell_i=n-m+\ell_1+ \dots + \ell_i$ denote the sum of $n-m$ and
the $i\th$ partial sum of $\lambda$. Let $\{e_1, \dots, e_n\}$ be the standard
basis of $\BBC^n$ and for $1\leq i\leq a$ define $b_i= e_{\bar \ell_{i-1}+1} +
\dots +e_{\bar \ell_i}$. Finally, define
\[
  X_\lambda= \spn \{\, b_1, \dots, b_a\,\}.
\]
Then $X_\lambda$ is an $a$-dimensional subspace of $\BBC^n$. Notice that
$X_{\emptyset}=0$ and $X_{(1^n)} =V$, and that $\cd X_\lambda = n-a$.

For a positive integer $p$, define
\[
\delta_{\lambda,p} = \gcd(p, \ell_1, \dots, \ell_a)
\]
and define $d_0\in \GL(\BBC^n)$ to be the diagonal matrix with entries
$\omega_r, 1, \dots, 1$.

It follows from the computations in \cite[\S6.4]{orlikterao:arrangements} that
\[
  \CX\big(\SA_n^0(r), G(r,p,n)\big)= \big\{\, X_\lambda \mid \lambda \vdash
  m,\ 0\leq m\leq n-2\,\big\} \amalg \big\{\, d_0^u X_\lambda \mid \lambda
  \vdash n,\ 0\leq u< \delta_{p,\lambda} \,\big\}
\]
and
\[
  \CX\big(\SA_n(r), G(r,p,n)\big)= \big\{\, X_\lambda \mid \lambda \vdash m,\
  0\leq m\leq n-1\,\big\} \amalg \big\{\, d_0^u X_\lambda \mid \lambda \vdash
  n,\ 0\leq u< \delta_{p,\lambda} \,\big\}
\]
are sets of orbit representatives for the action of $G(r,p,n)$ on $\SA_n^0(r)$
and $\SA_n(r)$, respectively.

%%%%%%%%%%%%%%%%%%%%%%%%%%%%%%%%%%%%%%%%%%%%%%%%%%%%%%%%%%%%%%%%%%%%%%
\subsection{} \label{ssec:xz}

With $\lambda =(\ell_1, \dots, \ell_a)$ as above, it is straightforward to
check that $Z_{G(r,p,n)}(X_\lambda)$ is the ``diagonal'' subgroup of
$G(r,p,n)$ isomorphic to $G(r,p,n-m) \times W_{\ell_1} \times \dots \times
W_{\ell_a}$, consisting of block diagonal matrices with blocks $g$, $w_1$,
\dots, $w_a$, where $g\in G(r,p,n-m)$ and $w_i\in W_{\ell_i}$. Then
\begin{equation}
  \label{eq:zlamfac}
  Z_{G(r,p,n)}(X_\lambda) = G(r,p,n-m) \boxtimes W_{\ell_1} \boxtimes \cdots
  \boxtimes W_{\ell_a} \subseteq G(r,p,n), 
\end{equation}
where $\boxtimes$ denotes block diagonal direct sum. In particular,
$Z_{G(r,p,n)} (X_\lambda)$ is a reflection subgroup of $\GL(\BBC^n)$.

It is also straightforward to check that
\begin{equation*}
  \label{eq:a0lamfac}
  \SA_n^0(r)_{X_\lambda}\cong \SA_{n-m}^0(r)\times \SB_{\ell_1} \times \dotsm
  \times \SB_{\ell_a}
\end{equation*}
and
\begin{equation}
  \label{eq:anlamfac}
  \SA_n(r)_{X_\lambda} \cong \SA_{n-m}(r)\times \SB_{\ell_1} \times \dotsm
  \times \SB_{\ell_a} .
\end{equation}

Recall that $W_1$ is the trivial group acting on $\BBC$ and that
$\SB_1=\emptyset$. Thus, if $\lambda$ has $b$ parts of size greater than $1$,
then the reflection type of $Z_{G(r,p,n)}(X_\lambda)$ is $G(r,p,n-m)
A_{\ell_1-1} \cdots A_{\ell_b-1}$.

%%%%%%%%%%%%%%%%%%%%%%%%%%%%%%%%%%%%%%%%%%%%%%%%%%%%%%%%%%%%%%%%%%%%%%
\subsection{The reflection pairs $\bm{(\SA_n(r), G(r,p,n))}$}

In the rest of this section we take
\[
  G=G(r,p,n), \quad \SA=\SA_n(r), \quad\text{and}\quad \CX=\CX(\SA, G),
\]
where $p\leq r$ and $r,n\geq 2$. To prove \cref{thm:3}\cref{it:thm32} for the
reflection pair $(\SA,G)$ we need to compute $\dim H^{\cd X} (M(\SA_{X}
))^{N_G(X)}$ for $X\in \CX$. The proof is by induction on $n$. The induction
begins with the base cases $n=2$ and $n=3$. The case $n=2$ follows from
\cref{pro:b2}. Details of the argument when $n=3$ are left to the reader. For
the inductive step, we assume that $n\geq 4$, and so $G(r,p,n-2)$ has rank at
least two.

The strategy is to first show that $\dim H^{\cd X} (M(\SA_{X} ))^{N_G(X)}=0$
unless $X=X_\lambda$, where $\lambda$ is either the empty partition,
corresponding to the reflection type $G$, or a partition with largest part at
most $2$, and at most one part equal to $2$. For these latter partitions,
$\dim H^{\cd X_\lambda} (M(\SA_{X_\lambda} ))^{N_G(X_\lambda)}$ is computed
using induction and some elbow grease. Finally, once $\dim H^{\cd X}
(M(\SA_{X} ))^{N_G(X)}$ has been computed for $X\ne X_\emptyset$ (or $T\ne
G$), then $\dim H^{\cd X_{\emptyset}} (M(\SA_{X_{\emptyset}}
))^{N_G(X_\emptyset)} =\dim H^{n}(M(\SA))^{G}$ can be determined using
\cref{ssec:cpp} \cref{eq:cn}.

The proof of \cref{thm:3}\cref{it:thm32} when $\SA=\SA_n^0(r)$ is similar, but
uses no new ideas and is technically less complicated, so is omitted.

%%%%%%%%%%%%%%%%%%%%%%%%%%%%%%%%%%%%%%%%%%%%%%%%%%%%%%%%%%%%%%%%%%%%%%
\subsection{}
First notice that if $\lambda$ is a partition of $n$ and $d$ is a diagonal
matrix in $G(r,1,n)$, then $\dim H^{\cd dX_\lambda} (M(\SA_{dX_\lambda}
))^{N_G(dX_\lambda)}= \dim H^{\cd X_\lambda} (M(\SA_{X_\lambda}
))^{N_G(X_\lambda)}$, so it is sufficient to compute $\dim H^{\cd X_\lambda}
(M(\SA_{X_\lambda} ))^{N_G(X_\lambda)}$ for $X_\lambda\in \CX$.

%%%%%%%%%%%%%%%%%%%%%%%%%%%%%%%%%%%%%%%%%%%%%%%%%%%%%%%%%%%%%%%%%%%%%%
\subsection{}\label{ssec:aaa}
Let $\lambda=(\ell_1, \dots, \ell_a)$ be a partition of $m$, with $0\leq m\leq
n$, that has $b$ parts of size greater than $1$. Set
\[
  V_0= \spn\{e_1, \dots, e_{n-a+b}\}\quad\text{and}\quad V_1=
  \spn\{e_{n-a+b+1}, \dots, e_{n}\}
\]
and let $p\colon \BBC^n\to V_0$ be the projection. Then
\[
  V_0 =\BBC^{n-m} \times \BBC^{\ell_1} \times \dotsm\times \BBC^{\ell_b},
  \quad p(\SA_{X_\lambda}) = \SA_{n-m}(r) \times \SB_{\ell_1} \times \dotsm
  \times \SB_{\ell_b},
\]
and
\[
  p(Z_{G} (X_\lambda))= G(r,p,n-m) \times W_{\ell_1} \times \dotsm \times
  W_{\ell_b} ,
\]
where as in \cref{ssec:ess} $p(Z_{G} (X_\lambda))$ denotes the image of $Z_{G}
(X_\lambda)$ in $\GL(V_0)$. Also as in \cref{ssec:ess}, the K\"unneth Theorem
induces an isomorphism
\begin{multline}
  \label{eq:3g}
  H^{n-a}(M(\SA_{X_\lambda})) ^{Z_G(X_\lambda)} \\
  \cong H^{n-m}(M( \SA_{n-m}(r))) ^{G(r,p,n-m)} \otimes H^{\ell_1-1} (M(
  \SB_{\ell_1})) ^{W_{\ell_1}} \otimes \dotsm \otimes H^{\ell_b-1} (M(
  \SB_{\ell_b})) ^{W_{\ell_b}}.
\end{multline}

%%%%%%%%%%%%%%%%%%%%%%%%%%%%%%%%%%%%%%%%%%%%%%%%%%%%%%%%%%%%%%%%%%%%%%
\begin{lemma}
  If $H^{\cd X_\lambda} (M(\SA_{X_\lambda} ))^{N_G(X_\lambda)} \ne 0$, then
  either $\lambda=\emptyset$ or $\lambda =(2^b, 1^{a-b})$, where $b\leq 1$ and
  $a+b=m$.
\end{lemma}

\begin{proof}
  Brieskorn \cite{brieskorn:tresses} has shown that $P(\SB_{\ell}, W_{\ell};
  t)=1+t$ for $\ell\geq2$. Thus, if $\ell_1>2$, then $(\SB_{\ell_1},
  W_{\ell_1})$ does not have top degree invariants, so it follows from
  \cref{ssec:aaa}\cref{eq:3g} and \cref{lem:nott1} that
  $H^{n-a}(M(\SA_{X_\lambda})) ^{Z_G(X_\lambda)}=0$. Therefore, if $\dim
  H^{\cd X_\lambda} (M(\SA_{X_\lambda} ))^{N_G(X_\lambda)} \ne 0$, then
  $\lambda =(2^b, 1^{a-b})$, where $a+b=m$.

  Suppose $\lambda =(2^b, 1^{a-b})$ with $a+b=m$ and $b>1$. For the moment,
  let $s_i\in W_n$ denote the reflection that interchanges the basis vectors
  $e_i$ and $e_{i+1}$ and define $\Htilde_1=\Fix(s_{n-m+1})$,
  $\Htilde_3=\Fix(s_{n-m+3})$, and $g$ to be the longest element in the
  Coxeter group of type $A_3$ generated by $\{s_{n-m+1}, s_{n-m+2}, s_{n-m+3}
  \}$. Then $\Htilde_1, \Htilde_3\in \SA_{X_\lambda}$, $g\in
  N_{G}(X_\lambda)$, and $g$ acts on $\SA_{X_\lambda}$ by interchanging
  $\Htilde_1$ and $\Htilde_3$, and fixing $\SA_{X_\lambda} \setminus
  \{\Htilde_1,\Htilde_3\}$ elementwise. By \cref{eq:3g}, $H^{\cd X_\lambda}
  (M(\SA_{X_\lambda} ))^{N_G(X_\lambda)}$ is spanned by products of the form
  $x \htilde_1 \htilde_3 y$, where $x\in H^{n-m}(M( \SA_{n-m}(r)))
  ^{G(r,p,n-m)}$, $\htilde_1$ and $\htilde_3$ are the Orlik-Solomon generators
  corresponding to $\Htilde_1$ and $\Htilde_3$, respectively, and $y\in H^{1}
  (M(\SB_{1}))^{\otimes (a-b)}$, where we have used the identification
  \[
    Z_{G}(X_\lambda) = G(r,p,n-m) \boxtimes\, \underbrace{W_{2} \boxtimes
      \cdots \boxtimes W_2}_{b}\, \boxtimes\, \underbrace{W_1\boxtimes \cdots
      \boxtimes W_1}_{a-b} \subseteq G
  \]
  in \cref{ssec:xz}\cref{eq:zlamfac}. Then $g\cdot x \htilde_1 \htilde_3 y = x
  \htilde_3 \htilde_1 y = -x \htilde_1 \htilde_3 y$, and so $H^{\cd X_\lambda}
  (M(\SA_{X_\lambda} ))^{N_G(X_\lambda)} = 0$, by \cref{lem:nott2}.
\end{proof}

%%%%%%%%%%%%%%%%%%%%%%%%%%%%%%%%%%%%%%%%%%%%%%%%%%%%%%%%%%%%%%%%%%%%%%
\subsection{}\label{ssec:et}

In a further effort to keep levels of subscripts under control, in the rest of
this section, for a partition, $\lambda$, set
\[
  \cd \lambda= \cd X_\lambda \quad Z(\lambda)=Z_G(X_\lambda),
  \quad\text{and}\quad N(\lambda)=N_G(X_\lambda).
\]

There are $2n-1$ non-empty partitions of the form $(2^b\, 1^{a-b})$ with
$0\leq a+b\leq n$, and $b \le 1$, namely
\[
  \eta_k = (1^{n-k}),\text{ for $0\leq k\leq n-1$,}\quad\text{and} \quad
  \tau_k =(2\, 1^{n-k-1}) ,\text{ for $1\leq k\leq n-1$,}
\]
where the notation is chosen so that $\cd \eta_k=\cd \tau_k=k$.

Clearly, the only partitions of $n$ in this collection are $\eta_0$ and
$\tau_1$. Since $\delta_{p,\eta_0} = \delta_{p,\tau_1} =1$, each of $\eta_0$
and $\tau_1$ indexes a unique orbit in $L(\SA)$.

Next, corresponding to codimension $0$ and $1$, there are three special cases:
$\eta_0=(1^n)$, $\eta_1=(1^{n-1})$, and $\tau_1=(2\, 1^{n-2})$.
\begin{itemize}
\item $X_{\eta_0}=X_{(1^n)} = V$, $Z(\eta_0)$ is the trivial subgroup, with
  reflection type $A_0$, and we have seen that $\dim H^0(M(\SA_V)) =1$.
\item $X_{\eta_1}=X_{(1^{n-1})}$ is the hyperplane $H_n\in \BBC^n$, $Z(\eta_1)
  \cong G(r,p,1)$ and has reflection type $\mu_p$ if $p<r$ and $A_0$ if $p=r$,
  and clearly $\dim H^1(M(\SA_{H_n}))^{Z(\eta_1)} =1$.

  When $p=r$, the reflection type $A_0$ indexes two orbits, namely those
  indexed by $\eta_0=(1^n)$ and $\eta_1=(1^{n-1})$. In order to distinguish
  these two orbits we denote the reflection type of $Z_{G(r,r,n)}
  (X_{(1^{n-1})})$ by $G(r,r,1)$.
\item $X_{\tau_1}=X_{(2\, 1^{n-2})}$ is the hyperplane $H_{1,2}\subset
  \BBC^n$, $Z(\tau_1)$ has reflection type $A_1$, and it follows from
  \cref{ssec:ess} that $\dim H^1(M(\SA_{H_{1,2}}))^{Z(\tau_1)}=1$.
\end{itemize}
For $k>1$, the reflection type of the pointwise stabilizer $Z(\eta_k)$ is
$G(r,p,k)$ and the reflection type of the pointwise stabilizer $Z(\tau_k)$ is
$G(r,p,k-1) A_1$.

%%%%%%%%%%%%%%%%%%%%%%%%%%%%%%%%%%%%%%%%%%%%%%%%%%%%%%%%%%%%%%%%%%%%%%
\begin{lemma}\label{lem:a0}
  Suppose that $2\leq k\leq n-1$.
  \begin{enumerate}
  \item \label{it:lema01} $H^{k}( M(\SA_{\eta_k} ))^{N(\eta_k)} \cong H^{k} (
    M(\SA_k(r)))^{G(r,1,k)}$.
  \item \label{it:lema02} If $k<n-1$, then $H^{k}( M(\SA_{\tau_k}
    ))^{N(\tau_k)} \cong H^{k-1} ( M(\SA_{k-1}^r))^{G(r,1,k-1)} \otimes H^{1}
    (M(\SB_2))$.
  \item \label{it:xx} If $k=n-1$, then
    \begin{align*}
      H^{n-1}( M(\SA_{\tau_{n-1}} ))^{N(\tau_{n-1})} %
      & = H^{n-1}( M(\SA_{\tau_{n-1}} ))^{Z(\tau_{n-1})}\\
      & \cong H^{n-2} (M(\SA_{n-2}(r) ))^{G(r, p,n-2)}  \otimes H^{1}
        (M(\SB_2)).  
    \end{align*}
  \end{enumerate}
\end{lemma}

\begin{proof}
  The proof of \cref{it:lema01} is similar to the proof of \cref{it:lema02}
  and is omitted. We prove \cref{it:lema02} and \cref{it:xx}.

  Set $\tau=\tau_k$. In coordinates, $X_{\tau}$ is the set of all vectors in
  $\BBC^n$ of the form $[0\, a\, a\, v]^t$, where $0\in \BBC^{k-1}$, $a\in
  \BBC$, $v\in \BBC^{n-k-1}$, and the superscript $t$ denotes transpose.

  It follows from \cref{ssec:xz}\cref{eq:zlamfac} that
  \begin{itemize}
  \item $Z(\tau) = G(r,p,k-1) \boxtimes W_2$ is the group of block diagonal
    matrices $\left[ \begin{smallmatrix} g_{k-1} && \\  &w_2& \\
        &&I_{n-k-1} \end{smallmatrix} \right]$, where $g_{k-1}\in G(r,p,k-1)$,
    $w_2\in W_2$, and $I_{n-k-1}$ is the identity matrix, and
  \item $N(\tau)$ is the group of block diagonal matrices
    $\left[ \begin{smallmatrix} w_{k-1}d && \\ & w_2e & \\ 
        && w_{n-k-1}f \end{smallmatrix} \right]$, where $w_j\in W_j$, $d\in
    D_{r,k-1}$, $e$ is a scalar matrix in $D_{r,2}$, $f\in D_{r,n-k-1}$, and
    $\det d \cdot \det e \cdot \det f \in \mu_{r/p}$.
  \end{itemize}
  
  For any divisor $q$ of $r$, let
  \[
    Z^q=Z_{G(r,q,n)} (X_\tau)\quad \text{and}\quad N^q=N_{G(r,q,n)} (X_\tau) ,
  \]
  be the pointwise and setwise stabilizers of $X_\tau$ in $G(r,q,n)$,
  respectively.

  By \cref{ssec:xz}\cref{eq:anlamfac}, $\SA_{X_\tau} \cong \SA_{k-1}(r) \times
  \SB_2$, and taking $p=1$, it follows from the K\"unnneth Theorem that there
  is an isomorphism
  \begin{equation}
    \label{eq:3}
    H^{k-1} (M( \SA_{k-1}(r))) \otimes H^1(M(\SB_2)) \xrightarrow{\ \cong\ }
    H^k(M( \SA_{X_\tau}))    
  \end{equation}
  that intertwines the $G(r,1,k-1)\times W_2$-action on the left with the
  $N^1$-action on the right. Hence, there is a subspace, $U$, of $H^k(M(
  \SA_{X_\tau} ))$ such that $U\cong H^{k-1} (M( \SA_{k-1}(r)))$ and
  \[
    H^k(M(\SA_{X_\tau} )) = U (\BBC h_{k,k+1}),
  \]
  where the right hand side denotes the set of sums of products. If
  $g_{k-1}\in G(r,1,k-1)$, $g_{n-k-1}\in G(r,1,n-k-1)$, and $u\in U$, then
  \begin{equation}
    \label{eq:4mu}
    \left[ \begin{smallmatrix} g_{k-1} && \\ &I_2 & \\
        &&g_{n-k-1} \end{smallmatrix} \right] \cdot u\, h_{k,k+1}  =
    (g_{k-1} \cdot u)\, h_{k,k+1}.
  \end{equation}

  Suppose $k<n-1$. Then, given $g_{k-1} \in G(r,1,k-1)$, it is possible to
  find a diagonal matrix $d_{n-k-1}\in D_{r,n-k-1}$ so that
  $\left[ \begin{smallmatrix} g_{k-1} && \\ &I_2& \\
      && d_{n-k-1} \end{smallmatrix} \right] \in N^p=N(\tau)$. Hence, it
  follows from \cref{eq:3} and \cref{eq:4mu} that
  \[
    H^{k}( M(\SA_{\tau} ))^{N(\tau)} \cong H^{k-1} ( M(\SA_{k-1}(r)
    ))^{G(r,1,k-1)} \otimes H^{1} (M(\SB_2)) .
  \]
  This proves \cref{it:lema02}.
 
  Now suppose $k=n-1$. Then $\tau=(2)$ and in coordinates, $X_\tau$ is the set
  of all vectors in $\BBC^n$ of the form $[0\, a\, a]^t$, where $0\in
  \BBC^{n-2}$ and $a\in \BBC$, and
  \[
    H^{n-1}(M( \SA_{\tau})) ^{Z(\tau)} \cong H^{n-2} (M( \SA_{n-2}(r)))
    ^{G(r,p,n-2)} \otimes H^1(M(\SB_2)) .
  \]
  It is easy to check that $N(\tau)$ acts trivially on $\SA_{X_\tau}$ when
  $n=2$ and $n=3$, and so
  \[
    H^{n-1}( M( \SA_{\tau} ))^{N(\tau)} = H^{n-1}( M( \SA_{\tau} ))^{Z(\tau)}
  \]
  in these cases.

  In the rest of the proof we assume that $n\geq 4$. Define $\epsilon =1$ if
  $p$ or $n$ is odd and $\epsilon =2$ otherwise. With this notation there are
  containments
  \begin{equation}
    \label{eq:et}
    \vcenter{\vbox{\xymatrix{ Z^p \ar@{^{(}->}[r] \ar@{^{(}->}[d] &
          Z^\epsilon  \ar@{^{(}->}[d] \\
          N^p \ar@{^{(}->}[r] & N^\epsilon} }} \quad\text{and}\quad
    \vcenter{\vbox{ \xymatrix{ H^{n-1}(M(\SA_{\tau}))^{N^\epsilon}
          \ar@{^{(}->}[r] \ar@{^{(}->}[d] &
          H^{n-1}(M(\SA_{\tau}))^{Z^\epsilon} \ar@{^{(}->}[d] \\ 
          H^{n-1}(M(\SA_{\tau}))^{N^p} \ar@{^{(}->}[r] &
          H^{n-1}(M(\SA_{\tau}))^{Z^p} .} }} 
  \end{equation}

  Notice that $p$ and $n$ are both even if and only if $\epsilon$ and $n-2$
  are both even, so by induction $H^{n-1}(M(\SA_{\tau}))^{Z^\epsilon}$ and
  $H^{n-1}(M(\SA_{\tau}))^{Z^p}$ have the same dimension and hence must be
  equal. Moreover, $N^\epsilon = Z^\epsilon \cdot Z(G(r,1,n))$, and so it
  follows from \cref{lem:zfac} that $H^{n-1}(M(\SA_{\tau}))^{N^\epsilon} =
  H^{n-1} (M(\SA_{\tau}) )^{Z^\epsilon}$. This shows that all four spaces in
  the second diagram in \cref{eq:et} are equal. Therefore,
  \[
    H^{n-1}(M(\SA_{\tau}))^{N^p} = H^{n-1}(M(\SA_{\tau}))^{Z^p} \cong H^{n-2}
    (M(\SA_{n-2}(r) ))^{G(r,p,n-2)} \otimes H^{1} (M(\SB_2)) .
  \]
  This proves \cref{it:xx}.
\end{proof}

%%%%%%%%%%%%%%%%%%%%%%%%%%%%%%%%%%%%%%%%%%%%%%%%%%%%%%%%%%%%%%%%%%%%%%
\subsection{}

Now consider the entries in \cref{tab:thm32} in the rows indexed by
$\SA=\SA_n(r)$.

Suppose $k=0$ and $\eta=\eta_0$. Then we have seen that
\[
  \dim H^{0} ( M(\SA _{\eta} )) ^{N(\eta)} = \dim H^{0} ( \BBC^n) =1.
\]
This justifies the entries in the column indexed by $k=0$ in \cref{tab:thm32}.

Suppose $1\leq k\leq n-2$, $\eta=\eta_k$, and $\tau =\tau_k$. Then it follows
from \cref{lem:a0} and induction that
\begin{align*}
  &\dim H^{k}(M(\SA_{\eta}))^{N(\eta)} =\dim H^{k} (M(\SA_k(r)) )^{G(r,1,
    k)} = 1 \\ \intertext{and} 
  &\dim H^{k}(M(\SA_{\tau} ))^{N(\tau)} =\dim H^{k-1} (M(\SA_{k-1}(r))
    )^{G(r,1,k-1)} = 1 .
\end{align*}
This justifies the entries in the column indexed by $1\leq k \leq n-2$ in
\cref{tab:thm32}.

Suppose $k=n-1$, $\eta=\eta_{n-1}$ and $\tau =\tau_{n-1}$. Then it follows
from \cref{lem:a0} and induction that
\begin{align*}
  &\dim H^{n-1}(M(\SA_{\eta}))^{N(\eta)} =\dim H^{n-1} (M(\SA_{n-1}(r))
    )^{G(r,1, n-1)} = 1 \\ \intertext{and}
  &\dim H^{n-1}(M(\SA_{\tau}))^{N(\tau)} =\dim H^{n-2}
    (M(\SA_{n-2}(r)))^{G(r,p,n-2)} =
  \begin{cases}
    2&\text{if $p$ and $n$ are even,} \\
    1&\text{if $p$ or $n$ is odd,}
  \end{cases}
\end{align*}
because
\[
  \dim H^{1}(M(\SA_2(r)_{(2)}))^{N((2))} =
  \begin{cases}
    2&\text{if $p$ is even,} \\
    1&\text{if $p$ is odd.}
  \end{cases}
\]
This justifies the entries in the columns indexed by $k =n-1$ in
\cref{tab:thm32}. 

Finally, suppose $k=n$. Then it follows from \cref{ssec:cpp}\cref{eq:cn} and
what has already been proved that
\[
  \dim H^{n}(M(\SA))^{G} =
  \begin{cases}
    2&\text{if $p$ and $n$ are even,} \\
    1&\text{if $p$ or $n$ is odd.}
  \end{cases}
\]
This justifies the entries in the columns indexed by $k =n$ in
\cref{tab:thm32} and completes the proof of \cref{thm:3}\cref{it:thm32} when
$\SA=\SA_n(r)$.

%%%%%%%%%%%%%%%%%%%%%%%%%%%%%%%%%%%%%%%%%%%%%%%%%%%%%%%%%%%%%%%%%%%%%% 
%%%%%%%%%%%%% \S Basis
%%%%%%%%%%%%%%%%%%%%%%%%%%%%%%%%%%%%%%%%%%%%%%%%%%%%%%%%%%%%%%%%%%%%%%

\section{A basis of $H^*(M(\SA))^G$}\label{sec:basis}
%%%%%%%%%%%%%%%%%%%%%%%%%%%%%%%%%%%%%%%%%%%%%%%%%%%%%%%%%%%%%%%%%%%%%%

In this section we turn to the construction of a basis of $H^*(M(\SA))^G$ for
each reflection pair $(\SA,G)$. We may assume that $(\SA,G)$ is irreducible.
The strategy is to find bases of $H^{\rk T} (M(\SA_{X_T}))^{N_G(X_T)}$, for
$T\in \CT (\SA,G)^{\tdi}$, and then use the equality
\[
  H^k(M(\SA))^G =\sum_{T\in \CT(\SA,G)^{\tdi}_k}e_G\cdot H^k(M(\SA_{X_T}
  ))^{N_G(X_T)}
\]
in \cref{pro:bries}\cref{it:bries2}. Most of the time $H^{\rk T}
(M(\SA_{X_T}))^{N_T}$ is one-dimensional, so it is enough to identify a
non-zero element in $H^{\rk T} (M(\SA_{X_T}))^{N_T}$. The key technical result
we use is the acyclic complex in \cref{ssec:cont}.

%%%%%%%%%%%%%%%%%%%%%%%%%%%%%%%%%%%%%%%%%%%%%%%%%%%%%%%%%%%%%%%%%%%%%% 
\subsection{}
Suppose $(\SA,G)$ is a reflection pair with underlying vector space $V$. As
observed in \cref{ssec:tdi} that if $T=A_0$, then $\{T\}=
\CT(\SA,G)^{\tdi}_0$, $M(\SA_{X_T})=V$, $N_G(X_T)=G$, and $\{1= e_G\cdot 1\}$
is a basis of $H^0(M(\SA_{X_T}))^{N_G(X_T)}=H^0(M(\SA))^G$. Define
\[
  \cx^{\SA,G}_{T}=1 \in H^0(M(\SA_{X_T})) \quad\text{and}\quad
  B^{\SA,G}_{T}= \{ \cx^{\SA,G}_{T}\} 
\]
($\cx$ for Coxeter). Similarly, if $T=A_1$ and $H=X_T$, then $T\in
\CT(\SA,G)^{\tdi}_1$, $e_{N_G(H)}\cdot \{h\}$ is a basis of
$H^1(M(\SA_{X_T}))^{N_G(X_T)}$, and $e_Ge_{N_G(X_T)} \cdot h= e_G \cdot h$ is
a non-zero element in $H^1(M(\SA)^{G}$. Define
\[
  \cx^{\SA,G}_{T}=h \in H^1(M(\SA_{X_T})) \quad\text{and}\quad B^{\SA,G}_{T}=
  \{ \cx^{\SA,G}_{T}\} .
\]

For any $k$ it follows from \cref{pro:bries} that
\[
  H^k(M(\SA)) \cong \sum_{T\in \CT(\SA,G)^{\tdi}_k} \Big( \sum_{Y\in GX_T}
  H^{k}(M(\SA_Y)) \Big) ,
\]
that $\sum_{Y\in GX_T} H^{k}(M(\SA_Y))$ is a $\BBQ G$-submodule of
$H^{k}(M(\SA))$, isomorphic to the induced module $\Ind_{N_G(X_T)}^G
H^{k}(M(\SA_{X_T}))$, and that
\[
  \Big(\sum_{Y\in GX_T} H^{k}(M(\SA_Y)) \Big)^G= e_G \cdot H^k (M(\SA_{X_T}))
  = e_G\cdot H^{k}(M(\SA_{X_T}))^{N_G(X_T)}.
\]

Suppose for the moment that $\dim V=2$ and that $\{H_1, \dots, H_a\}$ is a set
of $G$-orbit representatives in $\SA$. If $a>1$, then $\{G\}=
\CT(\SA,G)^{\tdi}_2$. Define
\[
  \cx^{\SA,G}_{G}=h_1h_2\quad\text{and}\quad B^{\SA,G}_{G}= \{h_1h_2,\dots,
  h_1h_a \} .
\]
The next corollary follows from \cref{pro:b2}.

%%%%%%%%%%%%%%%%%%%%%%%%%%%%%%%%%%%%%%%%%%%%%%%%%%%%%%%%%%%%%%%%%%%%%% 
\begin{corollary}\label{cor:prot}
  Suppose $(\SA,G)$ is an irreducible reflection pair with $\rk \SA=2$ and set
  $\CT=\CT(\SA,G)$.
  \begin{enumerate}
  \item  For $T\in \CT^{\tdi}$, the set $e_{N_G(X_T)} \cdot
    B^{\SA,G}_T$ is a basis of $H^{\rk T}(M(\SA_{X_T} ))^{N_G(X_T)}$.
  \item For $T\in \CT^{\tdi}$, the set $e_{G} \cdot B^{\SA,G}_T$ is a basis of
    $\Big(\sum_{Y\in GX_T} H^{\rk X_T}(M(\SA_Y)) \Big)^G$.
  \item For $k\geq 0$, the disjoint union \,$\coprod_{T\in \CT^{\tdi}_k}
    e_G\cdot B^{\SA,G}_T$ is a basis of $H^{k}(M(\SA ))^{G}$.
  \end{enumerate}
\end{corollary}

The main result in this section is the analog of the statement in
\cref{cor:prot}, but without the assumption that $\rk \SA =2$. In order for
this to make sense, we need to define the sets $B^{\SA, G}_{T}$ when $(\SA,G)$
has rank greater than two and $T\in \CT(\SA,G)^{\tdi}$. It turns out that when
$\dim H^{\rk T}(M(\SA_{X_T} ))^{N_G(X_T)}=1$ we can take $B^{\SA,G}_{T}= \{
\cx^{\SA,G}_{T}\}$, where $\cx^{\SA,G}_{T} \in H^{\rk T}(M(\SA_{X_T} ))$ is a
``Coxeter-like'' cohomology class.

%%%%%%%%%%%%%%%%%%%%%%%%%%%%%%%%%%%%%%%%%%%%%%%%%%%%%%%%%%%%%%%%%%%%%% 
\subsection{}\label{ssec:cx}
Until \cref{ssec:thm3ve}, $(\SA,G)$ is an irreducible reflection pair with
rank at least three that is either primitive or imprimitive.

\begin{enumerate}
\item First suppose $(\SA,G)$ is primitive, so $\SA=\SA(G)$ and $G$ is an
  exceptional reflection group. \label{it:cxprim}

  Let $T\in \CT(\SA,G)^{\tdi}\setminus \{A_0,A_1\}$. Then $\dim H^{\rk
    T}(M(\SA_{X_T}))^{N_G(X_T)}=1$, by \cref{thm:3}\cref{it:thm33}, and so
  \[
    H^{k}(M(\SA_{X_T}))^{N_G(X_T)} \cong H^k(M(\SA(Z_G(X_T))))^{Z_G(X_T)},
  \]
  where $k=\rk T$. There is a BMR diagram for $G$, say $\CD$, such that a BMR
  diagram for $T$ appears as an admissible subdiagram of $\CD$ (see
  \cite[\S1.B] {brouemallerouquier:complex}). Let $s_1$, \dots, $s_k$ be
  generators of $G$ indexed by the nodes in the admissible subdiagram for
  $T$. For $1\leq i\leq k$ define hyperplanes $H_i=\Fix(s_i)\in \SA$ with
  Orlik-Solomon generators $h_i \in H^1(M(\SA))$. Then $Z_G(X_T)= \langle s_1,
  \dots, s_k\rangle$ by \cite[1.7(1)] {brouemallerouquier:complex}.

  If $T$ is well-generated, then $X_T= \Fix(Z_G(X_T))= H_1\cap \dotsm \cap
  H_k$. Define
  \[
    \cx^{\SA,G}_T= h_1\dotsm h_k \in H^k(M(\SA_{X_T})).
  \]
  If $T$ is not well-generated, then $G=G_{31}$ and $T\in \{G(4,2,3),
  G_{31}\}$. In this case, with the notation in \cite[Tab.~3]
  {brouemallerouquier:complex}, define
  \[
    \cx^{\SA,G}_{T}=
    \begin{cases}
      h_v h_t h_u\in H^{3} (M(\SA_{X_T}))&\text{if $T=G(4,2,3)$,} \\
      h_v h_t h_u h_w\in H^{4} (M(\SA))&\text{if $T=G_{31}$.}
    \end{cases}
  \]
  
\item \label{it:cximprim} Now suppose $(\SA,G)$ is imprimitive and $T\in
  \CT(\SA, G)^{\tdi} \setminus \{A_0, A_1\}$. Then $G=G(r,p,n)$ and there are
  two cases.

  \noindent $\bm{\SA= \SA_n(r)}$: By \cref{thm:3}\cref{it:thm32},
  \begin{align*}
    \CT(\SA, G)^{\tdi}%
    & = \{A_0, A_1\} \amalg \{\, G(r,p,k)\mid 1\leq k\leq n\,\} \\
    &\qquad\amalg \{\, G(r,p,k-1)A_1 \mid 2\leq  k\leq n-1\,\} . 
  \end{align*}
    
  With the notation in \cite[3.A] {brouemallerouquier:complex} and
  \cite[Tab.~1] {brouemallerouquier:complex}, the group $G(r,1,n)$ (denoted by
  $G(d,1,r)$ in \cite {brouemallerouquier:complex}) has generators $s$, $t_2$,
  \dots, $t_n$. These define hyperplanes $\Fix(s)$, $\Fix(t_2)$, \dots,
  $\Fix(t_n)$ in $\SA$ with corresponding Orlik-Solomon generators
  $h_\varphi$, $h_2$, \dots, $h_n$ in $H^1(M(\SA))$. Using the notation in
  \cref{ssec:rc} and \cref{ssec:grpn}, we may take $s=\varphi$ and
  $\Fix(t_j)=H_{j-1,j}(1)$ for $2\leq j\leq n$.

  If $\dim H^{\rk T}(M(\SA_{X_T} ))^{N_G(X_T)}=1$, define
  \[
    \hspace{6em} \cx^{\SA,G}_T=
    \begin{cases}
      h_\varphi h_{2}h_{3} \dotsm h_{k},&\text{if $T=G(r,p,k)$ and $1\leq
        k\leq n-1$,} \\
      h_\varphi h_{2}h_{3} \dotsm h_{k-1}h_{k+1}, &\text{if $T=G(r,p,k-1)
        A_1$ and  $2\leq k\leq n-1$,} \\
      h_\varphi h_{2}h_{3} \dotsm h_{n}, &\text{if $T=G$ and either $p$ or $n$
        is odd.}
    \end{cases}
  \]

  If $\dim H^{\rk T}(M(\SA_{X_T} ))^{N_G(X_T)}>1$, then $T\in \{G(r,p,n-2)
  A_1, G\}$ and $p$ and $n$ are even. To simplify the notation, set
  $\Ttilde=G(r,p,n-2) A_1$. Because $p$ is even, $G(r,p,n)$ is not
  well-generated unless $p=r$, which is not under consideration because
  $\SA=\SA_n(r)= \SA(G)$. Define a hyperplane $H_2'=\Fix(st_2s\inverse)\in
  \SA$, corresponding to the BMR generator $t_2'$ in \cite[Tab.~2]
  {brouemallerouquier:complex}, and then define
  \[
    \begin{aligned} \cx^{\SA,G}_{\Ttilde}&=h_\varphi h_{2}h_{3} \dotsm
      h_{n-2}h_n,%
      &\cx^{\SA,G}_{\Ttilde,1}&=h_\varphi h_2' h_{3} \dotsm h_{n-2}h_n,\\
      \cx^{\SA,G}_{G}&=h_\varphi h_{2}h_{3} \dotsm h_{n-1}h_n,%
      &\cx^{\SA,G}_{G,1}&=h_\varphi h_2' h_{3} \dotsm h_{n-1}h_n,
    \end{aligned}
  \]
  and
  \[
    B^{\SA,G}_{T}= \{ \cx^{\SA,G}_T, \cx^{\SA,G}_{T,1}\} \subseteq H^{\rk
      T}(M(\SA_{X_T}))
  \]
  for $T\in \{\Ttilde,G\}$.
  \smallskip
  
  \noindent $\bm{\SA= \SA_n^0(r)}$: By \cref{thm:3}\cref{it:thm32}, because
  $T\notin \{A_0,A_1\}$ we have that $p$ and $n$ are even, $T\in
  \{G(r,p,n-2)A_1, G\}$, and $\dim H^{\rk T} (M(\SA_{X_T} ))^{N_G(X_T)}
  =1$. With the notation just introduced, notice that the hyperplanes $H_2$,
  $H_2'$, $H_3$, \dots, $H_n$ lie in $\SA_n^0(r)$. Define
  \[
    \cx^{\SA,G}_T= \begin{cases}
      h_2h_2'h_{3} \dotsm h_{n-2}h_n,&\text{if $T=G(r,p,n-2) A_1$,} \\
      h_2h_2'h_{3} \dotsm h_{n},&\text{if $T=G$.}
    \end{cases}
  \]
\end{enumerate}

Finally, for $(\SA,G)$ primitive or imprimitive, define
\[
  B^{\SA,G}_{T}= \{ \cx^{\SA,G}_{T}\} \quad\text{for all $T$ with $\dim H^{\rk
      T} (M(\SA_{X_T} ))^{N_G(X_T)}=1$.}
\]
We can now state the main result in this section.

%%%%%%%%%%%%%%%%%%%%%%%%%%%%%%%%%%%%%%%%%%%%%%%%%%%%%%%%%%%%%%%%%%%%%% 
\begin{theorem}\label{thm:4}
  Suppose $(\SA,G)$ is an irreducible reflection pair and set
  $\CT=\CT(\SA,G)$.
  \begin{enumerate}
  \item \label{it:22t} For $T\in \CT^{\tdi}$, the set $e_{N_G(X_T)} \cdot
    B^{\SA,G}_T$ is a basis of $H^{\rk T}(M(\SA_{X_T} ))^{N_G(X_T)}$.
  \item \label{it:2at} For $T\in \CT^{\tdi}$, the set $e_{G} \cdot
    B^{\SA,G}_T$ is a basis of $\Big(\sum_{Y\in GX_T} H^{\rk X_T}(M(\SA_Y))
    \Big)^G$.
  \item \label{it:2bt} For $k\geq 0$, the disjoint union \,$\coprod_{T\in
      \CT^{\tdi}_k} e_G\cdot B^{\SA,G}_T$ is a basis of $H^{k}(M(\SA ))^{G}$.
  \end{enumerate}
\end{theorem}

The next corollary contains as a special case a conjecture of Felder and
Veselov \cite{felderveselov:coxeter} for Coxeter groups.

%%%%%%%%%%%%%%%%%%%%%%%%%%%%%%%%%%%%%%%%%%%%%%%%%%%%%%%%%%%%%%%%%%%%%% 
\begin{corollary}\label{cor:4}
  If $G$ is a well-generated complex reflection group, then
  \[
    \{\, e_G\cdot \cx^{\SA(G),G}_T \mid T\in \CT(\SA(G),G)^{\tdi}\,\}
  \]
  is a basis of $H^*(M(\SA))^G$.
\end{corollary}

%%%%%%%%%%%%%%%%%%%%%%%%%%%%%%%%%%%%%%%%%%%%%%%%%%%%%%%%%%%%%%%%%%%%%% 
\subsection{Proof of \cref{thm:4}}

The assertions in \cref{thm:4}\cref{it:2at} and \cref{thm:4}\cref{it:2bt}
follow from \cref{thm:4}\cref{it:22t} and \cref{pro:bries}.

As with \cref{thm:3}, the proof of \cref{thm:4}\cref{it:22t} is by recursion
for primitive reflection pairs, by induction on $n$ for imprimitive reflection
pairs, and case-by-case for very exceptional reflection pairs. To illustrate
the method, we provide details for the reflection pairs $(\SG_{34}, G_{34})$
and $(\SA_4(r),G(r,p,4))$ when $p$ is even. The extension to other cases is
straightforward and is omitted.

%%%%%%%%%%%%%%%%%%%%%%%%%%%%%%%%%%%%%%%%%%%%%%%%%%%%%%%%%%%%%%%%%%%%%% 
\subsection{}\label{ssec:bg34}
Consider the reflection pair $(\SG, G)=(\SG_{34}, G_{34})$. By
\cref{thm:3}\cref{it:thm33}, $\CT(\SG, G)^{\tdi}= \{A_0, A_1,G_{33}, G\}$. We
have seen that $e_{N_G(X_T)}\cdot \cx^{\SG,G}_T \ne0$ for $T\in \{A_0,
A_1\}$. In the rest of this subsection, set $T=G_{33}$.

Using the second BMR diagram for $G_{34}$ (\cite[Tab.~4]
{brouemallerouquier:complex}) we have
\[
  \cx^{\SG,G}_{G}= h_\varphi h_th_u h_vh_{w'}h_x \quad\text{and}\quad
  \cx^{\SG,G}_{T}= \cx^{\SG_{33},G_{33}}_{G_{33}}= h_\varphi h_th_u h_vh_{w'} .
\]
Assuming that we have proved the theorem for all $T\in \CT(\SG, G)\setminus
\{G\}$, we know that $e_{G_{33}} \cdot \cx^{\SG_{33}, G_{33}}_{G_{33}} \ne0$
in $H^5(M(\SG_{33}))^{G_{33}}$. Using \cref{thm:3}\cref{it:thm33} again, there
is an isomorphism
\[
  H^5(M(\SG_{33}))^{G_{33}} \cong H^5(M(\SG_{X_T}))^{Z_G(X_T)} =
  H^5(M(\SG_{X_T}))^{N_G(X_T)},
\]
that maps $e_{G_{33}} \cdot \cx^{\SG_{33},
  G_{33}}_{G_{33}} \ne0$ to $e_{N_G(X_T)}\cdot \cx^{\SG,G}_T$. Hence
$e_{N_G(X_T)}\cdot \cx^{\SG,G}_T \ne0$.

It remains to show that $e_{G}\cdot \cx^{\SG,G}_G \in H^6(M(\SG))^G$ is not
equal to zero. Applying the map $\partial$ in \cref{ssec:cont} to $e_G \cdot
\cx^{\SA,G}_{G}$ we get
\begin{multline}\label{eq:p}
  \partial( e_G\cdot h_\varphi h_th_u h_vh_{w'}h_x)= e_G\cdot h_th_u
  h_vh_{w'}h_x -e_G\cdot h_\varphi h_u h_vh_{w'}h_x \\
  +e_G\cdot h_\varphi h_t h_vh_{w'}h_x -e_G\cdot h_\varphi h_th_u h_{w'}h_x
  +e_G\cdot h_\varphi h_th_u h_vh_x -e_G\cdot h_\varphi h_th_u h_vh_{w'} .
\end{multline}

Consider the first summand, $e_G\cdot h_th_u h_vh_{w'}h_x$. Set $X= H_t\cap
H_u \cap H_v \cap H_{w'}\cap H_x$. It follows from Steinberg's Theorem that
the pointwise stabilizer of $X$ is $Z_G(X)= \langle t, u, v, w',x \rangle$,
which has reflection type $D_5$. Thus, for the reflection type $\Ttilde=D_5$
in $\CT(\SG,G)$, we may take $X_{\Ttilde}=X$. Then $e_{N_G(X_{\Ttilde})}\cdot
\cx^{\SG,G}_{\Ttilde} = e_{Z_G(X)}\cdot h_th_u h_vh_{w'}h_x \in
H^5(M(\SG_{X})) ^{Z_G(X)}$. But $H^5(M(\SG_{X}))^{Z_G(X)} \cong
H^5(M(\SA(D_5)))^{D_5}=0$, where the last equality follows from
\cref{thm:3}\cref{it:thm32}. Thus $e_{Z_G(X)}\cdot h_th_u h_vh_{w'}h_x =0$,
and hence $e_G\cdot h_th_u h_vh_{w'}h_x=0$.

Similar arguments show that all the summands in \cref{eq:p} are equal to zero
except the last. Arguing as in the preceding paragraph, with $X= H_\varphi
\cap H_t\cap H_u \cap H_v \cap H_{w'}$, we have $Z_G(X)= \langle s,t, u, v, w'
\rangle$, which has reflection type $G_{33}$. Therefore, $e_G\cdot h_\varphi
h_th_u h_vh_{w'}= e_G\cdot \cx^{\SA,G}_{G_{33}}$.

Putting the pieces together, $\partial(e_G \cdot \cx^{\SA,G}_{G}) = -e_G\cdot
\cx^{\SA,G}_{G_{33}} \ne0$. Since $\partial$ is injective, we conclude that
$e_{G}\cdot \cx^{\SA,G}_G \ne0$, as desired.

%%%%%%%%%%%%%%%%%%%%%%%%%%%%%%%%%%%%%%%%%%%%%%%%%%%%%%%%%%%%%%%%%%%%%% 
\subsection{}
Consider the reflection pair $(\SA, G)=(\SA_4(r), G(r,p,4))$, where $p$ is
even. By induction, we may assume that the conclusions of the theorem hold for
$n=2$ and $n=3$.

It follows from \cref{thm:3}\cref{it:thm32} that
\[
  \CT(\SA,G)^{\tdi}= \{A_0, A_1, G(r,p,1), G(r,p,1)A_1, G(r,p,2), G(r,p,2)A_1,
  G(r,p,3), G\} ,
\]
and that $\dim H^{\rk T}(M(\SA_{X_T} ))^{N_G(X_T)}=1$ unless $T\in \{
G(r,p,2)A_1, G\}$. For the rest of this subsection, set $\Ttilde=
G(r,p,2)A_1$. The elements $\cx^{\SA,G}_T$ and the sets $B^{\SA, G}_{\Ttilde}$
and $B^{\SA,G}_{G}$ are given in \cref{tab:thm4}.

%%%%%%%%%%%%%%%%%%%%%%%%%%%%%%%%%%%%%%%%%%%%%%%%%%%%%%%%%%%%%%%%%%%%%% 
\begin{table}[htb]
  \caption{$\cx^{\SA,G}_T$ and $B^{\SA,G}_{T}$ for $(\SA_4(r), G(r,p,4))$, $p$
    even}
  \begin{minipage}{\linewidth}
    \centering   \renewcommand{\arraystretch}{1.3}
    \begin{tabular} {>{$}l<{$} @{\hspace {1em}}>{$}c<{$} >{$}c<{$} >{$}c<{$}
        >{$}c<{$} >{$}c<{$} >{$}c<{$} >{$}c<{$} >{$}c<{$}}
      \toprule \addlinespace
      \deg& k=0 %
      & \multicolumn{2}{c}{$k=1$}%
      & \multicolumn{2}{c}{$k=2$}
      & \multicolumn{2}{c}{$k=3$} & k=4 \\  
      \cmidrule(r{1.5em}){1-1} \cmidrule{2-2} \cmidrule(lr){3-4}
      \cmidrule(lr){5-6} \cmidrule(lr){7-8} \cmidrule(lr){9-9}
      T&A_0&A_1&G_{r,p,1}%
      & G_{r,p,1} A_1  & G_{r,p,2}%
           &G_{r,p,2} A_1 & G_{r,p,3} &G_{r,p,4}\\ 
%      \addlinespace \midrule \addlinespace
%
      \cx^{\SA,G}_T& 1 & h_2%
      &h_\varphi& h_\varphi h_3 & h_\varphi h_2& .%
               &h_\varphi h_2h_3 & .\\
      \addlinespace \midrule \addlinespace
      B^{\SA,G}_{\Ttilde}%
          & \multicolumn{6}{l}{$\{ \cx^{\SA,G}_{\Ttilde},
            \cx^{\SA,G}_{\Ttilde,1}\}=\{\ h_\varphi h_{2}h_{4},\ 
            h_\varphi h_2'h_{4}\ \}$ }\\
      B^{\SA, G}_{G}& \multicolumn{7}{l}{ $\{ \cx^{\SA,G}_G,
                      \cx^{\SA,G}_{G,1}\}=\{\ h_\varphi h_{2}h_3h_{4},\ 
                      h_\varphi h_2'h_3h_{4}\ \}$ }\\
      \addlinespace \bottomrule \addlinespace
    \end{tabular}
  \end{minipage}
  \label{tab:thm4}
\end{table}
%%%%%%%%%%%%%%%%%%%%%%%%%%%%%%%%%%%%%%%%%%%%%%%%%%%%%%%%%%%%%%%%%%%%%%

We need to show that
\begin{itemize}
\item if $T\in \CT(\SA,G)^{\tdi}$ and $\dim H^{\rk T}(M(\SA_{X_T}
  ))^{N_G(X_T)} =1$, then $e_{N_G(X_T)}\cdot \cx^{\SA,G}_T \ne0$, and
\item if $T\in \{ \Ttilde, G\}$, then $e_{N_G(X_T)} \cdot B^{\SA,G}_{T}$ is a
  basis of $H^{\rk T}(M(\SA_{X_T} ))^{N_G(X_T)}$.
\end{itemize}
As in the preceding subsection, the results are known for $T\in \{A_0,A_1,
G(r,p,1)\}$ (recall from \cref{ssec:et} that $G$ has two orbits on $\SA$,
indexed by $A_1$ and $G(r,p,1)$).

Suppose that $T=G(r,p,k)$ and $k=2,3$. It was shown in
\cref{lem:a0}\cref{it:lema01} that
\begin{equation}
  \label{eq:8}
  H^{k}( M(\SA_{X_T} ))^{N_G(X_T)} \cong H^{k} ( M(\SA_k(r)))^{G(r,1,k)}
\end{equation}
by an isomorphism that is induced by the projection $\BBC^4 \to \BBC^4/X_T$
and that intertwines the $N_G(X_T)$-action on the left with the
$G(r,1,k)$-action on the right. By induction, the space $H^{k} (
M(\SA_k(r)))^{G(r,1,k)}$ is one-dimensional with basis $\{e_{G(r,1,k)} \cdot
h_\varphi\dotsm h_k\}$. It is straightforward to check that the isomorphism in
\cref{eq:8} maps $h_\varphi\dotsm h_k$ in $H^{k}( M(\SA_{X_T} ))$ to
$h_\varphi\dotsm h_k$ in $H^{k} ( M(\SA_k(r)))$. Therefore, $e_{N_G(X_T)}
\cdot h_\varphi\dotsm h_k$ maps to $e_{G(r,1,k)} \cdot h_\varphi\dotsm h_k$,
and hence $e_{N_G(X_T)} \cdot h_\varphi\dotsm h_k \ne 0$. This shows that
$e_G\cdot \cx^{\SA,G}_T \ne0$ for $T\in \{G(r,p,2), G(r,p,3)\}$. In particular
\begin{equation}
  \label{eq:11}
  \text{$e_{N_G(X_{G(r,p,3)})} \cdot \{h_\varphi h_2h_3\}$ is a basis of
    $H^{3}( M(\SA_{X_{G(r,p,3)}} ))^{N_G(X_{G(r,p,3)})}$. }
\end{equation}

Similarly, if $T=G(r,p,1) A_1$, then it follows from the isomorphism
\[
  H^{2}( M(\SA_{X_T} ))^{N_G(X_T)} \cong H^{1} ( M(\SA_{1}^r) )^{G(r,1,1)}
  \otimes H^{1} (M(\SB_2))
\]
in \cref{lem:a0}\cref{it:lema02} that $e_{N_G(X_T)}\cdot \cx^{\SA,G}_T \ne0$.

If $T=G(r,p,2) A_1$, then by \cref{lem:a0}\cref{it:xx},
\[
  H^{3}( M(\SA_{X_T} ))^{N_G(X_T)} \cong H^{2} (M(\SA_{2}(r) ))^{G(r, p,2)}
  \otimes H^{1} (M(\SB_2)) .
\]
By induction, $e_{G(r,p,2)} \cdot \{ h_\varphi h_2, h_\varphi h_2'\}$ is a
basis of $H^{2} (M(\SA_{2}(r) ))^{G(r, p,2)}$, and then it follows that
\begin{equation}
  \label{eq:12}
  \text{ $e_{N_G(X_{\Ttilde})} \cdot B^{\SA,G}_{\Ttilde} =
    e_{N_G(X_{\Ttilde})} \cdot \{h_\varphi h_2h_4, h_\varphi h_2'h_4\}$ is a
    basis of $H^{3}(M(\SA_{X_{\Ttilde}} ))^{N_G(X_{\Ttilde})}$. }
\end{equation}

Notice that by \cref{eq:11}, \cref{eq:12}, and \cref{thm:3}\cref{it:thm32},
\begin{equation}
  \label{eq:13}
  \text{$\{\, e_G\cdot h_\varphi h_2h_3,\, e_G\cdot h_\varphi h_2h_4,\,
    e_G\cdot h_\varphi h_2'h_4\,\}$ is a basis of $H^3(M(\SA))^G$.}
\end{equation}

Finally, consider $e_G\cdot B^{\SA,G}_{G}= e_G\cdot \{h_\varphi h_2h_3h_4,
h_\varphi h_2'h_3h_4\}$. One easily checks that $e_G\cdot h_\varphi h_2'h_3$
is a scalar multiple of $e_G\cdot h_\varphi h_2h_3$, say $e_G\cdot h_\varphi
h_2'h_3= \xi e_G\cdot h_\varphi h_2h_3$. Applying the map $\partial$ in
\cref{ssec:cont} and arguing as in \cref{ssec:bg34} we have
\begin{align*}
  \partial(e_G\cdot h_\varphi h_2h_3h_4)%
  &= e_G\cdot h_\varphi h_2h_4 - e_G\cdot h_\varphi h_2h_3 \\ 
  \intertext{and}
  \partial(e_G\cdot h_\varphi h_2'h_3h_4)%
  &= e_G\cdot h_\varphi h_2'h_4- e_G\cdot
    h_\varphi h_2'h_3\\
  &= e_G\cdot h_\varphi h_2'h_4- \xi e_G\cdot h_\varphi h_2h_3.
\end{align*}
Therefore, it follows from \cref{eq:13} that $\partial \big(e_G\cdot
B^{\SA,G}_{G}\big)$ is linearly independent, and so $e_G\cdot B^{\SA,G}_{G}$
is linearly independent and hence a basis of $H^{\rk G}(M(\SA))^{G}$. This
completes the proof of \cref{thm:4}.

%%%%%%%%%%%%%%%%%%%%%%%%%%%%%%%%%%%%%%%%%%%%%%%%%%%%%%%%%%%%%%%%%%%%%%
\subsection{Very exceptional reflection pairs}\label{ssec:thm3ve}

In this subsection we prove \cref{thm:3}\cref{it:thm33} for the very
exceptional reflection pairs by using \cref{thm:4} to construct an explicit
basis of $H^*(M(\SA))^G$ for each such pair.

First consider the pair $(\SA_3^0(3), \langle G(3,3,3)\sigma \rangle)$. Set
$\SA=\SA_3^0(3)$ and $G= G(3,3,3)$. Fix a hyperplane, $H$, in $\SA_3^0(3)$.
By \cref{thm:3}\cref{it:thm32}, $\{1, e_G\cdot h\}$ is a basis of
$H^*(M(\SA))^G$. Because $G$ acts transitively on $\SA$, so does $\langle
G\sigma \rangle$. It follows that $H^*(M(\SA))^G = H^*(M(\SA))^{\langle
  G\sigma \rangle}$. This justifies the entries in the row for $(\SA_3^0(3),
\langle G(3,3,3)\sigma \rangle)$ in \cref{tab:thm33}.

Next consider the pair $(\SF_4, \langle F_4 \gamma\rangle)$. With the notation
in the BMR diagram for $F_4=G_{28}$, it follows from \cref{thm:4} that
\[
  e_{F_4}\cdot \{1, h_s, h_v, h_sh_v, h_th_u, h_sh_th_u, h_th_uh_v,
  h_sh_th_uh_v\}
\]
is a basis of $H^*(M(\SF_4))^{F_4}$. Also, the linear transformation $\gamma$
acts on $\{ h_s, h_t ,h_u, h_v\}$ by interchanging $h_s$ and $h_v$ and
interchanging $h_t$ and $h_u$. Thus $\langle F_4\gamma\rangle$ acts
transitively on $\SF_4$,
\[
  \gamma\cdot h_sh_v= -h_sh_v, \quad, \gamma\cdot h_th_u= -h_th_u,
  \quad\text{and}\quad \gamma\cdot h_sh_th_u= -h_th_uh_v ,
\]
and it follows easily that
\[
  e_{\langle F_4\gamma \rangle}\cdot \{1, h_s, h_sh_th_u- h_th_uh_v,
  h_sh_th_uh_v\}
\]
is a basis of $H^*(M(\SF_4))^{\langle F_4\sigma \rangle}$. This justifies the
entries in the row for $(\SF_4, \langle F_4\gamma\rangle)$ in
\cref{tab:thm33}.

Finally, for the pair $(\SA_4^0(2), \langle D_4\tau \rangle)$, it is easy to
see that $\tau$ acts trivially on the basis of $H^*(M(\SA_4^0(2)))^{D_4}$ in
\cref{thm:4} and so $H^*(M(\SA_4^0(2)))^{\langle D_4\tau \rangle}=
H^*(M(\SA_4^0(2) ))^{D_4}$.  This justifies the entries in the row for
$(\SA_4^0(2), \langle D_4\tau \rangle)$ in \cref{tab:thm33}.

%%%%%%%%%%%%%%%%%%%%%%%%%%%%%%%%%%%%%%%%%%%%%%%%%%%%%%%%%%%%%%%%%%%%%%
%%%%%%%%%%%%% \S 
%%%%%%%%%%%%%%%%%%%%%%%%%%%%%%%%%%%%%%%%%%%%%%%%%%%%%%%%%%%%%%%%%%%%%%
\section{Application: Computing Lehrer's relative equivariant Poincar\'e
  polynomials}\label{sec:leh}

Lehrer \cite{lehrer:rationalpoints} considers the relative situation in which
$G\subseteq \Gtilde\subseteq \GL(V)$ are complex reflection groups such that
$G$ is irreducible and normal in $\Gtilde$, and $\SA=\SA(G)$ or
$\SA(\Gtilde)$. A fundamental result in \cite{lehrer:rationalpoints} is the
computation of the character of the representation of
$\Gtilde/G$ on $H^*(M(\SA))^G$. The bases of $H^*(M(\SA))^G$ described in
\cref{thm:4} can easily be used to give an alternate derivation of these
characters. Here we focus on the cases when
\begin{itemize}
\item $\SA=\SF_4$, $G=F_4$, and $\Gtilde = \langle F_4\gamma \rangle$, and
\item $\SA=\SA_n(r)$, $G=G(r,p,n)$, $\Gtilde = G(r,1,n)$, and $n\geq 3$.
\end{itemize}
The other cases are left to the reader. To simplify the notation a bit, for
$X\in L(\SA)$ set
\[
  \Ind_X^G= \sum_{Y\in GX} H^{\rk X}(M(\SA_Y)).
\]

%%%%%%%%%%%%%%%%%%%%%%%%%%%%%%%%%%%%%%%%%%%%%%%%%%%%%%%%%%%%%%%%%%%%%%
\subsection{}
The computation for $\SA=\SF_4$, $G=F_4$, and $\Gtilde = \langle F_4\gamma
\rangle$ follows from the computation in \cref{ssec:thm3ve}. We want to
compute the graded character of $\langle \gamma\rangle$ on
$H^*(M(\SF_4))^{F_4}$.

Let $\epsilon_{\langle \gamma\rangle}$ denote the non-trivial character of
$\langle \gamma\rangle$. The following statements are immediate consequences
of the computations in \cref{ssec:thm3ve}:
\begin{itemize}
\item $H^0(M(\SF_4))^{F_4}= (\Ind_{V}^{F_4})^{F_4}$, a basis is $e_{F_4}\cdot
  \{1\}$, and $\langle \gamma\rangle$ acts trivially.
\item $H^1(M(\SF_4))^{F_4}= (\Ind_{H_s}^{F_4})^{F_4}+
  \big(\Ind_{H_t}^{F_4}\big)^{F_4}$, a basis is $e_{F_4}\cdot \{h_s+h_v,
  h_s-h_v\}$, and $\langle \gamma\rangle$ acts as $1_{\langle \gamma\rangle}
  +\epsilon_{\langle \gamma\rangle}$.
\item $H^2(M(\SF_4))^{F_4}= \big(\Ind_{H_s\cap H_v}^{F_4}\big)^{F_4}+
  \big(\Ind_{H_t\cap H_u}^{F_4}\big)^{F_4}$, a basis is $e_{F_4}\cdot
  \{h_sh_v, h_th_u\}$, and $\langle \gamma\rangle$ acts as $\epsilon_{\langle
    \gamma\rangle}+\epsilon_{\langle \gamma\rangle}$.
\item $H^3(M(\SF_4))^{F_4}= \big(\Ind_{H_s\cap H_t\cap H_u}^{F_4}\big)^{F_4}+
  \big(\Ind_{H_t\cap H_u\cap H_v}^{F_4}\big)^{F_4}$, a basis is
  \[
    e_{F_4}\cdot \{h_sh_th_u+ h_th_uh_v,\, h_sh_th_u -h_th_uh_v\, \},
  \]
  and $\langle \gamma\rangle$ acts as $1_{\langle \gamma\rangle}
  +\epsilon_{\langle \gamma\rangle}$.
\item $H^4(M(\SF_4))^{F_4}= \big(\Ind_{0}^{F_4} \big)^{F_4}$ is
  one-dimensional, a basis is $e_{F_4}\cdot \{ h_sh_th_uh_v \}$, and $\langle
  \gamma\rangle$ acts as $1_{\langle \gamma\rangle}$.
\end{itemize}
With the notation in \cite{lehrer:rationalpoints}, the equivariant Poincar\'e
polynomial is
\[
  P^{\langle \gamma \rangle}(X_{F_4},t)= (1+t+t^3+t^4) 1_{\langle \gamma
    \rangle} + (t+2t^2+t^3) \epsilon_{\langle \gamma\rangle} .
\]

%%%%%%%%%%%%%%%%%%%%%%%%%%%%%%%%%%%%%%%%%%%%%%%%%%%%%%%%%%%%%%%%%%%%%%
\subsection{}
In the rest of this section consider the case when $\SA=\SA_n(r)$,
$G=G(r,p,n)$, $\Gtilde = G(r,1,n)$, and $n\geq 3$.

In general $\CT(\SA,G) \ne \CT(\SA, \Gtilde)$, but it follows from
\cref{thm:3} (see \cref{tab:thm32}) that we may canonically identify
$\CT(\SA,G)^{\tdi}$ and $\CT(\SA, \Gtilde) ^{\tdi}$ by replacing $p$ by
$1$. Moreover, we can choose a set of common orbit representatives.  For
example, if $\lambda = (1^{n-k})$, then $T=G(r,p,k)\in \CT(\SA, G)^{\tdi}$
corresponds to $\Ttilde = G(r,1,k)\in \CT(\SA, \Gtilde)^{\tdi}$, and we can
take $X_T=X_{\Ttilde}= X_\lambda$. Set
\[
  \CT^{\tdi}=\CT(\SA,G)^{\tdi}= \CT(\SA, \Gtilde)^{\tdi}.
\]

It is straightforward to check that if $T\in \CT^{\tdi}$, then $GX_T= \Gtilde
X_T$, whence
\[
  \Ind_{X_T}^\Gtilde = \sum_{Y\in \Gtilde X_T} H^{\rk T}(M(\SA_Y)) =\sum_{Y\in
    GX_T} H^{\rk T}(M(\SA_Y)) =\Ind_{X_T}^G.
\]
In particular, $\Ind_{X_T}^G$ is a $\BBQ \Gtilde$-module and
\begin{equation}\label{eq:odim}
  e_{\Gtilde} \cdot H^{\rk T}(M(\SA_{X_T})) = \big(\Ind_T^G\big)^{\Gtilde}
  \subseteq  \big(\Ind_T^{G}\big)^G= e_G \cdot H^{\rk T}(M(\SA_{X_T})) .
\end{equation}

If $r$ is even, let $\epsilon_{\Gtilde}$ denote the linear character of
$\Gtilde$ with order two that contains $W_n$ in its kernel. The next theorem
is a slight refinement of a result due to Lehrer
\cite[Thm~6.1]{lehrer:rationalpoints}.

\begin{theorem}
  Suppose $G=G(r,p,n)$, $\Gtilde= G(r,1,n)$, $\SA=\SA_n(r)$, and $T\in
  \CT^{\tdi}$. Then the character of the $\BBQ \Gtilde$-module
  $(\Ind_{X_T}^G)^G$ is equal to
  \[
    \begin{cases}
      1_{\Gtilde}+\epsilon_{\Gtilde}&\text{if $p$ and $n$ are even and $T\in
        \{G(r,p,n-2)A_1,G\}$,} \\
      1_{\Gtilde}&\text{otherwise.}
    \end{cases}
  \]
\end{theorem}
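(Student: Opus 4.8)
The plan is to exploit that $G \trianglelefteq \Gtilde$ with $\CA = \CA(G) = \CA(\Gtilde)$, so that $\Gtilde$ acts on $H^*(M(\CA))^G$ and, by conjugation, permutes the Brieskorn summands $K_T$ while preserving reflection type. First I would use \cref{thm:3}\cref{it:thm32} to identify $\CT_G$ with $\CTtilde_G$ by replacing $p$ with $1$, so that a given $T$ has a partner $\Ttilde$ with $X_T = X_{\Ttilde}$; then $\CA_T = \CA_{\Ttilde}$, $H^*(M(\CA_T)) = H^*(M(\CA_{\Ttilde}))$, and $K_{\Ttilde}^{\Gtilde} \subseteq K_T^G$. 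Since $p=1$ is odd, \cref{thm:3}\cref{it:thm32} gives $\dim K_{\Ttilde}^{\Gtilde} = 1$ for every $\Ttilde$. Hence whenever $\dim K_T^G = 1$ the inclusion is an equality $K_T^G = K_{\Ttilde}^{\Gtilde}$, which is visibly a trivial $\Gtilde$-module; this settles every $T$ except the pair $\{G_{r,n-2}^p A_1, G\}$ arising when $p$ and $n$ are both even, and yields the character $1_G$ in all such cases.

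The substance lies in the two-dimensional case, $p$ and $n$ even and $T \in \{G_{r,n-2}^p A_1, G\}$, where \cref{thm:4} furnishes the basis $\{e_G \cdot \cox^{\CA}_{G,T},\, e_G \cdot \cox^{\CA,1}_{G,T}\}$ of $K_T^G$. I would compute the action of the distinguished diagonal generating reflection $s = \diag(\omega, 1, \dots, 1)$ of $\Gtilde = G_{r,n}^1$. Using $g \cdot h_{s'} = h_{gs'g\inverse}$, one checks that $s$ fixes $h_1, h_3, \dots, h_n$ and carries $h_2 = h_{1,2}$ to $h_2^1 = h_{1,2}(\omega)$, so that $e_G \cdot \cox^{\CA,1}_{G,T} = s \cdot (e_G \cdot \cox^{\CA}_{G,T})$. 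Applying $s$ once more therefore gives $s \cdot (e_G \cdot \cox^{\CA,1}_{G,T}) = s^2 \cdot (e_G \cdot \cox^{\CA}_{G,T})$, and the whole computation collapses to the assertion that $s^2$ acts trivially on $K_T^G$.

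To obtain this I would set $G^2 = G_{r,n}^2$ and note that, because $p$ is even, $\mu_{r/p} \subseteq \mu_{r/2}$, whence $D_{r,n}^p \subseteq D_{r,n}^2$ and $G \subseteq G^2$; moreover $s^2 = \diag(\omega^2, 1, \dots, 1) \in D_{r,n}^2 \subseteq G^2$. The inclusion $G \subseteq G^2$ gives $H^*(M(\CA))^{G^2} \subseteq H^*(M(\CA))^G$, and since both $2$ and $p$ are even these spaces are equidimensional in every degree by \cref{thm:3}\cref{it:thm32}, so they coincide; thus $G^2$, and in particular $s^2$, acts trivially on $H^*(M(\CA))^G$. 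Consequently $s$ interchanges the two basis vectors, acting on $K_T^G$ with eigenvalues $+1$ and $-1$ and eigenvectors $e_G \cdot \cox^{\CA}_{G,T} \pm e_G \cdot \cox^{\CA,1}_{G,T}$. Because the image of $s$ generates the cyclic group $\Gtilde/G$, the $+1$-eigenvector spans a copy of $1_G$ and the $-1$-eigenvector a copy of $\sigma$, the order-two character trivial on $W_n$ with $\sigma(s) = -1$; this gives the character $1_G + \sigma$. The step I expect to be the crux is the passage from ``$s^2$ lies in $G^2$'' to ``$s^2$ acts trivially'': it is exactly the equidimensionality of $H^*(M(\CA))^{G^2}$ and $H^*(M(\CA))^G$, supplied by \cref{thm:3} through the parity hypothesis, that makes this work, and without the evenness of $p$ the entire mechanism breaks down.
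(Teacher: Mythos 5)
Your proposal is correct and follows essentially the same route as the paper's own proof: the identification of $\CT_G$ with $\CTtilde_G$ forcing triviality in the one-dimensional cases, the basis $e_G\cdot E^{\CA}_{G,T}$ from \cref{thm:4}, the computation $s\cdot(e_G\cdot \cox^{\CA}_{G,T}) = e_G\cdot \cox^{\CA,1}_{G,T}$, the key step that $s^2\in G^2=G_{r,n}^2$ acts trivially because $H^*(M(\CA))^{G^2}\subseteq H^*(M(\CA))^G$ with equal dimensions, and the resulting $\pm$ eigenvector basis realizing $1_G$ and $\sigma$. Your phrasing of the crux (equidimensionality in every degree via the parity statement of \cref{thm:3}, rather than the paper's terser "both spaces are two-dimensional") is if anything a slightly more careful rendering of the same argument.
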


\begin{proof}
  Clearly $H^{\rk T}(M(\SA_{X_T}) )^{N_{\Gtilde}(X_T)} \subseteq H^{\rk
    T}(M(\SA_{X_T}) )^{N_{G}(X_T)}$. There are two cases, depending on whether
  or not this containment is an equality.
  
  Suppose first that $H^{\rk T}(M(\SA_{X_T}))^{N_{\Gtilde}(X_T)} = H^{\rk
    T}(M(\SA_{X_T}))^{N_{G}(X_T)}$. Then it follows from
  \cref{thm:3}\cref{it:thm32} that $e_{\Gtilde} \cdot H^{\rk T}(M(\SA_{X_T}))$
  and $e_G \cdot H^{\rk T}(M(\SA_{X_T}))$ are both one-dimensional, so
  equality holds in \cref{eq:odim}. Therefore $\Gtilde$ acts trivially on
  $\Ind_{X_T}^G$ and so the character of the $\BBQ \Gtilde$-module
  $(\Ind_{X_T}^G)^G$ is equal to $1_{\Gtilde}$, as claimed.

  Now suppose that $\dim H^{\rk T}(M(\SA_{X_T}))^{N_{\Gtilde}(X_T)} \ne H^{\rk
    T}(M(\SA_{X_T}))^{N_{G}(X_T)}$. Then by \cref{thm:3}\cref{it:thm32}, $p$
  and $n$ are both even, $T\in \{G(r,p,n-2)A_1, G\}$, $\dim H^{\rk
    T}(M(\SA_{X_T}))^{N_{\Gtilde}(X_T)}=1$, and $H^{\rk
    T}(M(\SA_{X_T}))^{N_{G}(X_T)}=2$.

  Consider first the case when $T=G(r,p,n-2)A_1$. By
  \cref{thm:4}\cref{it:2at},
  \[
    e_G\cdot B^{\SA,G}_{T} = e_G\cdot\{ \cx^{\SA,G}_{T}, \cx^{\SA,G}_{T,1}\}=
    \{e_G\cdot h_\varphi h_2\dotsm h_{n-2}h_n, \ e_G\cdot h_\varphi h_2'\dotsm
    h_{n-2}h_n\}
  \]
  is a basis of $\Ind_{X_T}^G$. Set $G_2 =G(r,2,n)$. Then $G_2 X_{T}= GX_{T}$,
  so $\Ind_{X_T}^{G_2} = \Ind_{X_T}^{G}$. Moreover, $G\subseteq G_2$ because
  $p$ is even, so $G_2$ acts on $\Ind_{X_T}^G$ and
  $\big(\Ind_{X_T}^G\big)^{G_2} \subseteq \big(\Ind_{X_T}^G\big)^{G}$. Since
  both spaces are two-dimensional, they must be equal. Thus every element in
  $\Ind_{X_T}^G$ is $G_2$-invariant. It follows from \cref{ssec:osp2} that
  \[
    \varphi \cdot h_\varphi= h_\varphi, \quad \varphi \cdot h_2= h_2',
    \quad\text{and}\quad \varphi \cdot h_j= h_j \text{ for $3\leq j\leq n$}.
  \]
  Thus,
  \[
    \varphi e_G \cdot \cx^{\SA,G}_{T} = e_G\cdot \cx^{\SA,G}_{T,
      1}\quad\text{and}\quad \varphi e_G\cdot \cx^{\SA,G}_{T,1} = \varphi^2
    e_G \cdot \cx^{\SA,G}_{T} = e_G \cdot \cx^{\SA,G}_{T},
  \]
  where the last equality holds because $\varphi^2\in G_2$. Because the
  subgroup of $\Gtilde$ generated by $\varphi$ maps surjectively onto
  $\Gtilde/G$, we see that $e_{G} \cdot\{ \cx^{\SA,G}_{T} +
  \cx^{\SA,G}_{T,1},\, \cx^{\SA,G}_{T} - \cx^{\SA,G}_{T,1} \}$ is a basis of
  $\Ind_{T}^G$ such that
  \begin{equation}
    \label{eq:5}
    e_{G} \cdot \cx^{\SA,G}_{T}+\ e_G\cdot \cx^{\SA,G}_{T,1} \in \big(\Ind_T^G
    \big)^{\Gtilde}\quad\text{and}\quad e_{G} \cdot \cx^{\SA,G}_{T} -\
    e_G\cdot \cx^{\SA,G}_{T,1} \in \big(\Ind_T^G\big)^{\epsilon_{\Gtilde}}.
  \end{equation}
    
  A similar argument applied to $T=G$ shows that $e_{G} \cdot \{
  \cx^{\SA,G}_G+ \cx^{\SA,G}_{G},\, \cx^{\SA,G}_G- \cx^{\SA,G}_{G} \}$ is a
  basis of $H^{n}(M(\SA))^G$ such that 
  \begin{equation}
    \label{eq:6}
    e_{G} \cdot \cx^{\SA,G}_{G}+\ e_G\cdot \cx^{\SA,G}_{G,1} \in
    \big(\Ind_G^G\big)^{\Gtilde} \quad\text{and}\quad e_{G} \cdot
    \cx^{\SA,G}_{G}-\ e_G\cdot \cx^{\SA,G}_{G,1} \in
    \big(\Ind_G^G\big)^{\epsilon_{\Gtilde}} .
  \end{equation}

  It follows from \cref{eq:5} and \cref{eq:6} that the character of the $\BBQ
  \Gtilde$-module $(\Ind_{X_T}^G)^G$ is equal to $1_{\Gtilde}+
  \epsilon_{\Gtilde}$ when $p$ and $n$ are even and $T\in \{G(r,p,n-2)A_1,
  G\}$.
\end{proof}

%%%%%%%%%%%%%%%%%%%%%%%%%%%%%%%%%%%%%%%%%%%%%%%%%%%%%%%%%%%%%%%%%%%%%%
%%%%%%%%%%%%% \S Semi-invariants and determinant-like characters
%%%%%%%%%%%%%%%%%%%%%%%%%%%%%%%%%%%%%%%%%%%%%%%%%%%%%%%%%%%%%%%%%%%%%%
\section{Application: Semi-invariants and determinant-like
  characters}\label{sec:semi}

%%%%%%%%%%%%%%%%%%%%%%%%%%%%%%%%%%%%%%%%%%%%%%%%%%%%%%%%%%%%%%%%%%%%%%

For our last application, we consider the situation when $G$ is a complex
reflection group, $\SA=\SA(G)$, and we extend the underlying scalar field for
cohomology, $G$-modules, and representations from $\BBQ$ to $\BBC$.  Using the
same ideas as in the arguments above, we give a short proof of the following
vanishing theorem due to Lehrer \cite[Thm.~1.3]{lehrer:vanishing}.

\begin{theorem}\label{thm:5}
  Suppose $G\subseteq \GL(V)$ is a complex reflection group, $\SA=\SA(G)$, and
  $L$ is a complex vector space that affords a representation of $G$ such that
  $L^{Z_G(H)}=0$ for every hyperplane $H\in \SA$. Then $\Hom_G
  \big(H^*(M(\SA)), L\big)=0$.
\end{theorem}

%%%%%%%%%%%%%%%%%%%%%%%%%%%%%%%%%%%%%%%%%%%%%%%%%%%%%%%%%%%%%%%%%%%%%%
\begin{proof}
  Set $\CX=\CX(\SA,G)$. The proof is by induction on $\rk G$. We have seen in
  \cref{ssec:tdi} that if $\rk G =1$, then $G$ acts trivially on
  $H^*(M(\SA))$, and so the result holds. Suppose that $\rk G >1$ and that the
  result holds for complex reflection groups $G'$ with $\rk G'<\rk G$.
	
  By \cref{pro:bries} and Frobenius reciprocity we have
  \begin{equation}
    \label{eq:1}
    \begin{aligned}
      \Hom_G \big(H^*(M(\SA(G))), L\big)%
      &\cong \bigoplus_{X\in \CX} \Hom_G \big( \Ind_X^G, L \big)\\
      &\cong \bigoplus_{X\in \CX} \Hom_{N_G(X)} \big( H^{\cd X} ( M(\SA_{X})),
      L|_{N_G(X)} \big)\\
      &\cong \bigoplus_{X\in \CX} \Hom_{N_G(X)} \big(H^{\rk Z_G(X)} (
      M(\SA(Z_G(X)))), L|_{N_G(X)} \big).
    \end{aligned}
  \end{equation}
  The summand in \cref{eq:1} indexed by $X=V$ is $\Hom_{G} \big(H^{0}
  (M(\SA)), L \big)$, which is equal to zero by assumption. Suppose $X\in \CX$
  and $0<\cd X<\rk G$. Clearly $L^{Z_{Z_G(X)}(H)}=0$ for every hyperplane in
  $\SA(Z_G(X))$, and so by induction, $\Hom_{Z_G(X)} \big( H^{*}(
  M(\SA(Z_G(X)))), L|_{Z_G(X)} \big)=0$. Every $N_G(X)$-equivariant
  homomorphism from $H^{\rk Z_G(X)} ( M(\SA(Z_G(X))))$ to $L$ is
  $Z_G(X)$-equivariant, so
  \[
    \Hom_{N_G(X)} \big(H^{\rk Z_G(X)} ( M(\SA(Z_G(X)))), L|_{N_G(X)} \big) =0.
  \]
  Finally, the argument in \cref{ssec:cpp}\cref{eq:cn} shows that the summand
  in \cref{eq:1} indexed by $X=0$, namely $\Hom_{G} \big(H^{\rk G} ( M(\SA)),
  L \big)$, is also equal to zero. This completes the proof.
\end{proof}

A linear character $\xi$ of $G$ is called a \emph{determinant-like} character
if, for every reflection $s\in G$, $\xi(s)$ is a root of unity with order
equal to the order of $s$. Such a character satisfies the hypothesis of the
theorem. The determinant character, $\det \colon G\to \BBC^*$, is of course a
determinant-like character. If $G$ is a Coxeter group, then $\det$ is the sign
character, and is the only determinant-like character of $G$. For non-Coxeter
reflection groups, $\det\inverse$ is a determinant-like character that in
general is not equal to $\det$.

It follows from a result of Lehrer \cite{lehrer:rational} that the sign
character of a Weyl group, $W$, does not occur in $H^*(M(\SA(W)))$. The
following generalization to complex reflection groups is an immediate
consequence of \cref{thm:5}.

\begin{corollary}\label{cor:5}
  Suppose $G\subseteq \GL(V)$ is a non-trivial, complex reflection group and
  $\xi$ is a determinant-like character of $G$. Then $\xi$ does not occur in
  $H^*(M(\SA(G)))$.
\end{corollary}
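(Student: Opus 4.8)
The plan is to deduce the corollary directly from \cref{thm:5} by taking $L$ to be the one-dimensional $\BBC G$-module $\BBC_\chi$ on which $G$ acts through $\chi$. First I would recall that a linear character $\chi$ occurs in the graded $\BBQ G$-module $H^*(M(\CA(G)))$ precisely when $\chi$ appears as a constituent of the $\BBC G$-module $\BBC \otimes_\BBQ H^*(M(\CA(G)))$, which by complete reducibility and Schur's lemma is equivalent to $\Hom_G\big(H^*(M(\CA(G))), \BBC_\chi\big) \ne 0$. Thus it suffices to show that this $\Hom$-space vanishes, and for that I would verify the hypothesis \cref{eq:chi_s} of \cref{thm:5} for $L = \BBC_\chi$.

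The key step is the observation that a determinant-like character satisfies $\BBC_\chi^{Z_H} = 0$ for every hyperplane $H \in \CA(G)$. Fix such an $H$. By Steinberg's theorem the pointwise stabilizer $Z_H$ is a rank-one complex reflection group, hence cyclic, generated by a single reflection $s$ whose order $e_H$ is at least $2$. Since $\chi$ is determinant-like, $\chi(s)$ is a root of unity of order exactly $e_H$, so $\chi(s) \ne 1$; therefore the restriction $\chi|_{Z_H}$ is non-trivial and $\BBC_\chi$ contains no non-zero $Z_H$-fixed vector, i.e.\ $\BBC_\chi^{Z_H} = 0$. This is the only point requiring any argument, and it is immediate once one unwinds the definition of determinant-like together with the fact that a non-trivial reflection has order at least $2$.

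With the hypothesis verified, \cref{thm:5} applies with $L = \BBC_\chi$ and yields $\Hom_G\big(H^*(M(\CA(G))), \BBC_\chi\big) = 0$. By the reduction in the first paragraph this means $\chi$ does not occur in $H^*(M(\CA(G)))$, as claimed. The only potential subtlety is the passage from the rational module to its complexification, but since \cref{thm:5} is already stated over $\BBC$ and the notion of $\chi$ \emph{occurring} is a statement about $\BBC$-coefficients, there is no real obstacle here; the whole argument is essentially a one-line specialization of \cref{thm:5} to the one-dimensional module determined by $\chi$.
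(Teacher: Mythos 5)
Your proof is correct and is essentially the paper's own argument: the paper likewise notes that a determinant-like character satisfies hypothesis \eqref{eq:chi_s} (since the pointwise stabilizer $Z_H$ is cyclic, generated by a reflection $s$ with $\chi(s)\ne 1$) and then invokes \cref{thm:5} with $L$ the one-dimensional module afforded by $\chi$. You merely spell out the verification of \eqref{eq:chi_s} and the standard equivalence between ``$\chi$ occurs'' and the non-vanishing of $\Hom_G\big(H^*(M(\CA(G))),\BBC_\chi\big)$, both of which the paper leaves implicit.
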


%%%%%%%%%%%%%%%%%%%%%%%%%%%%%%%%%%%%%%%%%%%%%%%%%%%%%%%%%%%%%%%%%%%%%%
%%%%%%%%%%%%% Appendix
%%%%%%%%%%%%%%%%%%%%%%%%%%%%%%%%%%%%%%%%%%%%%%%%%%%%%%%%%%%%%%%%%%%%%%

\appendix
\section{Calculations in rank two}

In this appendix we collect the results for irreducible, rank two reflection
pairs. In the tables we use the notation for scalar matrices introduced in
\cref{ssec:ex}. Boldface indicates a coset representative, say $\bm{z}$, such
that $\SA(G\bm{z})= \emptyset$. For example $\bm{z_{12}}$ is the scalar matrix
with eigenvalue $\zeta_{12}= e^{2\pi i/12}$. In the row for $G=G_4$ in
\cref{tab:irp2}, $\langle G_{4}\bm{z_{12}} \rangle = G_7$ and hence
$\SA(\langle G_{4}\bm{z_{12}} \rangle )= \SG_7$, but $\SA(G_{4}\bm{z_{12}})=
\emptyset$.

%%%%%%%%%%%%%%%%%%%%%%%%%%%%%%%%%%%%%%%%%%%%%%%%%%%%%%%%%%%%%%%%%%%%%%
\subsection{}
Suppose $G\sigma$ is a rank two reflection coset in \cref{ssec:rc}. The
arrangements $\SA(\langle G\sigma z\rangle)$ and groups $\langle
G\sigma\rangle$, where $z$ is a scalar transformation and $\SA(\langle G\sigma
z\rangle) \ne \SA(G)$, are given in \cref{tab:irp2}.

%%%%%%%%%%%%%%%%%%%%%%%%%%%%%%%%%%%%%%%%%%%%%%%%%%%%%%%%%%%%%%%%%%%%%%
\begin{table}[htb]
  \renewcommand{\arraystretch}{1.2} %\centering %\small
  \caption{Arrangements $\SA(\langle G\sigma z\rangle)\ne \SA(G)$ and groups
    $\langle G\sigma \rangle$ in rank two}
  \begin{tabular} {>{$}r<{$} @{\hspace {2em}} >{$}c<{$} @{\hspace {2em}}
      >{$}c<{$} @{\hspace {2em}}>{$}l<{$} @{\hspace {3em}} l}
    \toprule \addlinespace
    G&\sigma z&\SA(\langle G\sigma z\rangle)&\langle G\sigma \rangle%
    & Notes \\ 
    \addlinespace \toprule \addlinespace
     &\varphi^p &\SA_2(r)&G_{r,p,2} &\\
     &\varphi^pz_{2r}^{-p} & \SA_2^0(2r)&G_{r,p,2}&$p$ odd\\ 
    \multirow{-3}*{$G_{r,r,2}$}%
     &\varphi^{p+1} z_{2r}\inverse%
              &\SA_2(2r)&G_{r, \gcd(r,p+1),2}&$r/p$ odd\\ 

    \addlinespace \midrule \addlinespace
     &\varphi^q &\SA_2(r)&G_{r,q,2}&\\
    \multirow{-2}*{$G_{r,p,2}$}&\varphi z_{2r}\inverse
              &\SA_2(2r)&G_{r,1,2}&\\ 
    \addlinespace \midrule \addlinespace
    G_{4,2,2}&\rho_3&\SG_6&G_6&\\
    \addlinespace \midrule \addlinespace

    G_{4}&z_3, z_4, \bm{z_{12}}&\SG_{5}, \SG_6, \SG_7%
                                            &G_4&\\
    G_{5}&z_4, \rho_4, \rho_2&\SG_{7}, \SG_{10}, \SG_{14}%
                                            &G_5,G_{10},G_{14}&\\ 
    G_{6}&z_3&\SG_{7}&G_6&\\
    G_{7}&\rho_4, \rho_2& \SG_{10}, \SG_{15}%
                                            &G_{10}, G_{15}&\\
    G_{8}&z_8, z_3, \bm{z_{24}}&\SG_{9}, \SG_{10}, \SG_{11}%
                                            &G_8&\\
    G_{9}&z_3&\SG_{11}&G_{11}&\\
    G_{10}&z_8&\SG_{11}&G_{11}&\\

    \addlinespace \midrule \addlinespace
     &z_8, \bm{z_{24}}, z_4,%
              &\SG_{9}, \SG_{11}, \SG_{13} &G_{12}& \\
    \multirow{-2}*{$G_{12}$}&z_3, \bm{z_{12}}%
              &\SG_{14}, \SG_{15} &G_{12}%
    &\multirow{-2}*{$\SG_{11}=\SG_{15}$}\\ 
    \addlinespace \midrule \addlinespace

    G_{13}&\bm{z_{8}}, \bm{z_{24}}, z_3%
              &\SG_{9}, \SG_{11},\SG_{15}&G_{13}%
    &$\SG_{13}=\SG_{9}$, $\SG_{11}=\SG_{15}$\\  
    G_{14}&z_8, z_4&\SG_{11}, \SG_{15}&G_{14}&$\SG_{11}=\SG_{15}$\\
    G_{15}&\bm{z_8}&\SG_{11}&G_{15}&$\SG_{11}=\SG_{15}$\\
%    \addlinespace \midrule \addlinespace
    G_{16}&z_4, z_3, \bm{z_{12}}&\SG_{17}, \SG_{18}, \SG_{19}%
                                            &G_{16}&\\
    G_{17}&z_3&\SG_{19}&G_{17}&\\
    G_{18}&z_4&\SG_{19}&G_{18}&\\
    G_{20}&z_5, \bm{z_{20}}, z_4&\SG_{18}, \SG_{19}, \SG_{21}%
                                            &G_{20}&\\
    G_{21}&z_5&\SG_{19}&G_{21}\\
    G_{22}&z_5, \bm{z_{15}}, z_3&\SG_{17}, \SG_{19}, \SG_{21}&G_{22}&\\
    \addlinespace \bottomrule \addlinespace
  \end{tabular}
  \label{tab:irp2}
\end{table}
%%%%%%%%%%%%%%%%%%%%%%%%%%%%%%%%%%%%%%%%%%%%%%%%%%%%%%%%%%%%%%%%%%%%%%

%%%%%%%%%%%%%%%%%%%%%%%%%%%%%%%%%%%%%%%%%%%%%%%%%%%%%%%%%%%%%%%%%%%%%%
\subsection{}

Suppose $C$ is an irreducible reflection coset with rank equal to two, $G_0= C
C\inverse$, and $G_1 = \langle C \rangle$. Then for $i,j\in \{0,1\}$, either
$H^*( M(G_i) )^{G_j} = H^*( M( \SA(G) )^{G}$, where $G$ is irreducible, or
$H^*( M(G_i) )^{G_j} = H^*( M(\SA) )^{G}$, where $(\SA, G)$ is one of the
pairs in \cref{tab:rk2}. The table also records a reflection coset, $C$,
that gives rise to the reflection pair $(\SA, G)$.

%%%%%%%%%%%%%%%%%%%%%%%%%%%%%%%%%%%%%%%%%%%%%%%%%%%%%%%%%%%%%%%%%%%%%% 
\begin{table}[h]
  \renewcommand{\arraystretch}{1.2} \centering %\small
  \caption{Irreducible reflection pairs, rank two, not $(\SA(G), G)$}
  \begin{tabular} {>{$}r<{$} @{\hspace{2em}} >{$}c<{$} @{\hspace{2em}}
      >{$}l<{$} @{\hspace{3em}} l}
    \toprule \addlinespace
     {\SA} &  C& G&Notes \\
    \addlinespace \toprule \addlinespace%
    \SA_2^0(r)&&G_{r,p,2}&\\
    \SA_2(r)&\multirow{-2}*{$G_{r,r,2}\varphi^p$}
                     &G_{r,r,2}&\\
    \addlinespace \midrule \addlinespace
              &G_{r,r,2}\varphi z_{2r}\inverse&G_{r,r,2}&\\
    \multirow{-2}*{$\SA_2^0(2r)$}%
              &G_{r,r,2}\varphi^pz_{2r}^{-p}&G_{r,p,2}&$p$ odd\\ 
    \addlinespace \midrule \addlinespace
              & G_{r,r,2}\varphi^{p+1}z_{2r}\inverse&G_{r,r,2}&$r/p$ odd\\
    \multirow{-2}*{$\SA_2(2r)$}%
              &G_{r,p,2}\varphi z_{2r}\inverse &G_{r,p,2}&\\

    \addlinespace \midrule \addlinespace
    \SA_2(4)& G_{4,2,2}\rho_3& G_6\\
    \SG_{5}&G_4z_3, G_{5}\rho_2, G_{5}\rho_4 &G_4, G_{14}, G_{10}\\
    \SG_{6}&G_{4,2,2}\rho_3, G_4z_4& G_{4,2,2}, G_4&\\
    \SG_{7} &G_{4}\bm{z_{12}}, G_{5} z_4, G_{6} z_3, G_{7}\rho_2,
              G_{7}\rho_4  &G_4, G_5, G_6, G_{15}, G_{10}\\
    \SG_{9} &G_{8} z_8, G_{12} z_8, G_{13}\bm{z_8}& G_8, G_{12}, G_{13}\\
    \SG_{10} &G_{5}\rho_4, G_{7}\rho_4, G_{8} z_3&G_5, G_7, G_8\\
    
    \addlinespace \midrule \addlinespace
    {}&G_8\bm{z_{24}}, G_9z_3, G_{10}z_8,G_{12}\bm{z_{24}}
                     &G_8, G_9, G_{10}, G_{12}\\
    \multirow{-2}*{$\SG_{11}$} &G_{13}\bm{z_{24}}, G_{14} z_8,
                                 G_{15}\bm{z_{8}}& G_{13},G_{14},G_{15}\\
    \addlinespace \midrule \addlinespace
    \SG_{13} &G_{12} z_4&G_{12}\\
    \SG_{14} &G_{5}\rho_2, G_{15} z_3&G_5, G_{12}\\
    \SG_{15}&G_{7}\rho_2, G_{12}\bm{z_{12}}, G_{13} z_3, G_{14} z_4
                     &G_7, G_{12}, G_{13}, G_{14}\\
    \SG_{17} &G_{16} z_4, G_{22} z_5& G_{16}, G_{22} \\
    \SG_{18} &G_{16} z_3, G_{20} z_5 & G_{16}, G_{20}\\
    \addlinespace \midrule \addlinespace
    {}& G_{16}\bm{z_{12}}, G_{17} z_3, G_{18} z_4&G_{16},G_{17},G_{18}\\
    \multirow{-2}*{$\SG_{19}$}%
              &G_{20} \bm{z_{20}}, G_{21} z_5,
                G_{22}\bm{z_{15}}&G_{20}, G_{21}, G_{22}\\
    \addlinespace \midrule \addlinespace
    \SG_{21} &G_{20} z_4, G_{22} z_3, G_{19} z_5& G_{20}, G_{22}, G_{19}\\

    \addlinespace \bottomrule \addlinespace
  \end{tabular}
  \label{tab:rk2}
\end{table}
%%%%%%%%%%%%%%%%%%%%%%%%%%%%%%%%%%%%%%%%%%%%%%%%%%%%%%%%%%%%%%%%%%%%%% 

%%%%%%%%%%%%%%%%%%%%%%%%%%%%%%%%%%%%%%%%%%%%%%%%%%%%%%%%%%%%%%%%%%%%%%
%%%%%%%%%%%%% Acknowledgments
%%%%%%%%%%%%%%%%%%%%%%%%%%%%%%%%%%%%%%%%%%%%%%%%%%%%%%%%%%%%%%%%%%%%%%
\subsection*{Acknowledgments}
The research of this work was supported by the DFG (Grant \#RO 1072/19-1 to
G.~R\"ohrle). J.M.~Douglass would like to acknowledge that some of this
material is based upon work supported by, and while serving at, the National
Science Foundation. Any opinion, findings, and conclusions or recommendations
expressed in this material are those of the authors and do not necessarily
reflect the views of the National Science Foundation.

We are grateful to the referee for valuable suggestions that led to
significant improvements in the paper.

%%%%%%%%%%%%%%%%%%%%%%%%%%%%%%%%%%%%%%%%%%%%%%%%%%%%%%%%%%%%%%%%%%%%%%
%%%%%%%%%%%%% bibliography
%%%%%%%%%%%%%%%%%%%%%%%%%%%%%%%%%%%%%%%%%%%%%%%%%%%%%%%%%%%%%%%%%%%%%%

%\bibliography{matts}
\bibliographystyle{plain}

%%%%%%%%%%%%%%%%%%%%%%%%%%%%%%%%%%%%%%%%%%%%%%%%%%%%%%%%%%%%%%%%%%%%%%
%%%%%%%%%%%%%%%%%%%%%%%%%%%%%%%%%%%%%%%%%%%%%%%%%%%%%%%%%%%%%%%%%%%%%%

\end{document}